\numberwithin{equation}{section}
\newcommand{\R}{\mathbb{R}}
\newcommand{\E}{\mathbb{E}}
\newcommand{\N}{\mathbb{N}}
\newcommand{\Z}{\mathbb{Z}}
\renewcommand{\S}{\mathbb{S}}
\newcommand{\cC}{\mathcal{C}}
\newcommand{\cA}{\mathcal{A}}
\newcommand{\cB}{\mathcal B}
\newcommand{\fC}{\mathfrak{C}}
\newcommand{\cE}{\mathcal{E}}
\newcommand{\cF}{\mathcal F}
\newcommand{\cM}{\mathcal{M}}
\newcommand{\cN}{\mathcal{N}}
\newcommand{\cD}{\mathcal{D}}
\newcommand{\cS}{\mathcal{S}}
\renewcommand{\d}{\mathrm{d}}
\newcommand{\Rd}{\R^{d}}
\renewcommand{\P}{\mathbb{P}}
\newcommand{\ind}[1]{\mathbbm{1}_{\{ #1\}}}
\newcommand{\indset}[1]{\mathbbm{1}_{#1}}
\newcommand{\egaldistr}{\overset{(d)}{=}}
\newtheorem{maintheorem}{Theorem}
\newtheorem{maincoro}[maintheorem]{Corollary}
\newtheorem{theorem}{Theorem}[section]
\newtheorem*{theorem*}{Theorem}
\newtheorem{lemma}[theorem]{Lemma}
\newtheorem{claim}[theorem]{Claim}
\newtheorem{proposition}[theorem]{Proposition}
\newtheorem{corollary}[theorem]{Corollary}
\theoremstyle{definition}{

\newtheorem{definition}[theorem]{Definition}
\newtheorem*{definition*}{Definition}

\newtheorem*{question*}{Question}
\newtheorem*{example*}{Example}
\newtheorem*{examples*}{Examples}
\newtheorem{remark}[theorem]{Remark}
\newtheorem*{remark*}{Remark}

}
\newcommand{\norm}[1]{\left\Vert #1 \right\Vert}
\renewcommand{\bar}[1]{\overline{#1}}
\newcommand{\given}{\;\big|\;}
\newcommand\abs[1]{\left|#1\right|}
\newcommand{\ep}{\epsilon}
\newcommand{\cR}{\mathcal{R}}
\newcommand{\Y}{\mathbf{Y}}
\newcommand{\B}{\mathbf{B}}
\newcommand{\bA}{\hat{A}}
\newcommand{\bZ}{\tilde{Z}}
\newcommand{\bB}{\bar{\mathbf{B}}}
\newcommand{\Pb}{\P^{(b)}}
\newcommand{\exit}{\mathrm{exit}}
\newcommand{\fc}{\mathfrak{c}}
\newcommand{\Ebump}{E_{\xi}^{\Bumpeq}}
\newcommand{\Id}{\mathrm{Id}}
\newcommand{\h}{\mathfrak{h}_{\mathcal{C}}^{\epsilon}}
\newcommand{\hcirc}{\mathfrak{h}_{\mathcal{C}}^{\epsilon \circ}}
\newcommand{\hs}{\bar{h}^{\epsilon}}
\newcommand{\hsb}{\bar{h}^{\epsilon \circ}}
\renewcommand{\land}{\mathrm{land}}
\newcommand{\cand}{\mathrm{cand}}
\newcommand{\good}{\mathrm{good}}
\crefname{claim}{Claim}{Claims}
\title{The shape of the front of multidimensional branching Brownian motion}
\author[Y. H. Kim]{Yujin H. Kim}
\address{Y.\ H.\ Kim\hfill\break
Courant Institute\\ New York University\\
251 Mercer Street\\ New York, NY 10012, USA.}
\email{yujin.kim@courant.nyu.edu}
\author[O. Zeitouni]{Ofer Zeitouni}
\address{O.\ Zeitouni\hfill\break
Department of Mathematics\\
Weizmann Institute of Science\\
POB 26,
Rehovot 76100, Israel\\
and
Courant Institute\\
New York University.}
\email{ofer.zeitouni@weizmann.ac.il}
\begin{document}

\begin{abstract}
We study the shape of the outer envelope of a branching Brownian motion (BBM) in $\mathbb{R}^d$, $d\geq 2$. We focus on the extremal particles: those whose norm is within $O(1)$ of the maximal norm amongst the particles alive at time $t$. Our main result is a scaling limit, with exponent $3/2$, for the outer-envelope of the BBM around each extremal particle (the \emph{front}); the scaling limit is a continuous random surface given explicitly in terms of a Bessel(3) process. Towards this end, we introduce a point process that captures the full landscape around each extremal particle and show convergence in distribution to an explicit point process. This complements the global description of the extremal process given in Berestycki et.\ al.\ (Ann.\ Probab.\ \textbf{52} (2024), no.\ 3, 955--982), where the local behavior at directions transversal to the radial component of the extremal particles is not addressed.
\end{abstract}

% \subjclass{60D05, 60G70, 60J80, 60J60, 60G55}

{\mbox{}
\maketitle
}
\vspace{-.5cm}

\section{Introduction}
Fix the dimension $d\geq 2$. 
We  study standard, binary branching Brownian motion (henceforth, ``BBM'') in $\R^d$. This is a stochastic branching particle system, defined as follows. Begin with a single particle at the origin $0 \in \R^d$. 
This particle moves as a standard Brownian 
motion in $\R^d$ and splits into two particles after an amount of time distributed as an independent exponential random variable 
of rate $1$. Each of these particles then continues independently via the same process.
Let $\cN_t$ denote the set of particles alive at time $t$, and for each $u \in \cN_t$, let $\B_t(u)  \in \R^d$ denote the position of $u$ at time $t$. For $u \in \cN_t$ and $s \in [0, t]$, let $\B_s(u) \in \R^d$ denote the position of the unique ancestor of $u$ alive at time $s$.

This paper addresses the following question. Observe a BBM in dimension $d$ at some large time (see Figure~\ref{fig:extremal-cluster}, top-left panel, 
 for a fixed-time $d=2$ simulation). Draw the outer envelope of the process in some way--- this is a
$(d-1)$-dimensional random surface. What can we say about it?

In \cite{Biggins}, Biggins showed that the convex hull of the BBM at time $t$, normalized by $\sqrt2t$, converges to the unit sphere.
Moreover, it is not hard to see that for large times $t$, the BBM actually
fills
a $d$-dimensional ball of some growing radius $f(t)$ in the following sense: if we replace each particle with a 
ball of radius, say, $1$, then our particle system will fill up a ball centered at the origin of some radius $f(t)$.  As explained in \cite[Section~1]{Mallein15}, 
G\"artner's theorem
in \cite{gartner} implies that 
$f(t) \approx \sqrt2 t - \frac{d+2}{2\sqrt2} \log t$.

On the other hand, there will be exceptional particles that travel much farther from the origin, creating long, thin spikes away from the aforementioned
ball. In \cite{Mallein15}, Mallein showed that the largest norm amongst particles in $\cN_t$ is tight around $m_t(d) := \sqrt2t + \frac{d-4}{2\sqrt2} \log t$; convergence of the largest norm after recentering by $m_t(d)$ was shown in \cite{KLZ21}. It is therefore natural to ask about the shape of
the frontier of the process formed by the \emph{extremal particles}: those particles at time $t$ whose norms are within $O(1)$ of the largest norm, or equivalently, of $m_t(d)$.
For instance, consider the particle $u^* \in \cN_t$ that travelled the furthest from the origin, as well as the cluster of particles 
of $O(1)$ distance from it.
Draw the outer-envelope of this cluster in some sensible way (eg., Figure~\ref{fig:extremal-cluster}, top-right panel).
This shape is roughly what we refer to as the front behind $u^*$.
Our main result, \cref{thm:extremal-front}, is a scaling limit of exponent $3/2$ for the front around \emph{any} extremal particle to an explicit random surface. This is stated precisely in the next subsection.

\begin{figure}
    \includegraphics[width = 0.38\textwidth]{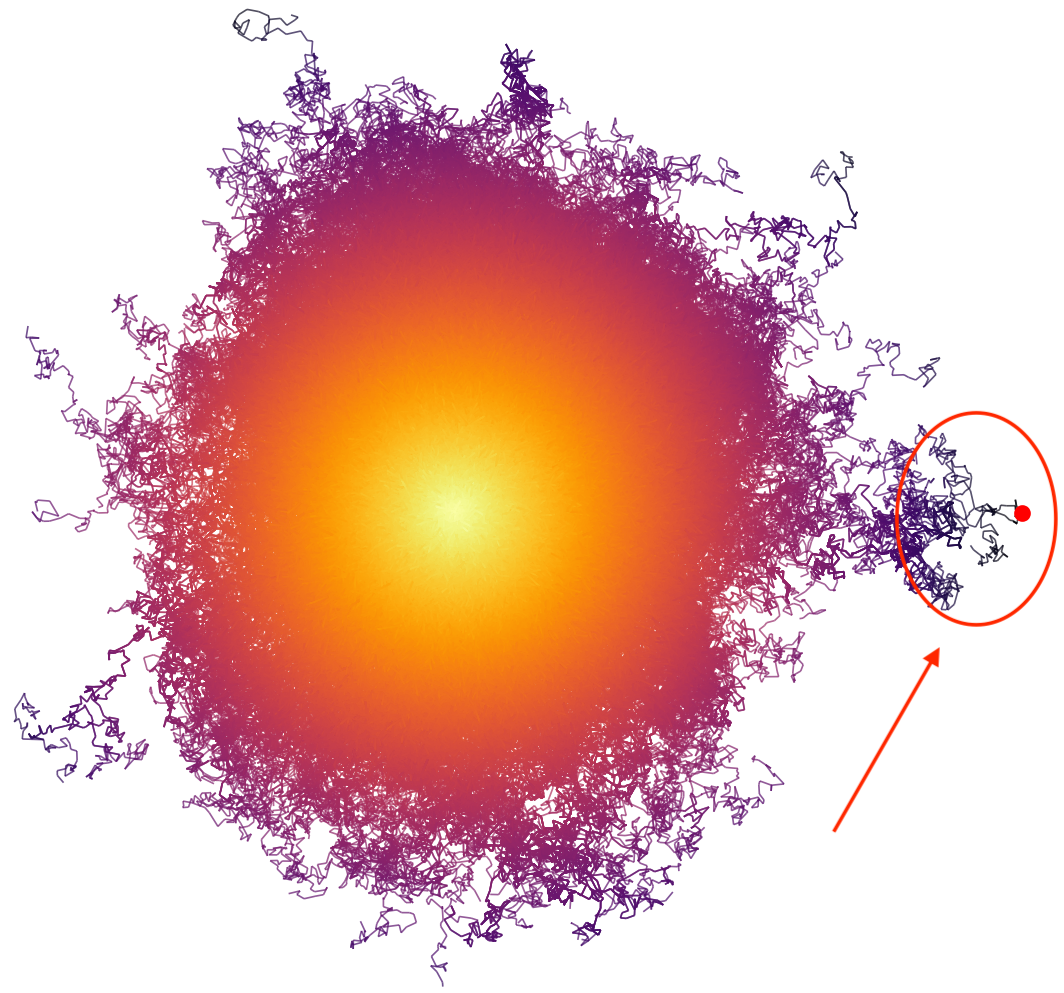}
    \hspace{5 em}
    \includegraphics[height = 0.25\textheight]{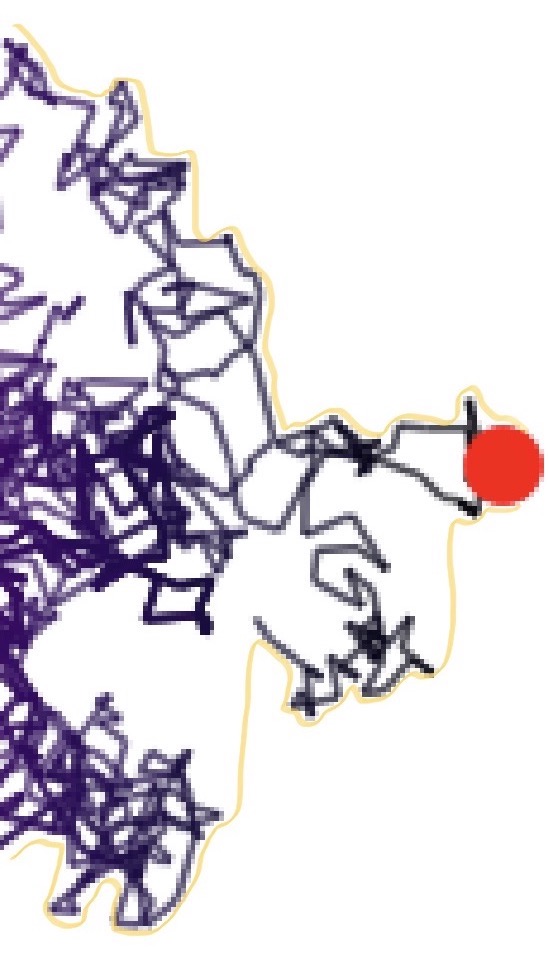}
\includegraphics[width=0.5\textwidth]{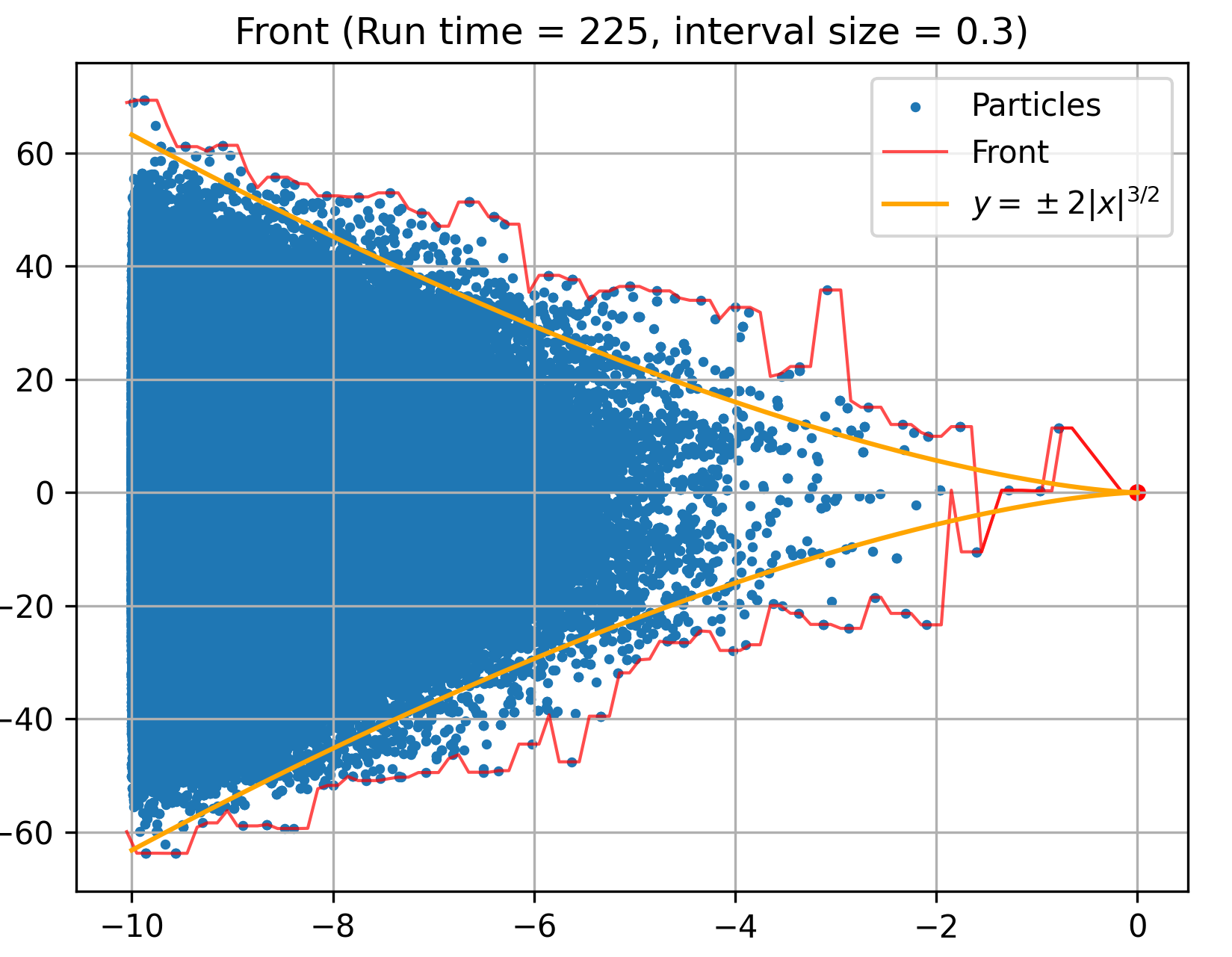}
\caption{(Top-left) A simulation of 2D BBM run until time $t= 8.5$, with trajectories of particles colored according to norm, $u^*$ marked in red, and  
particles closest to $u^*$
circled in red. 
(Top-right) Roughly, the front around $u^*$ in the top-left simulation.
(Bottom) A  simulation of the front run until $t=225$. 
In blue, the particles constituting the point process $\cC_t$.
In solid orange, the deterministic curves $y=\pm 2|x|^{3/2}$.
In red, the front $\mathfrak{h}_{\cC_t}^{\ep}$ 
defined with intervals of size $0.3$ (i.e., $p_i^{(1)} \in (-s, -s+0.3]$ in \cref{def:front}) for a better rendition.}
\label{fig:extremal-cluster}
\end{figure}

\subsection{The front of multidimensional BBM}
We now  move to define the front of the  BBM (\cref{def:front-bbm-extremal-cluster}).  For ease of exposition, we focus on the front around the maximal particle. We explain in \cref{rk:general-front} that the front around any extremal particle has the same scaling limit.

We begin with some notation. 
The line $\mathrm{span}\{\mathbf{e}_1\} \subset \R^d$ will play a particular role in what follows, and so for $u\in \cN_t$, we write
\[
    \B_t(u) = (X_t(u), \Y_t(u)) \in \R \times \R^{d-1}\,.
\]
For a vector $\mathbf v$ in $\R^n$, $n\in \N$, we  write $\arg (\mathbf v) := \mathbf v / \|\mathbf v\| \in \S^{n-1}$.
We also write $\theta_t(u)=\arg(\B_t(u))$.

\begin{definition}[Front of a point process] \label{def:front}
Fix $d\geq 2$, and
consider any simple point process $\mathcal{P}$ on $\R^d$ that has the origin as its ``right-most'' point; that is, the point process may be expressed as
$\mathcal{P} = \delta_0 + \sum_{i\in \N} \delta_{p_i}$, where 
$p_i := (p_i^{(1)}, p_i^{(2)}) \in \R_- \times \R^{d-1}$. 
For a fixed discretization parameter $\ep\in(0,1)$, we define \textit{the front of $\mathcal{P}$} to be the process
\begin{align*}
    \mathfrak{h}_{\mathcal{P}}^{\ep}(s,\theta) :=  \max \big\{ \|p_i^{(2)}\| : p_i \in \mathcal{P} \,,\, p_i^{(1)} \in (-s, -s+1] \,,\ |\theta\cdot\arg (p_i^{(2)})|\geq 1-\ep  \big\}\,,
\end{align*}
for $(s,\theta) \in [0,\infty) \times \S^{d-2}$,
where $\S^0 := \{\pm 1\}$ and the maximum element of the empty set is defined to be $0$.
\end{definition}
In a slight abuse of terminology, we will refer to both the front as well as its graph ``the front''.
 Note that the front is a $(d-1)$-dimensional random hypersurface, and for $d=2$, the front consists of two curves: $s \mapsto \mathfrak{h}_{\mathcal{P}}^{\ep}(s,+1)$ and $s \mapsto \mathfrak{h}_{\mathcal{P}}^{\ep}(s,-1)$.

\begin{definition}[Front of the BBM, extremal cluster]
\label{def:front-bbm-extremal-cluster}
Fix $d\geq 2$ and $\ep \in (0,1)$.
Let $\mathcal{R}_{\theta}: \R^d \to \R^d$ be the rotation map sending $\theta$ to $\mathbf{e}_1$ and acting as the identity on $\mathrm{span}\{\theta, \mathbf{e}_1\}^{\perp}$. Recall $u^*$ denotes the particle in $\cN_t$ of maximal norm. Define the  \textit{extremal cluster} at time $t$ to be the point process
\[
    \cC_t := \sum_{v\in \cN_t} \delta_{\cR_{\theta_t(u^*)}(\B_t(v) - \B_t(u^*))} \,.
\]
In words, $\cC_t$ describes the cluster of BBM particles around the furthest particle from the origin, rotated so that the $\mathbf{e}_1$ direction corresponds to the argument of this particle. We call $\mathfrak{h}_{\cC_t}^{\ep}$
 the \emph{front of the BBM} at time $t$, or simply, \emph{the front}.
\end{definition}

%%%FRONT TIKZ PICTURE
\begin{figure}
\begin{tikzpicture}[scale=1.6]
\draw[->] (3,0)--(-3.5,0);
\draw[->] (3,-3)--(3,3);
% \draw[->] (3,0)--(3,-3);
\draw[-] (-1,0)--(-1,3);
\draw[-] (-1,0)--(-1,-3);
\draw[-] (-1.2,0)--(-1.2,3);
\draw[-] (-1.2,0)--(-1.2,-3);
\node[right, thick, xshift=-58pt, yshift=5pt] {$\bullet$};
\node[right, thick, xshift=-57.5pt, yshift=10pt] {$\bullet$};
\node[right, thick, xshift=-56pt, yshift=15pt] {$\bullet$};
\node[right, thick, xshift=-57pt, yshift=25pt] {$\bullet$};
\node[right, thick, xshift=-55pt, yshift=20pt] {$\bullet$};
\node[right, thick, xshift=-58.5pt, yshift=35pt] {$\bullet$};
\node[right, thick, xshift=-55.5pt, yshift=48pt] {$\bullet$};
\node[right, thick, xshift=-57pt, yshift=68pt] {$\bullet$};
\node[right, thick, xshift=-57pt, yshift=-5.5pt] {$\bullet$};
\node[right, thick, xshift=-55.5pt, yshift=-11pt] {$\bullet$};
\node[right, thick, xshift=-57pt, yshift=-14pt] {$\bullet$};
\node[right, thick, xshift=-56pt, yshift=-27pt] {$\bullet$};
\node[right, thick, xshift=-55.5pt, yshift=-22pt] {$\bullet$};
\node[right, thick, xshift=-58pt, yshift=-35pt] {$\bullet$};
\node[right, thick, xshift=-55pt, yshift=-50pt] {$\bullet$};
\node[right, thick, xshift=-57.5pt, yshift=-69pt] {$\bullet$};
\node[right, thick, xshift=130pt, yshift=0pt] {$\bullet$};

\draw[<->] (-1,2)--(3,2);
\node[right, thick, xshift=10pt, yshift=100pt] {$t=1, x=-L+1$};
\draw[<->] (-1.2,2.7)--(3,2.7);
\node[right, thick, xshift=10pt, yshift=130pt] {$t=1, x=-L$};
%\draw[->] (-0.7,-0.2)--(-1,0);
%\draw[->] (-1.7,0.2)--(-1.2,0);
\draw[-,thick,red] (-1.1,1.5)--(-0.9,1.55)--(-0.7,1.4)--(-0.4,1)--(-0.1,0.8)--
(0.2,0.6)--(0.5,0.55)--(0.8,0.50)--(1.2,0.25)--(1.4,0.2)--(3,0);
\draw[-,thick,blue] (-1.1,-1.5)--(-0.9,-1.55)--(-0.7,-1.4)--(-0.4,-1)--(-0.1,-0.8)--
(0.2,-0.6)--(0.5,-0.55)--(0.8,-0.50)--(1.2,-0.25)--(1.4,-0.2)--(3,0);

%\node[right, thick, xshift=-47pt, yshift=-15pt] {$t=1, x=-L$};
%\node[right, thick, xshift=-127pt, yshift=15pt] {$t=1, x=-L-1$};
\node[right, thick, xshift=-160pt, yshift=10pt] {$t$};
\node[right, thick, xshift=135pt, yshift=140pt] {$y$};

\node[right, thick, xshift=-38pt, yshift=5pt,opacity=0.4] {$\bullet$};
\node[right, thick, xshift=-17.5pt, yshift=10pt,opacity=0.4] {$\bullet$};
\node[right, thick, xshift=-6pt, yshift=15pt,opacity=0.4] {$\bullet$};
\node[right, thick, xshift=7pt, yshift=25pt, opacity=0.4] {$\bullet$};
\node[right, thick, xshift=9pt, yshift=15pt,opacity=0.4] {$\bullet$};
\node[right, thick, xshift=17pt, yshift=10pt,opacity=0.4] {$\bullet$};
\node[right, thick, xshift=37pt, yshift=15pt,opacity=0.4] {$\bullet$};
\node[right, thick, xshift=47pt, yshift=8pt,opacity=0.4] {$\bullet$};
\node[right, thick, xshift=57pt, yshift=5pt,opacity=0.4] {$\bullet$};
\node[right, thick, xshift=77pt, yshift=3pt,opacity=0.4] {$\bullet$};
\node[right, thick, xshift=97pt, yshift=2pt,opacity=0.4] {$\bullet$};
\node[right, thick, xshift=-7pt, yshift=28pt,opacity=0.4] {$\bullet$};
\node[right, thick, xshift=25pt, yshift=20pt,opacity=0.4] {$\bullet$};
\node[right, thick, xshift=18.5pt, yshift=25pt,opacity=0.4] {$\bullet$};
\node[right, thick, xshift=-45.5pt, yshift=48pt,opacity=0.4] {$\bullet$};
\node[right, thick, xshift=-37pt, yshift=28pt,opacity=0.4] {$\bullet$};
\node[right, thick, xshift=-47pt, yshift=68pt,opacity=0.4] {$\bullet$};
\node[right, thick, xshift=-39pt, yshift=58pt,opacity=0.4] {$\bullet$};
\node[right, thick, xshift=-29pt, yshift=32pt,opacity=0.4] {$\bullet$};
\node[right, thick, xshift=-25pt, yshift=48pt,opacity=0.4] {$\bullet$};
\node[right, thick, xshift=-16pt, yshift=39pt,opacity=0.4] {$\bullet$};
\node[right, thick, xshift=19pt, yshift=5pt,opacity=0.4] {$\bullet$};

\node[right, thick, xshift=-38pt, yshift=-7pt,opacity=0.4] {$\bullet$};
\node[right, thick, xshift=-17.5pt, yshift=-8pt,opacity=0.4] {$\bullet$};
\node[right, thick, xshift=-6pt, yshift=-21pt,opacity=0.4] {$\bullet$};
\node[right, thick, xshift=8pt, yshift=-28pt, opacity=0.4] {$\bullet$};
\node[right, thick, xshift=10pt, yshift=-6pt,opacity=0.4] {$\bullet$};
\node[right, thick, xshift=19pt, yshift=-19pt,opacity=0.4] {$\bullet$};
\node[right, thick, xshift=38pt, yshift=-19pt,opacity=0.4] {$\bullet$};
\node[right, thick, xshift=49pt, yshift=-7pt,opacity=0.4] {$\bullet$};
\node[right, thick, xshift=58pt, yshift=-6pt,opacity=0.4] {$\bullet$};
\node[right, thick, xshift=76pt, yshift=4pt,opacity=0.4] {$\bullet$};
\node[right, thick, xshift=90pt, yshift=-1pt,opacity=0.4] {$\bullet$};
\node[right, thick, xshift=-9pt, yshift=-27pt,opacity=0.4] {$\bullet$};
\node[right, thick, xshift=26pt, yshift=-23pt,opacity=0.4] {$\bullet$};
\node[right, thick, xshift=-45pt, yshift=-49pt,opacity=0.4] {$\bullet$};
\node[right, thick, xshift=-39pt, yshift=-27pt,opacity=0.4] {$\bullet$};
\node[right, thick, xshift=-48pt, yshift=-67pt,opacity=0.4] {$\bullet$};
\node[right, thick, xshift=-37pt, yshift=-56pt,opacity=0.4] {$\bullet$};
\node[right, thick, xshift=-28pt, yshift=-33pt,opacity=0.4] {$\bullet$};
\node[right, thick, xshift=-24pt, yshift=-47pt,opacity=0.4] {$\bullet$};
\node[right, thick, xshift=-15pt, yshift=-37pt,opacity=0.4] {$\bullet$};
\node[right, thick, xshift=17pt, yshift=-5pt,opacity=0.4] {$\bullet$};

\draw[thick,dashed,variable=\t,domain=0:6,samples=50]
plot ({-\t+3},{0.2*\t^(3/2)});
\draw[thick,dashed,variable=\t,domain=0:6,samples=50]
plot ({-\t+3},{-0.2*\t^(3/2)});
\end{tikzpicture}
\caption{An artistic rendition of the extremal cluster and the front in $d=2$.
Drawn are only
the particles up to distance  $L$
(in the direction $\mathbf{e}_1$)
behind the
extremal particle (at the origin). The front for $\theta=+1$ is drawn in red,
for $\theta=-1$ in blue. The dashed lines correspond to $y=\pm x^{3/2}$.
The particles in the strip $x\in (-L,-L+1]$ used to define the front
at $t=1$ are shaded darker than other particles.
Note the vertical lines used to define the height of the front at $t=1$.}
\label{fig:front}
\end{figure}
See Figure~\ref{fig:extremal-cluster}, bottom panel, for a simulation of the front and Figure~\ref{fig:front} for a depiction of the front, both in dimension $2$.
Our main result is the scaling limit of the front, where we introduce the scaling parameter $L$ in the $s$-coordinate.

\begin{figure}
\centering
\includegraphics[width = 0.45\textwidth]{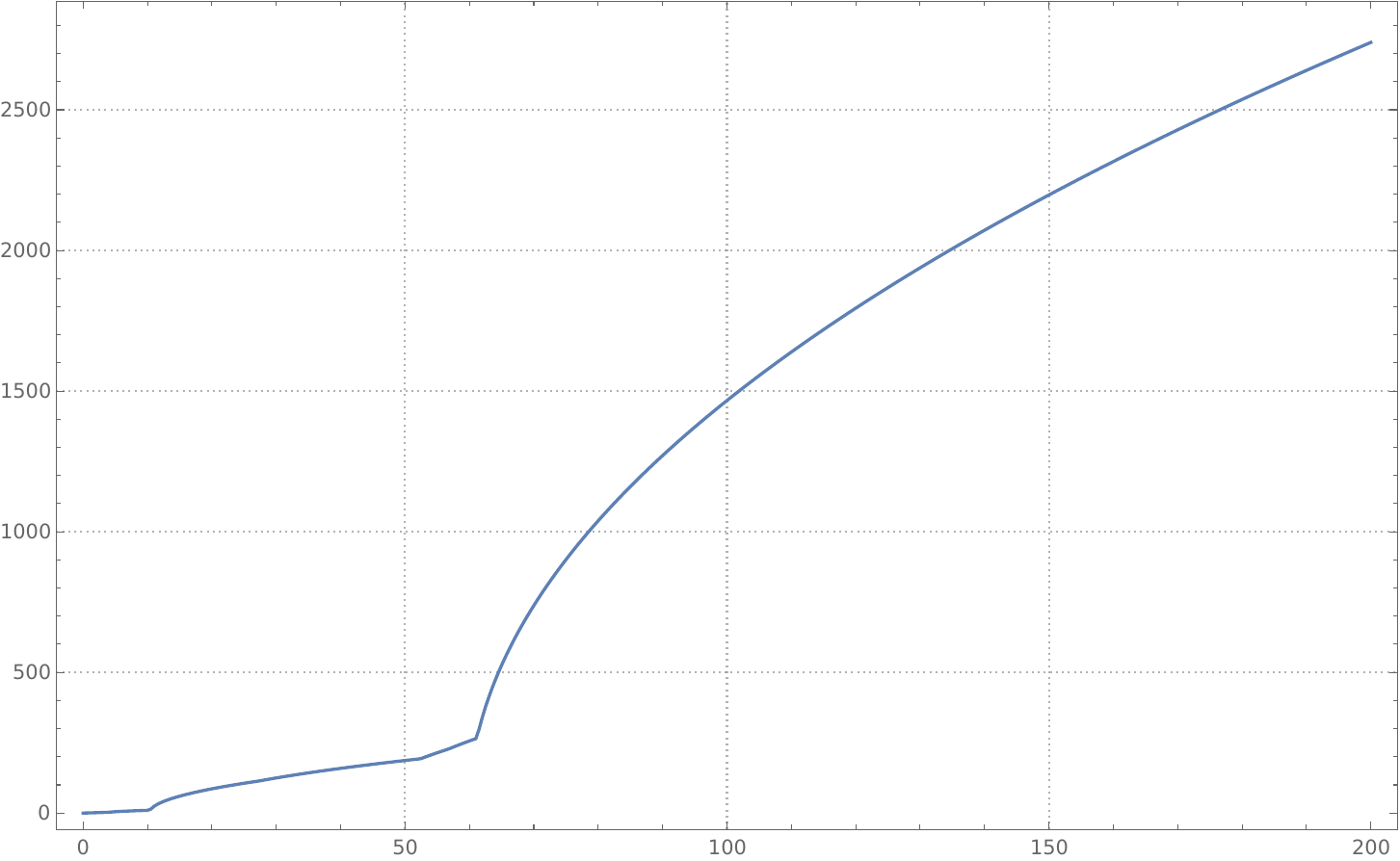}
\includegraphics[width = 0.45\textwidth]{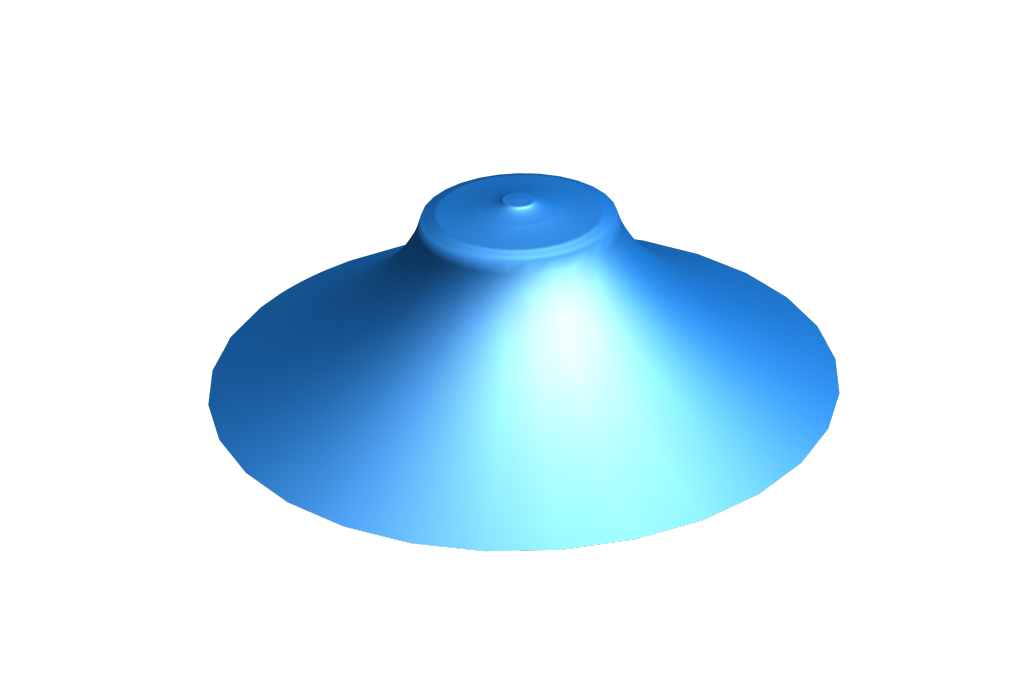}
\caption{(Left) A simulation of $\rho(s)$ for $s\in [0,200]$. (Right) The paraboloid formed by revolving $\rho(s)$ around an axis (the axis was not taken to be $\mathbf{e}_1$ for better resolution): this is a simulation of the scaling limit of the front $(\rho(s,\theta))_{s\in[0,200], \theta\in\S^{1}}$ of 3D BBM, rotated.}
\label{fig:rho-front-scaling-limit}
\end{figure}

\begin{maintheorem} \label{thm:extremal-front}
Fix $d\geq 2$ and $\ep \in (0,1)$. We have the following weak convergence in $D([0,\infty) \times \S^{d-2})$ (in the sense of finite-dimensional distributions and tightness in the Skorokhod space $D([0,T]\times \S^{d-2})$ equipped with the $\sup$ norm, for any fixed $T>0$): as first $t\to\infty$ then $L\to\infty$, 
\begin{align*}
  \big(  8^{-\frac{1}4}L^{-\frac{3}2} \mathfrak{h}_{\cC_t}^{\ep}(sL,\theta) \big)_{s \in [0,\infty),\theta \in \S^{d-2}} \Rightarrow \big( \rho(s,\theta) \big)_{s\in [0,\infty),\theta \in \S^{d-2}}\,,
\end{align*}
where 
\begin{equation}
  \label{eq-rho}
  \rho(s,\theta) := \rho(s) = \Big( \sup_{\sigma \geq 0} (\sigma s -\sigma R_\sigma) \Big)^{\frac12}\,, \text{ for all $s \geq 0$ and $\theta \in \S^{d-2}$,}
  \end{equation}
and $R_.$ denotes a Bessel(3) process started from $0$.
\end{maintheorem}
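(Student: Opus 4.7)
The plan is to execute an iterated limit: first take $t \to \infty$ to replace the random extremal cluster $\cC_t$ with its limiting point process $\cC_\infty$ (established elsewhere in the paper as the ``point process that captures the full landscape around each extremal particle''), then take $L \to \infty$ on the front functional evaluated at $\cC_\infty$. The inner limit is a continuous mapping argument applied to $\mathcal{P} \mapsto \mathfrak{h}_{\mathcal P}^\epsilon(\cdot,\cdot)$; since ``max over a strip'' is discontinuous in the vague topology, this step requires a priori control on the expected count of particles near the strip boundaries $p_i^{(1)} \in \{-sL,-sL+1\}$. The heart of the proof is the $L \to \infty$ analysis of the functional evaluated at $\cC_\infty$.

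The key structural input is a spine-plus-decoration description of $\cC_\infty$: a three-dimensional Bessel process $(R_\tau)_{\tau \geq 0}$ encoding the radial lag of the ancestor of the maximal particle behind the asymptotic line $\sqrt{2}\tau$, together with independent decoration BBMs branching off at Poisson times along the spine. After the rotation sending $\theta_t(u^*)$ to $\mathbf{e}_1$, each decoration at backward time $\tau$ contributes particles whose radial depth below the global max is approximately $R_\tau$ plus an internal depth (the particle's depth below the decoration's own max). The number of particles falling in the unit-width depth strip at depth $sL$ is therefore $N_\tau \approx \exp(\sqrt{2}(sL - R_\tau))$ when $R_\tau < sL$. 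Conditional on the spine, their transverse coordinates are well approximated by iid centered $(d-1)$-dimensional Gaussians of variance $\tau$ per coordinate; a common spine-shift of size only $\sqrt{\tau}$ is absorbed into lower-order terms by the much larger iid spread.

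Under the scaling $\tau = \sigma L^2$, Brownian scaling of Bessel(3) gives $R_{\sigma L^2} \egaldistr L R_\sigma$, and the Gaussian extreme-value asymptotic applied to the $N_\tau$ iid transverse coordinates yields a maximum of approximately
\[
  \sqrt{2\tau \log N_\tau} \;\approx\; \sqrt{2\sigma L^2 \cdot \sqrt{2}\, L (s - R_\sigma)} \;=\; 8^{1/4}\, L^{3/2} \sqrt{\sigma(s - R_\sigma)}.
\]
Taking the sup over $\sigma \geq 0$ (i.e., optimizing over the branching scale) yields the claimed scaling $8^{1/4} L^{3/2} \rho(s)$. The independence of the limit from $\theta \in \S^{d-2}$ and its insensitivity to $\epsilon$ both follow from isotropy of the transverse Brownian motion: the cone constraint $|\theta \cdot \arg(p_i^{(2)})| \geq 1-\epsilon$ only multiplies $N_\tau$ by a dimensional constant depending on $\epsilon$, contributing sub-leading $\log\log$ corrections.

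The main obstacles are: (i) making the continuous mapping rigorous for the discontinuous max-over-strip functional, via tail bounds on the particle count near the strip boundaries; (ii) justifying the Gaussian extreme-value approximation, both within a single decoration (via BBM concentration estimates) and across nearby branching scales (to ensure the sup over $\sigma$ is realized by approximately decoupled contributions); and (iii) establishing tightness in $D([0,T]\times \S^{d-2})$ under the $\sup$ norm, which decomposes into Hölder regularity in $s$ for the Legendre-type limit $\rho$ pulled back through the scaling, and vanishing $\theta$-dependence in the limit via isotropy together with a finite cone-covering of $\S^{d-2}$. The second of these, controlling how the optimizing scale $\sigma^\star(s)$ varies in $s$ along the single underlying Bessel(3) trajectory $R$, is where the tightness and f.d.d.\ pieces most delicately interact.
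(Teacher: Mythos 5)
Your overall architecture matches the paper's: first send $t\to\infty$ to replace $\cC_t$ by the limiting cluster $\cC$ (our \cref{thm:extremal-landscape} and \cref{cor:extremal-cluster}), then analyze the front of $\cC$ via its spine-plus-Poisson-decorations description, count particles landing in the strip at depth $sL$ by a first-moment/many-to-one computation, and use Brownian scaling of the Bessel(3) spine to arrive at the variational formula \eqref{eq-rho}. Your back-of-envelope computation $\sqrt{2\tau\log N_\tau}$ with $\tau=\sigma L^2$ and $\log N_\tau\approx\sqrt2\,L(s-R_\sigma)$ does produce the correct constant $8^{1/4}$, and your treatment of the spine's transverse shift and of the cone constraint is consistent with what we do.

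The genuine gap is in the mechanism you propose for the lower bound on the maximal transverse displacement. You invoke ``the Gaussian extreme-value asymptotic applied to the $N_\tau$ iid transverse coordinates,'' but the transverse coordinates of the particles of a single decoration BBM are \emph{not} independent: conditionally on the genealogy they form a $(d-1)$-dimensional Gaussian field whose covariances are the branching times, and the subset of particles selected by the radial strip condition is itself determined by the (independent) $\mathbf{e}_1$-marginal. The union bound gives the matching upper bound for free, but the lower bound $\max\gtrsim\sqrt{2\tau\log N_\tau}$ requires showing that order $N_\tau$ of the selected particles are genealogically well-separated, i.e.\ a truncated second-moment (REM-approximation) argument for a branching random walk restricted to an exponentially rare radial event. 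You flag this as obstacle (ii) but offer only ``BBM concentration estimates,'' which is not a mechanism; as stated the step fails. The paper deliberately avoids this entirely: the lower bound is obtained from G\"artner's theorem on the $d$-dimensional F-KPP front (\cref{prop:gartners-result,cor:gartner}), which guarantees that a single decoration born at time $\tau$ places a particle within distance $1/4$ of \emph{any} prescribed point at radius $m^G_\tau(d)-r$ with probability bounded below by a constant; multiplying over the $\asymp L^{2-\iota}$ Poisson branching times in a window around the optimizing scale $\sigma^*L^2$ then yields a superexponentially small failure probability, strong enough to union-bound over a mesh in $(s,\theta)$ and settle your obstacle (iii) at the same time. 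The Pythagorean identity then converts ``radius $\approx\sqrt2\tau$ at prescribed depth $sL-\hat A_\tau$'' into the transverse displacement $\approx(2\sqrt2\,\tau(sL-\hat A_\tau))^{1/2}$, with the matching upper bound coming from the global radial barrier $\|\B_{\tau_i}(v)\|\le\sqrt2\tau_i+(\log L)^2$ (\cref{lem:total-upper-barrier}) rather than from any extreme-value analysis. If you want to salvage your route you must either carry out the second-moment argument for the correlated field, or substitute a front-location input of G\"artner type as we do.
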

For clarity, the scaling limit is the 
surface 
$\rho(s,\theta)$ formed by revolving $\rho(s)$ around the $\mathbf{e}_1$-axis, see Figure~\ref{fig:rho-front-scaling-limit} for simulations. 
We prove \cref{thm:extremal-front} in \cref{sec:pf-main-thm}. Before proceeding, some remarks on \cref{thm:extremal-front} are in order.

\begin{remark}
Our proof will show that $\ep$ can be taken to $0$ as $L \to\infty$, so long as $\ep$ does not decay faster than $L^{-1/2}$. 
Similarly, our choice in \cref{def:front} of the interval $(-s, -s+1]$ having size $1$ was  arbitrary for our scaling limit: one could consider intervals of size $\delta$ tending to $0$ as $L \to\infty$ (again, with the condition that $\delta$ does not decay too fast).
\end{remark}

\begin{remark} \label{rk:properties-rho}
Here we make some observations on the limiting process $\rho$.
First, observe that $\rho^2(s)$ is a convex function, as it is a Fenchel-Legendre transform. Second, since  $R_0 = 0$ and $R_{\sigma}\to \infty$ almost surely, we have that, for any $s\geq 0$, the supremum defining $\rho(s)$ is achieved and $\rho(s)$ is positive almost surely.
Lastly, note that for $s>0$, $\rho(s)$ can be transformed as follows:
\[
\rho(s) = \Big(\sup_{\sigma\geq 0} (\sigma s- \sigma R_{\sigma})
\Big)^{\frac12} = \Big( \sup_{\sigma \geq 0} (\sigma s^3 - \sigma s^2 R_{\sigma s^2}) \Big)^{\frac12} = s^{\frac32} \Big( \sup_{\sigma \geq 0} (\sigma  - \sigma \tfrac{ R_{\sigma s^2}}{s}) \Big)^{\frac12}\,.
\]
Thus, $\rho(s)$ equals $s^{3/2} \rho(1)$ in distribution for each fixed $s>0$, by  Brownian scaling invariance of the Bessel process.
\end{remark}

\begin{remark}
  The exponent $3/2$ also appears in \cite{BZ18}
  as a one-sided bound for the
Brunet-Derrida $N$-BBM evolution. While that is a different model, and the
scaling is different, it is nevertheless striking that the same exponent 
occurs there. We also note that, some time after the posting of this paper, a similar-in-spirit variational problem was found in the fluctuations of $\cD[-v,0]$, where $\cD$ denotes the decoration measure in the extremal point process of 1D BBM \cite[Theorem~1.2]{HLW24}. Both here and there, the variational problem comes from optimizing over the branching times on the backwards trajectory of the spine particle: see the proof outline in \cref{sub:outline_of_the_proofs} for more.
\end{remark}

Our study of the front proceeds via an analysis of the distribution of the extreme values of multidimensional BBM. In particular, we obtain in \cref{thm:extremal-landscape} the distributional limit of the extremal cluster $\cC_t$, as $t\to\infty$, in a form that is explicit and amenable to analysis. It is via this analysis that we study the front, see \cref{thm:extremal-front-C}.
The distribution of the extremal particles in $d=1$
has been well-understood since the seminal
papers \cite{ABBS12,ABK13}, see also \cite{SBM21} and \cite{BKLMZ21}.
Considering multiple dimensions clearly adds a new level
of complexity due to important spatial considerations that complicate
and enrich
the extreme value theory compared to $d=1$ (though the $d=1$ setting has itself drawn much attention, in part due to its  role in the study of reaction-diffusion equations \cite{Julien,Bovier,Ryzhik}
and the extrema of log-correlated fields \cite{Z16,AR17,Bis20,BK22}).
In the following subsection, we discuss the development of the extreme value theory of multidimensional BBM up to the current article, focusing on connections with the study of the front.

\subsection{Extreme value theory of multidimensional BBM}
For $v \in \cN_t$ and $s\in [0,t]$, we denote the norm process $R_s(v) := \norm{\B_s(v)}$.

The first natural 
question in regards to the front is, ``where can we find the front?''; that is, ``at what distance from the origin is the frontier of the process located?'' 
This is equivalent to understanding the maximum norm amongst particles alive at time $t$, as $t$ grows to infinity.
As mentioned previously, in \cite{KLZ21}, the authors and Lubetzky proved that $R_t^* - m_t(d)$ 
converges in distribution to a randomly-shifted Gumbel random variable, where 
\[
    m_t(d) := \sqrt2t+ \tfrac{d-4}{2\sqrt2} \log t\,. 
\]
(In dimension $d=1$, this result has a long history, 
going back to \cite{bramson83,LS87} for the right-most particle, and \cite{SBM21}
for the joint law of the left-most and right-most particles.) 
We mention that the result was upgraded in \cite{BKLMZ21} to hold for BBM started from any initial configuration, and, as in the 1D case, the random shift was shown to arise from the early history of the BBM in the sense of Lalley-Sellke \cite{LS87}; see Corollary 4.3 and Proposition~1.3 there. Further, that work identified the random shift as the total mass of the almost-sure limit of a martingale sequence of measures. This limiting measure, denoted by $D_{\infty}(\theta)\sigma(\d\theta)$ for $\sigma(\d\theta)$ the Lebesgue measure on $\S^{d-1}$, was constructed in \cite{SBM21} as a higher-dimensional analogue of the so-called ``derivative martingale''.

As the front is built from the extremal particles, 
the next natural question is, ``how are the extremal particles  distributed?''
In \cite{BKLMZ21}, it was shown that the  collection of extremal particles, viewed as a point process on $\R\times \S^{d-1}$ (polar coordinates), converges weakly in the topology of vague convergence to a decorated Poisson point process of random intensity. Namely, we have convergence of the so-called  \emph{extremal point process}:
\begin{align} \label{eqn:epp-convergence}
\cE_t := \sum_{u \in \cN_t} \delta_{(R_t(u) - m_t(d), \theta_t(u))} \to \cE_{\infty}\,,
\end{align}
where $\cE_{\infty}$ takes the following description. Recall $D_{\infty}(\theta)\sigma(\d\theta)$ from above, and let $\{\cD^{(i)}\}_{i\in \N}$ be i.i.d.\ copies of the decoration point process $\cD$ for the 1D BBM (a description of $\cD$ is given in \cref{rk:one-dimensional-decoration}). 
Let $\{(\xi_i,\theta_i)\}_{i\in\N}$ denote the atoms of a Poisson point process on $\R\times \S^{d-1}$ with intensity 
\begin{align}
    \mathfrak{C}_d e^{-\sqrt2x}\d x \otimes D_{\infty}(\theta) \sigma(\d\theta)\,,
    \label{eqn:epp-intensity}
\end{align}
where $\mathfrak{C}_d>0$ is some constant. Then 
\begin{align}\label{def:cE-infinity}
    \cE_{\infty} = \sum_{i=1}^{\infty} \sum_{r \in \cD^{(i)}} \delta_{(\xi_i+r, \theta_i)}\,.
\end{align}
In $d=1$, the 
extremal point process $\sum_{u\in \cN_t} \delta_{B_t(u) - m_t(1)}$ was shown to converge to a decorated Poisson point process of random intensity in two independent and simultaneous works, \cite{ABBS12} and \cite{ABK13}.

Similar to the one-dimensional case, the structure of $\cE_{\infty}$ comes from an interesting genealogical description of the extremal particles. 
In \cite{KLZ21}, as a consequence of a modified second moment method, one can infer with high probability that all extremal particles branched from one another either before time $O(1)$ or after time $t-O(1)$.
In words, the extremal particles are related to one another as ``distant ancestors'' or ``close relatives''.
The Poisson points in $\cE_{\infty}$ may then be understood as the limiting locations  
of the extremal particles in $\cN_t$ of maximal norm amongst their close relatives (the ``clan leaders''), 
while each decoration corresponds to the point process of close relatives behind each clan leader. See \cref{def:E-hat} for a precise formulation of this statement.

However, notice that the decorations only appear in the norm component of $\cE_{\infty}$ --- there are no decorations in the angular component. This can be understood as a consequence of the fact that the angles in $\cE_t$ are measured with respect to the origin, and so the angular differences amongst the members of each clan are of order $t^{-1}$ (as their radial spread is $O(1)$), which of course vanishes in the limit. In particular, $\cE_{\infty}$ does not capture any \textit{local} information in the direction transversal to the radial component
of the extremal particles--- for instance, the limiting distribution of $\cC_t$ cannot be deduced--- and therefore
does not carry the information needed
to understand the front around $u^*$ or any other extremal point. 
This is addressed by our second main result, \cref{thm:extremal-landscape} below.

\subsection{The extremal landscape of multidimensional BBM} \label{subsec:extremal-landscape}
We introduce in \cref{def:extremal-landscape} a point process $\cE_{t,\ell}$ that recovers the local landscape around \textit{every} extremal point. For this reason, we call $\cE_{t,\ell}$ the \emph{extremal landscape}. The second main result of this article, \cref{thm:extremal-landscape}, gives the distributional limit of the extremal landscape. 

Before proceeding, we introduce the following.

\subsection*{Some genealogical notation}
For any two particles $u_1, u_2 \in \cN_t$, let $u_1\wedge u_2$ denote the last time at which the most recent common ancestor of $u_1$ and $u_2$ was alive (i.e., the time at which $u_1$ and $u_2$ split).
For $0<\ell<t$ and $u \in \cN_t$, define the \emph{$\ell$-clan} of $u$ to be 
$[u]_{\ell}:= \{v \in \cN_t: u \wedge v \geq t-\ell\}$. 
For $s \leq t$ and $u \in \cN_s$, let $\cN_t(u)$ denote the subset of $\cN_t$ formed by the descendants of $u$, and let $u_t^*$ denote the particle in $\cN_t(u)$ of maximal norm.
Denote the set of \emph{$\ell$-clan leaders} by
\begin{equation}
  \label{eq-lclanleader}
    \Gamma_{t,\ell} := \big\{u \in \cN_t:  R_t(u) = \max_{v \in [u]_{\ell}} R_t(v) \big\}= \{u_t^* : u \in \cN_{t-\ell} \}\,.
  \end{equation}

Recall the rotation map $\cR_{\theta}$ from \cref{def:front-bbm-extremal-cluster}.
\begin{definition}\label{def:extremal-landscape}
The \textit{extremal landscape} at time $t$ is the point process
\begin{align} 
    \cE_{t,\ell}^{\land} := \sum_{u\in \Gamma_{t,\ell}} \delta\Big(R_t(u) - m_t(d), \theta_t(u), \sum_{v \in \cN_t} \delta_{\cR_{\theta_t(u)}(\B_t(v) - \B_t(u))} \Big)\,.
\end{align}
\end{definition}
This point process is related in spirit to
 the extremal process of the two-dimensional discrete Gaussian free
field studied in \cite{BL18}. 
Now, let $\mathbb{M}$ denote the space of Radon measures on $\R^d$, and recall $\mathfrak{C}_d$ and $D_{\infty}(\theta)\sigma(\d\theta)$ from \cref{eqn:epp-intensity}.
\begin{maintheorem}\label{thm:extremal-landscape}
There exists a point process law $\nu$ on $\mathbb{M}$, described in Section \ref{subsec:extremallimitcluster},
such that the extremal landscape $\cE_{t,\ell}^{\land}$ converges weakly, as $t\to\infty$ then $\ell\to\infty$, to 
\[
    \cE_{\infty}^{\land}:= \mathrm{PPP}(\mathfrak{C}_d e^{-\sqrt2x} \d x \otimes D_{\infty}(\theta) \sigma (\d \theta) \otimes \nu)
 \]
 in the space of point processes on $\R \times \S^{d-1} \times \mathbb{M}$ endowed with the topology of vague convergence.
\end{maintheorem}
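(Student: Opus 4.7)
My plan is to upgrade the convergence $\cE_t \Rightarrow \cE_\infty$ from \cref{eqn:epp-convergence} to the enriched point process $\cE_{t,\ell}^{\land}$ by attaching to each atom a local decoration in $\mathbb{M}$ recording the rotated cluster around each clan leader. The overall strategy is a Laplace functional computation combined with the branching-property decomposition of the BBM at time $t-\ell$, using the genealogical structure of extremal particles established in \cite{KLZ21, BKLMZ21}.

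The first key observation is localization: for each $u \in \Gamma_{t,\ell}$, the decoration $\sum_{v \in \cN_t} \delta_{\cR_{\theta_t(u)}(\B_t(v) - \B_t(u))}$ agrees, in the vague topology on compact subsets of $\R^d$, with its restriction to $v \in [u]_\ell$ with probability tending to $1$ as $t, \ell \to \infty$. This follows from the dichotomy in \cite{KLZ21, BKLMZ21}: any $v \in \cN_t$ not in $[u]_\ell$ satisfies $u \wedge v < t - \ell$, in which case either $u \wedge v \leq O(1)$, making $\|\B_t(v) - \B_t(u)\|$ macroscopic, or the branching time lies in an intermediate regime of negligible contribution. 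Consequently, by the branching property, conditionally on $\cN_{t-\ell}$ the truncated decorations attached to different clan leaders are independent, since they come from disjoint sub-BBMs of duration $\ell$ rooted at distinct ancestors in $\cN_{t-\ell}$.

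Next I would construct the decoration law $\nu$. For a fixed ancestor $w \in \cN_{t-\ell}$ whose descendants contribute an extremal particle $u = w_t^*$, the conditional law of the rotated cluster around $u$ is to be identified, as in the one-dimensional decoration analysis of \cite{ABBS12,ABK13,BKLMZ21}, via a Girsanov change of measure singling out a spine (the trajectory of $u$). Decorations along the spine are generated by independent branchings, each seeding a sub-BBM whose local contribution is captured after rotation and translation. A new ingredient compared to $d=1$ is that the transversal displacements of the branches (free Brownian motion in $\mathbf{e}_1^\perp$, of order $\sqrt{s}$ at lookback time $s$) interact with the radial conditioning (a Bessel(3)-type profile on the spine) to generate a nontrivial $(d-1)$-dimensional transversal distribution. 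Taking $\ell \to \infty$ after $t \to \infty$ defines $\nu$ as a law on $\mathbb{M}$. To conclude the PPP convergence, I would evaluate the Laplace functional $\E[\exp(-\langle F, \cE_{t,\ell}^{\land}\rangle)]$ for compactly supported nonnegative $F$: by localization and the branching decomposition, it factorizes over $w \in \cN_{t-\ell}$, and a first moment calculation (combined with the derivative-martingale convergence of \cite{BKLMZ21}) identifies the limiting intensity $\mathfrak{C}_d e^{-\sqrt{2}x}\,\d x \otimes D_\infty(\theta)\sigma(\d\theta) \otimes \nu$. The modified second moment method of \cite{KLZ21} then ensures that distinct ancestors contribute in a Poissonian fashion, producing the claimed PPP.

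The main obstacle is the rigorous construction of $\nu$. Unlike the one-dimensional decoration $\cD$, which lives on $\R$, $\nu$ is a distribution on Radon measures of $\R^d$ encoding nontrivial transversal structure: the extent of a clan in $\mathbf{e}_1^\perp$ at radial lookback $L$ is expected to scale like $L^{3/2}$, matching \cref{thm:extremal-front}. Establishing tightness on compact sets and identifying the limit requires simultaneous control of the Bessel(3)-like radial conditioning of the spine and the free Brownian transversal spread, uniformly in $\ell$. A further delicate point is justifying the iterated limit: one must show that the $t \to \infty$ limit with fixed $\ell$ yields a decorated PPP with $\ell$-dependent decoration $\nu_\ell$, and that $\nu_\ell \Rightarrow \nu$ as $\ell \to \infty$, which requires weak continuity of the PPP construction in the decoration parameter together with uniform-in-$\ell$ integrability of the spine-decomposition weights.
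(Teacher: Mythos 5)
There is a genuine gap, and it sits exactly where you flag your ``main obstacle'': the construction of $\nu$ and the identification of the limiting decoration. Your proposal treats the transversal structure as a new, hard object --- you say the transversal displacements ``interact with the radial conditioning\dots to generate a nontrivial $(d-1)$-dimensional transversal distribution,'' and you propose to build $\nu$ via a fresh Girsanov/spine change of measure with simultaneous control of the Bessel(3) radial profile and the transversal spread. This misses the one observation that makes the theorem tractable, and without it your plan does not close. The paper's key device is to work with an intermediate process ($\cE_{t,\ell}^{\good}$ in \cref{eqn:lanscape-candidates}) in which the recentering particle $u_{t,\ell}^*$ and the rotation are defined using the direction $\theta_{t-\ell}(u)$ of the ancestor at time $t-\ell$, which is measurable with respect to $\cF_{t-\ell}$. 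With that choice, the entire extremal/radial structure (which descendant is maximal, the event of being extremal, the first coordinate of the decoration) is a function of the projection of the increments onto $\theta_{t-\ell}(u)$ alone, so that, \emph{conditionally on the genealogy of $\cN_t(u)$ on $[t-\ell,t]$}, the transversal components form an independent free $(d-1)$-dimensional BBM on that genealogical tree --- there is no interaction with the radial conditioning at all. Consequently $\nu$ is obtained from the known one-dimensional decoration $\cD$ of \cite{ABBS12} simply by dressing its genealogical tree with i.i.d.\ $(d-1)$-dimensional Brownian motions (\cref{rk:one-dimensional-decoration}), and the convergence of $\cE_{t,\ell}^{\good}$ reduces to the already-established joint convergence of the radial extremal process \emph{with genealogy} (\cref{eqn:joint-convergence-genealogy}) from \cite{BKLMZ21}. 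No new Laplace functional computation, moment method, or spine tilting is required; your plan to redo all of these is not wrong in principle but leaves the identification of $\nu$ --- the actual content of the theorem --- unresolved.

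Two smaller points. First, your localization step (restricting the decoration to $[u]_\ell$ and invoking independence of disjoint sub-BBMs rooted at $\cN_{t-\ell}$) is essentially the paper's \cref{lem:banana}--\cref{cor:one-clan}, and is fine; but the paper needs additional deterministic replacement lemmas (angular spread $O(t^{-1})$, norm versus projection onto $\theta_{t-\ell}(u)$ differing by $O(t^{-1})$, and $u_t^*=u_{t,\ell}^*$ with high probability, \cref{lem:norm-same-t-ell,cor:ut=utl}) to pass from the $\theta_{t-\ell}$-based process back to $\cE_{t,\ell}^{\land}$, which is rotated by $\theta_t(u)$ and recentered at the max-norm descendant; your proposal does not address this discrepancy. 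Second, your worry about the $L^{3/2}$ transversal extent and uniform-in-$\ell$ tightness on compacts is misplaced here: the convergence in \cref{thm:extremal-landscape} is in the vague topology on $\mathbb{M}$, which only tests the decoration on compact subsets of $\R^d$; the $L^{3/2}$ scaling is the subject of \cref{thm:extremal-front}, not of this theorem.
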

The proof of \cref{thm:extremal-landscape} is given in \cref{sec:proof-cluster}.
We note that the limiting point process $\cE_{\infty}^{\land}$ may be described as follows: let $\{(\xi_i, \theta_i)\}_{i\in\N}$ denote the atoms of $\mathrm{PPP}(\mathfrak{C}_d e^{-\sqrt2x} \d x \otimes D_{\infty}(\theta) \sigma (\d \theta))$, and let $\{\cC^{(i)}\}_{i\in \N}$ be i.i.d.\ according to $\nu$.  Then we have 
\begin{align} \label{eqn:extremal-landscape-rep}
    \cE_{\infty}^{\land} \egaldistr \sum_{i \in \N} \delta_{(\xi_i, \theta_i, \cC^{(i)})} \,.
\end{align}

The convergence of the extremal cluster $\cC_t$ (\cref{def:front-bbm-extremal-cluster}) follows immediately. 
\begin{maincoro}\label{cor:extremal-cluster}
As $t$ tends to $\infty$, the extremal cluster $\cC_t$ converges weakly in the topology of vague convergence to the point process $\cC$ with law $\nu$.
\end{maincoro}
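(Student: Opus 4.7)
The plan is to deduce the corollary directly from \cref{thm:extremal-landscape} by observing that $\cC_t$ appears explicitly as the third-coordinate mark of a distinguished atom of the extremal landscape $\cE_{t,\ell}^{\land}$, and then extracting it via a continuous mapping argument. Indeed, the maximum-norm particle $u^*$ belongs to $\Gamma_{t,\ell}$ for every $\ell>0$, since $u^*$ is trivially the norm-maximizer of its own $\ell$-clan. By \cref{def:front-bbm-extremal-cluster} and \cref{def:extremal-landscape}, the atom of $\cE_{t,\ell}^{\land}$ indexed by $u^*$ is precisely $\delta_{(R_t^* - m_t(d),\, \theta_t(u^*),\, \cC_t)}$, and almost surely this is the unique atom whose first coordinate equals the maximum $R_t^* - m_t(d)$.

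I would introduce the ``argmax-and-extract'' functional $\Phi$ sending a Radon point measure $\mu = \sum_j \delta_{(x_j, \theta_j, \mu_j)}$ on $\R\times\S^{d-1}\times\mathbb{M}$ to the mark $\mu_{j^*}\in\mathbb{M}$ attached to the atom of largest first coordinate (well-defined whenever the argmax is unique and finite), so that $\Phi(\cE_{t,\ell}^{\land})=\cC_t$ for every $t$ and $\ell$. The key task is to verify that $\Phi$ is almost surely continuous, in the vague topology, at realizations of $\cE_\infty^{\land}$. Writing $\cE_\infty^{\land}=\sum_i \delta_{(\xi_i,\theta_i,\cC^{(i)})}$ as in \eqref{eqn:extremal-landscape-rep}, the first-coordinate marginal $\{\xi_i\}$ is a Cox process on $\R$ whose intensity is $\mathfrak{C}_d e^{-\sqrt{2}x}\,dx$ multiplied by the random, a.s.\ positive and finite total mass $\int_{\S^{d-1}} D_\infty(\theta)\,\sigma(d\theta)$. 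Since $e^{-\sqrt{2}x}$ is integrable on $[x_0,\infty)$ for every $x_0 \in\R$ and the driving intensity is absolutely continuous, the maximum $\xi_{i^*}:=\max_i \xi_i$ exists almost surely, is uniquely achieved, and is tight in $\R$.

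For any $\delta>0$ one can then choose $A<B$ and $\eta>0$ such that with probability at least $1-\delta$, $\xi_{i^*}\in[A,B]$ and $\xi_{i^*}-\max_{i\neq i^*}\xi_i>\eta$. On the slab $[A-1,\infty)\times\S^{d-1}\times\mathbb{M}$, only finitely many atoms are present almost surely, and $\Phi$ restricted to such ``good'' finite configurations is continuous under vague convergence of both the base points and the marks. The continuous mapping theorem applied along the iterated limits in \cref{thm:extremal-landscape} then yields $\cC_t=\Phi(\cE_{t,\ell}^{\land})\Rightarrow \Phi(\cE_\infty^{\land})$. Since $\cC_t$ does not depend on $\ell$, the inner $t\to\infty$ limit already identifies the law of the limit independently of $\ell$. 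By the i.i.d.\ product structure of the intensity of $\cE_\infty^{\land}$ in \eqref{eqn:extremal-landscape-rep}, the mark $\cC^{(i^*)}$ attached to the argmax atom is independent of the ground process on $\R\times\S^{d-1}$ and has law $\nu$, which proves the corollary.

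The main obstacle I anticipate is the continuity of $\Phi$: vague convergence of point measures on a product space whose mark component $\mathbb{M}$ is itself an infinite-dimensional space of Radon measures is subtle, and one must justify that the argmax in the first coordinate is stable under perturbations that may shift atoms within $\mathbb{M}$. The exponential decay of the intensity in the first coordinate nonetheless localizes the competition for the argmax to a bounded slab containing only finitely many atoms almost surely, on which the extraction of the argmax mark becomes a routine joint-continuity statement provided the argmax is uniquely achieved with a positive gap --- properties that hold almost surely for $\cE_\infty^{\land}$.
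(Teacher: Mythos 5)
Your proposal is correct and is essentially the paper's own argument: the paper gives no proof beyond the remark that the corollary ``follows immediately'' from \cref{thm:extremal-landscape}, and the intended deduction is precisely your observation that $\cC_t$ is the mark of the argmax atom of $\cE_{t,\ell}^{\land}$ (independent of $\ell$), combined with the stability of the argmax-mark extraction under vague convergence and the i.i.d.\ mark structure of $\cE_{\infty}^{\land}$. Your write-up simply supplies more detail than the paper does.
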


\begin{remark}\label{rk:general-front}
  \cref{thm:extremal-front} describes the front around the \emph{maximal} particle $u^*$. As mentioned before, from \cref{thm:extremal-landscape} and our proof of \cref{thm:extremal-front}, it is evident that the fronts behind all $\ell$-clan leaders of near-maximal radial component  converge simultaneously to i.i.d.\ copies of $\rho$. 
\end{remark}

In the following subsection, we give an explicit description of $\cC$, which of course also describes $\nu$; see Figure~\ref{fig:just-extremal-cluster} for a simulation. The proof of \cref{thm:extremal-front} will proceed via the analysis of the front of $\cC$ (see \cref{thm:extremal-front-C}). 

\begin{figure}
\centering
\includegraphics[width = 0.9\textwidth]{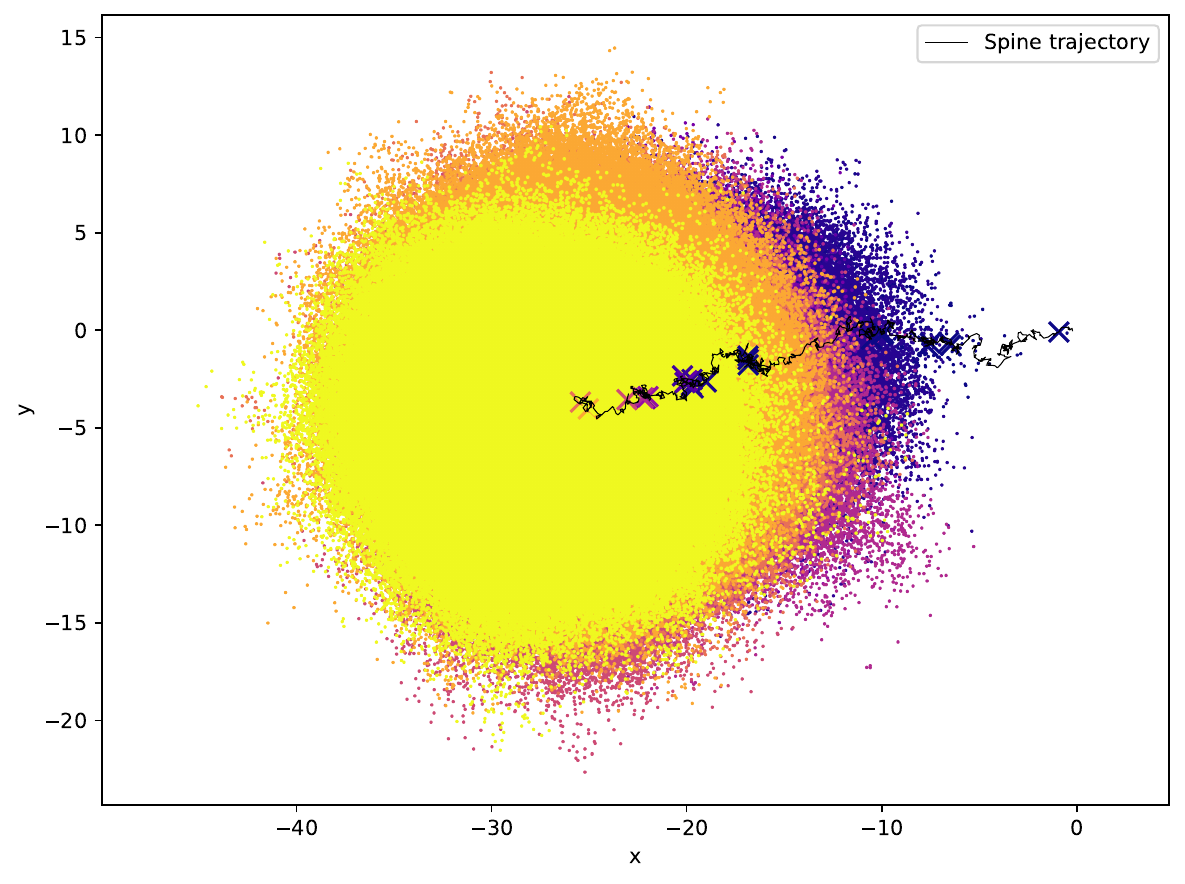}
\caption{A simulation of a point process approximating the extremal cluster $\cC$ in dimension $2$. The trajectory of the spine is approximated by $S_s := (-\sqrt2s-R_s, Y_s)$, where $R_s$ is a Bessel(3) process and $Y_s$ an independent standard Brownian motion (black). See \cref{rk:spine-bessel} for an explanation of why $A_s \approx -\sqrt2s-R_s$. 
The branching times are approximated by a Poisson point process of rate $2$, the location of the spine at each branching time is marked with a uniquely-colored ``$\times$'', and  the associated BBM point cloud is plotted in the same color. The simulation contains 25 BBM point clouds.
}
\label{fig:just-extremal-cluster}
\end{figure}

\subsection{Description of the limit of the extremal cluster}
\label{subsec:extremallimitcluster}
We now describe  the limiting point process $\cC$ with law $\nu$ via the following branching particle system, which we  refer to as $\mathfrak{B}$.
\begin{enumerate}
    \item (Spine) Define the \emph{spine} particle $\xi$ with path 
\[
    \big(\cS_s(\xi)\big)_{s\geq 0} = \big(A_s(\xi), \Y_s(\xi) \big)_{s \geq 0} \subset \R \times \R^{d-1}\,, 
\]
where $\Y_.(\xi)$ is a standard Brownian motion in $\R^{d-1}$ and $A_.(\xi)$ is an independent process in $\R$ whose exact definition is postponed to \cref{subsec:A-pi}, as it is not needed till then. 

    \item (Branching times) Next, conditionally on $A_.(\xi)$, define a Poisson point process $\pi = (\tau_1<\tau_2<\dots)$ on $[0,\infty)$ whose intensity is a function of $A_.(\xi)$ (we again postpone the precise definition of the intensity to \cref{subsec:A-pi}). We refer to the atoms of $\pi$ as \emph{branching times}.

    \item (BBM point clouds) At each time $\tau_i$, $\xi$ creates 
      a new particle, which generates
      a $d$-dimensional BBM $\mathfrak{B}^{i}$ started from $\cS_{\tau_i}$, stopped at time $2\tau_i$, and conditioned on the event $E_i$ defined as follows. 
    If $\cN_s^{i*}$ denotes the set of particles of $\mathfrak{B}^{i}$ alive at time $\tau_i+s$ and the trajectories of the particles $v\in \cN_s^{i*}$ are denoted by 
    \[
    \B^*_s(v) = (A_{\tau_i}(\xi) + X^*_s(v), \Y_{\tau_i}(\xi) + \Y^*_s(v)) \in \R \times \R^{d-1}\,,
    \]
    then we define the event 
\begin{align} \label{def:Ei}
    E_i := \{ \max_{v\in\cN_{\tau_i}^{i*}} X^*_{\tau_i}(v) + A_{\tau_i}(\xi) < 0\}\,.
\end{align}
Since $E_i$ is measurable with respect to the projection of $\mathfrak{B}^i$ onto the first standard basis vector $\mathbf{e}_1$, note that
the law of $\{\Y_s^*(v)\}_{v\in \cN_s^{i*}, s\in[0,\tau_i]}$ 
is that of a standard $(d-1)$-dimensional BBM conditioned on having the same genealogy as $\cN_s^{i*}$; in particular, the path $(\Y_s^*(v))_{s\in[0,\tau_i]}$ for each $v \in \cN_s^{i*}$ is a standard Brownian motion in $\R^{d-1}$ started from $0$.
We will refer to $\mathfrak{B}^{i}$ as the \emph{BBM point cloud born at time $\tau_i$}.
\end{enumerate}
Finally, we have
\begin{align} \label{def:extremal-cluster}
    \cC = \delta_0 + \sum_{i=1}^{\infty} \sum_{v \in \cN_{\tau_i}^{i*}} \delta_{\B^*_{\tau_i}(v)} \,.
\end{align}
We elaborate on the structure of $\cC$ and discuss connections with the 1D case in \cref{subsec:previous-results}.

\subsection{Outline of the proofs}
\label{sub:outline_of_the_proofs}
The statement of \cref{thm:extremal-landscape} is rather intuitive, given the knowledge of the extremal process from \cite{BKLMZ21} (\cref{eqn:epp-convergence} here):
indeed, the clan around an extremal 
particle 
can be constructed from 
the backwards trajectory of the extremal particle, viewed as ``the spine'',
together with collections of particles that have branched from the spine 
at times $t-s$, $s\geq 0$; this is described in 
\cref{subsec:previous-results}. The structure of the spine projected onto $\mathbf{e}_1$ is as 
in the case of one-dimensional BBM, and is given explicitly in \cite{ABBS12}.
The law  of the branched particles is that of a BBM conditioned not to
``overtake'' the spine particle at time $t$, 
and due to geometric considerations, it is not hard to see that the radial
increments of the process coincide with the direction of the clan leader at time $t-s$, for any fixed time $s$, as $t\to\infty$. This will show in turn
that the transversal increment of these particles is, given the genealogy of 
the particles, independent of the radial component. Altogether,
this will give \cref{thm:extremal-landscape}, and is explained 
in \cref{sec:proof-cluster}.

As mentioned earlier, we prove \cref{thm:extremal-front} as a consequence of \cref{thm:extremal-front-C}, which describes the front of $\cC$.
To analyze the front, we first identify
which particles in the description of $\cC$ have an $\mathbf{e}_1$-component that is likely to fall within the window $(-sL,-sL+1]$,
for each $s>0$ and $L$ large. 
One can see from a first moment computation that  most of these particles
have split from the spine at a time $\tau_i$ of order $L^2$; 
this fact is related to the analysis of the one-dimensional
decoration measure performed in \cite{CHL21}, see the paragraph below (1.27)
there. Moreover, these turn out to be the particles that constitute the front.
An important
factor affecting the number of particles landing in that window is the (radial)
location of the spine at those times --- given the spine, 
the maximal transversal displacement in any direction, being the maximum of an
exponential-in-$L$ 
number of Gaussian random variables of variance $L^2$, turns out to concentrate in the scale $L^{3/2}$.

Of course, these Gaussians are not independent, being correlated through the genealogy of the branching process, and our proof does not attempt to make rigorous the last step of the above heuristic. 
Our strategy for computing the maximal transversal displacement $M_s$ amongst particles in $\cC$ with $\mathbf{e}_1$-component inside the window $(-sL,-sL+1]$ is as follows. 
Recall from \cref{subsec:extremallimitcluster} that $\cC$ may be described as the concatenation of a sequence of \emph{BBM point clouds} along the trajectory of the \emph{spine}, which carries an explicit description. 
We can therefore compute $M_s$ by (1) computing the maximal and minimal extent of each BBM point cloud, (2) understanding  the fluctuations of the spine, and (3) optimizing over the BBM point clouds. Towards (1), in Lemma~\ref{lem:h-to-hbar}, we show that each BBM point cloud can be replaced with an independent, standard BBM run for the same amount of time, and moreover, only the BBM clouds born at time $\tau_i \approx L^2$ contribute to the front. The maximal extent of all of these BBMs can be bounded using known tail bounds on the maximum norm of BBM. A key insight is that the minimal extent can be computed using G\"{a}rtner's theorem
\cite{gartner}; we refer to the start of \cref{sec:proof-hbar-convergence} for a detailed outline of this argument. We remark that, in particular, we avoid the use of the second moment method, which is the usual but rather involved strategy employed to analyze the extrema of log-correlated fields.

Towards (2), we approximate the trajectory of the spine in terms of a Bessel(3) process  in \cref{sec:spine}. Ultimately, this is the Bessel process that appears in \cref{thm:extremal-front}. Optimizing over all BBM clouds  in step (3) results in the Fenchel-Legendre transform in \cref{thm:extremal-front}.

It then remains to analyze the spine fluctuations. 
There is a competition here, between the location of the 
spine (which itself, for large times, is close to a Bessel(3) process, see \cref{rk:spine-bessel})
and the number of branched particles
that land in the window. This competition, together with Brownian scaling
relations, leads to the variational problem
in  \eqref{eq-rho}.

\subsection{Asymptotic Notation}
\label{subsec:asymp-notation}
We use the standard (Bachmann-Landau) big $O$ and little $o$ notation. When the asymptotic parameter is unclear, we write it in the subscript; eg., if the asymptotic 
parameter is $K$, we will write $o_K(\cdot)$ and $O_K(\cdot)$.

\section{Distributional limit of the extremal landscape}
\label{sec:proof-cluster}
The description of the limiting extremal cluster $\cC$ (\cref{subsec:extremallimitcluster}) draws heavily from the description from \cite{ABBS12} of the 
decoration measure $\cD$ in the setting of one-dimensional BBM; indeed, $\cD$ is given by projecting $\cC$ onto the first standard basis vector $\mathbf{e}_1$.  
The Poisson point process structure of $\cE_{\infty}^{\land}$ is derived from the results in \cite{BKLMZ21} on multi-dimensional BBM. 
We recount the relevant results of \cite{ABBS12} and \cite{BKLMZ21} in \cref{subsec:previous-results}, then prove \cref{thm:extremal-landscape} in \cref{subsec:extremal-landscape-pf}.

\subsection{Preliminaries and previous results}
\label{subsec:previous-results}
The genealogical notation set out in \cref{subsec:extremal-landscape} will be used throughout the remainder of the section. 

We begin in the setting of \textit{one-dimensional} BBM, still denoted by $\B_t(v)$, and the results of \cite{ABBS12}. Consider the following point process on $(-\infty,0]$:
\[
    \cD_{t,\ell} = \sum_{v\in [u^*]_{\ell}} \delta_{\B_t(v) - \B_t(u^*)}\,,
\]
Fix $v\in [u^*]_{\ell}$, and let $v$ split from $u^*$ at time $t-\mathsf{t}_i$. From the formula
\[
    \B_t(v) - \B_t(u^*) = (\B_{t-\mathsf{t}_i}(u^*)- \B_t(u^*)) + (\B_t(v) - \B_{t-\mathsf{t}_i}(v)) \,,
\]
we understand 
$\B_t(v) - \B_t(u^*)$ as the backwards increment of $u^*$ accrued until the splitting time of $v$ and $u^*$, plus the increment accrued by $v$ after branching from $u$. 
In particular, it is helpful to view $\cN_t$ via what we call the \emph{spinal representation} (with $u^*$ as the spine):
begin at time $0$ with $u^*$ evolving under the backwards path 
$(\B_{t-s}(u^*)-\B_t(u^*))_{s\geq 0}$.
At each $\mathsf{t}_i$, $u^*$ creates a 
new particle, which in turns generates
the branching particle system $\mathfrak{N}^{(i)}$ from time $\mathsf{t}_i$ to $2\mathsf{t}_i$, after which this branching particle system is stopped (the particles stop moving and branching for the rest of time). 

Then $\cD_{t,\ell}$ is the point process formed by $0$ (coresponding to $v = u^*$) and the terminal locations of particles in the spinal representation of $\cN_t$ corresponding to branching times $\mathsf{t}_i \leq \ell$. In \cite[Theorem~2.3]{ABBS12}, it is shown  that $\cD_{t,\ell}$ converges
\textit{jointly} with the backwards path of $u^*$ to $(\mathcal{D}, A_.(\xi))$, recalling $A_.(\xi)$ from \cref{subsec:extremallimitcluster}). We remark that in our case, $\xi$ and its path $\cS_s(\xi)$ correspond in the prelimit to $u^* \in \cN_t$ and the backwards path of $u^*$; the Poisson point process $\pi$ corresponds to the branching times $\mathsf{t}_i$; 
and each conditioned BBM $\mathfrak{B}^i$ corresponds to $\mathfrak{N}^{(i)}$.

Though they do not need this fact, the work of \cite{ABBS12} also shows the convergence can be made jointly with the genealogy of the particles in the spinal representation, in the following sense. Enumerate  particles of $[u^*]_{\ell}$ in decreasing order 
(so $v_1 = u^*$ and $\B_t(v_i) \geq \B_t(v_{i+1})$). Form the matrix $\mathbf{M}_{t,\ell}[u^*] := [d(v_i, v_j)]$, where $d(v_i,v_j)$ denotes the branching time of $v_i$ and $v_j$ in the spinal representation. 
Similarly, enumerate the particles in $\mathfrak{D}$ in decreasing order, and form the (infinite) matrix $\mathbf{M}$ of branching times. 
Then $\cD_{t,\ell}$ converges jointly in distribution with $\mathbf{M}_{t,\ell}[u^*]$ to $(\cD, \mathbf{M})$. 

\begin{remark}\label{rk:one-dimensional-decoration}
Let $\mathfrak{D}$ be the  branching particle system on $\R$ formed by projecting $\mathfrak{B}$ (\cref{subsec:extremallimitcluster}) onto $\mathbf{e}_1$. Then $\cD$ is the point process of the terminal locations of the particles of $\mathfrak{D}$, and $\cC$ can be constructed from $\cD$ by associating independent Brownian motions in $\R^{d-1}$ to each segment of the genealogical tree of $\mathfrak{D}$ (note $\mathfrak{B}$ and $\mathfrak{D}$ share the same genealogical tree).
\end{remark}

We now relate
 \cref{thm:extremal-landscape} to the results of \cite{BKLMZ21}, where convergence of the extremal point process of \emph{multidimensional} BBM was proved, as in \cref{eqn:epp-convergence}.
As explained in \cite{BKLMZ21} (below their Proposition 1.3), one can infer the following two results from \cref{eqn:epp-convergence} and its proof. First, letting $\mu$ denote the law of $\cD$, we have the following weak convergence as first $t\to\infty$ then $\ell\to\infty$:
\begin{align} \label{def:E-hat}
    \hat{\cE}_{t,\ell} := \sum_{u \in \Gamma_{t,\ell}} \delta\Big(R_t(u) - m_t(d), \theta_t(u), \sum_{v \in [u]_{\ell}} \delta_{R_t(v) - R_t(u)} \Big) \to \hat{\cE}_{\infty}\,,
\end{align}
where $\hat{\cE}_{\infty} := 
\mathrm{PPP}( \mathfrak{C}_d e^{-\sqrt2x}\d x \otimes D_{\infty}(\theta)\sigma(\d\theta) \otimes \mu)$. 
Here, weak convergence takes place 
in the space of point processes on $\R \times \S^{d-1} \times \tilde{\mathbb{M}}$, where $\tilde{\mathbb{M}}$ denotes the space of Radon measures on $\R$, endowed with the vague topology.
Therefore, the contribution of \cref{thm:extremal-landscape} comes from the joint convergence with the third component of $\cE_{t,\ell}^{\land}$; in particular, we have added to the results of \cite{BKLMZ21} by supplying the information on the BBM around $\B_t(u)$ in directions transverse to $\theta_t(u)$, for each $u\in \Gamma_{t,\ell}$ that is extremal (has norm close to $m_t(d)$).
Second, it was mentioned in \cite{BKLMZ21} that one can add genealogical information to the convergence of $\hat{\cE}_{t,\ell}$, for instance, in the sense of \cite{BoHa17} and \cite{Mal18}. 
Joint convergence with the genealogy in the simpler sense above \cref{rk:one-dimensional-decoration} also holds: write the spinal representation of $[u]_{\ell}$ for each $u\in \Gamma_{t,\ell}$ (with $u$ as the spine), and form the matrix of branching times $\mathbf{M}_{t,\ell}[u]$. Then
\begin{align} \label{eqn:joint-convergence-genealogy}
    \cE_{t,\ell}^* := \sum_{u \in \Gamma_{t,\ell}} \delta\Big(R_t(u) - m_t(d), \theta_t(u), \sum_{v \in [u]_{\ell}} \delta_{R_t(v) - R_t(u)}, \mathbf{M}_{t,\ell}[u] \Big)\to \cE_{\infty}^*
\end{align}
in distribution, where $\cE_{\infty}^* := \mathrm{PPP}( \mathfrak{C}_d e^{-\sqrt2x}\d x \otimes D_{\infty}(\theta)\sigma(\d\theta) \otimes \mu \otimes \mathbf{m})$ and $\mathbf{m}$ denotes the law of $\mathbf{M}$ on $\R^{\Z}$. Using the notation of~\eqref{eqn:epp-intensity} and letting $\mathbf{M}^{(i)}$ denote i.i.d.\ copies of $\mathbf{M}$, we may express
\begin{align}\label{eqn:E-infinity-*}
    \cE_{\infty}^* = \sum_{i=1}^{\infty} \delta_{(\xi_i, \theta_i, \cD^{(i)}, \mathbf{M}^{(i)})}\,.
\end{align}

\subsection{Proof of \cref{thm:extremal-landscape}}
\label{subsec:extremal-landscape-pf}
We utilize the result of \cite{KLZ21} that the norm processes of extremal BBM particles lie in a restricted set of paths until time $t- \ell$, where $\ell$ is a parameter sent to $\infty$ after $t$. We call this restricted set of paths $\mathsf{ExtNorm}(t,\ell)\subset C[0,t-\ell]$, where $C[0,s]$ denotes the space of continuous functions from $[0,s]$ to $\R$. We will not need the full description, which is notation heavy;
in words,  $\mathsf{ExtNorm}(t-\ell)$ is the collection of paths that stay within an ``entropic region''  below a linear path up to time $t-\ell$, and end within a ``good window'' at time $t-\ell$   (more precisely, it is the set
$\mathsf{ExtNorm}(t-\ell) := \{ f\in C[0,t-\ell] :  \cB^{\Bumpeq}_{[0,t-L-\ell]}(f(\cdot+L)) \cap \{ \mathbf{y}(f(t-\ell)) \in [\ell^{1/3}, \ell^{2/3}] \}$,
where we used the notation $\mathcal{B}^{\Bumpeq}$ from \cite[(5.7)]{KLZ21} and $\mathbf{y}(\cdot)$ from \cite[(5.4)]{KLZ21}).

The only quantitative description of $\mathsf{ExtNorm}(t-\ell)$ we will need is the following uniform estimate on the endpoint of paths:
\begin{align}
  f\in\mathsf{ExtNorm}(t-\ell) \quad \mbox{\rm implies that} \quad |f(t-\ell) - m_t(d)| = O(\ell) \label{eqn:candidate-endpoint}
\end{align}
where the $O(\ell)$ holds uniformly over $f\in \mathsf{ExtNorm}(t,\ell)$ as $t\to\infty$, for $\ell$ large.

We call the subset of particles in $\cN_{t-\ell}$ whose norm processes lie in $\mathsf{ExtNorm}(t-\ell)$ the \emph{candidate particles}, as in, candidate to produce an extremal particle at time $t$ (see \cref{lem:banana}):
\[
    \cN_{t-\ell}^{\cand} := \{v\in \cN_{t-\ell} :  R_.(v) \in \mathsf{ExtNorm}(t,\ell)\}\,.
\]

For $\ell \leq t$ and $u \in \cN_{t-\ell}$,  let $u_{t,\ell}^* \in \cN_t$ denote the a.s.--unique descendant of $u$ in $\cN_t$ that travelled the furthest in the direction of $u$:
\[
\B_t(u_{t,\ell}^*) \cdot \theta_{t-\ell}(u) = \max_{ w \in \cN_t(u)} \B_t(w) \cdot \theta_{t-\ell}(u) \,.
\]
Lastly, define the point process
\begin{align}
    \cE_{t,\ell}^{\good} := \sum_{u \in \cN_{t-\ell}^{\cand}} \delta\Big(\B_t(u_{t,\ell}^*) \cdot \theta_{t-\ell}(u) - m_t(d), \theta_{t-\ell}(u), \sum_{v \in \cN_t(u)} \delta_{\cR_{\theta_{t-\ell}(u)}(\B_t(v) - \B_t(u_{t,\ell}^*))} \Big)\,.
    \label{eqn:lanscape-candidates}
\end{align}
The advantage of  $\cE_{t,\ell}^{\good}$ is that, for each $u\in \cN_{t-\ell}^{\cand}$, the evolution of its descendents over $[t-\ell, t]$ 
decomposes into two independent evolutions--- in $\mathrm{span}(\theta_{t-\ell}(u))$ and in the orthogonal complement--- conditionally on the genealogy of $\cN_t(u)$. Indeed, we have the following result.  

\begin{proposition}\label{prop:reduction-t-ell}
The following weak limits hold in the same sense as in \cref{thm:extremal-landscape}:
\begin{enumerate}
    \item the weak limit of $\cE_{t,\ell}^{\good}$ is $\cE_{\infty}^{\land}$, and
    \item the point process $\cE_{t,\ell}^{\land}$ has the same weak limit 
as $\cE_{t,\ell}^{\good}$.
\end{enumerate}
\end{proposition}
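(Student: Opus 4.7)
The plan is to prove part (2) first, reducing $\cE_{t,\ell}^{\land}$ to $\cE_{t,\ell}^{\good}$ on a high-probability event, and then to obtain part (1) by enriching the joint convergence \cref{eqn:joint-convergence-genealogy} with the independent transverse Brownian data.

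For part (2), the key point is a correspondence between near-extremal $\ell$-clan leaders and candidate particles at time $t-\ell$. By the entropic envelope estimates of \cite{KLZ21,BKLMZ21}, with probability tending to $1$ as first $t\to\infty$ then $\ell\to\infty$, every $u\in \Gamma_{t,\ell}$ satisfying $R_t(u)-m_t(d)\geq -K$ has its ancestor $u'$ at time $t-\ell$ in $\cN_{t-\ell}^{\cand}$, and conversely each such candidate $u'$ produces exactly one such clan leader. On this event, atoms can be matched directly: \cref{eqn:candidate-endpoint} gives $\|\B_{t-\ell}(u')\|=\sqrt{2}\,t(1+o(1))$, and the transverse spread (relative to $\theta_{t-\ell}(u')$) of the near-extremal descendants of $u'$ is $O(\ell)$, so elementary polar geometry yields $|\B_t(w)\cdot\theta_{t-\ell}(u')-R_t(w)|=O(\ell^2/t)$ and $|\theta_{t-\ell}(u')-\theta_t(u_t^*)|=O(\ell/t)$, both vanishing as $t\to\infty$. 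In particular, the maximizer in direction $\theta_{t-\ell}(u')$ agrees with the norm maximizer, and the rotations $\cR_{\theta_{t-\ell}(u')}$ and $\cR_{\theta_t(u_t^*)}$ coincide in the limit on bounded neighborhoods. Thus $\cE_{t,\ell}^{\land}$ and $\cE_{t,\ell}^{\good}$ share the same weak limit in the vague topology on $\R \times \S^{d-1} \times \mathbb{M}$.

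For part (1), the plan is to combine \cref{eqn:joint-convergence-genealogy} with the orthogonal decomposition of BBM. The direction $\theta_{t-\ell}(u)$ is $\cF_{t-\ell}$-measurable, so conditionally on the ancestral path, the descendants of a candidate $u$ form a standard $d$-dimensional BBM started at $\B_{t-\ell}(u)$; written in the orthonormal basis $(\theta_{t-\ell}(u),\theta_{t-\ell}(u)^{\perp})$, this splits into a $1$-dimensional BBM together with, given the common genealogy, an independent $(d-1)$-dimensional BBM. The radial projection, recentered by $\B_t(u_{t,\ell}^*)\cdot\theta_{t-\ell}(u)$, is exactly the third coordinate of $\cE_{t,\ell}^*$, whose joint convergence with the genealogy matrices to $(\cD^{(i)},\mathbf{M}^{(i)})_{i\in\N}$ is the content of \cref{eqn:joint-convergence-genealogy}. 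The transverse projection, conditionally on the genealogy $\mathbf{M}^{(i)}$, decomposes into independent centered Brownian increments on each edge of the genealogical tree---precisely the rule that builds $\cC^{(i)}$ from $\cD^{(i)}$ in \cref{rk:one-dimensional-decoration}. Tightness of these transverse BMs on the compact window $[0,\ell]$ is automatic, so the continuous mapping theorem lifts the convergence of $\cE_{t,\ell}^*$ to $\cE_{t,\ell}^{\good}\Rightarrow \cE_{\infty}^{\land}$.

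The main technical obstacle is ensuring that neither candidacy nor the selection of $u_{t,\ell}^*$ as the direction maximizer biases the transverse BBM. This is the reason to work with $\cE_{t,\ell}^{\good}$ rather than $\cE_{t,\ell}^{\land}$ directly: candidacy is a condition on the ancestral radial trajectory up to time $t-\ell$, the rotation $\cR_{\theta_{t-\ell}(u)}$ is $\cF_{t-\ell}$-measurable, and the choice of $u_{t,\ell}^*$ depends only on the projection onto $\mathrm{span}(\theta_{t-\ell}(u))$. Orthogonality of Brownian increments then leaves the transverse $(d-1)$-dimensional component an unbiased BBM given the radial data and genealogy, which is exactly what yields the conditional independence underlying the product structure of $\cE_{\infty}^{\land}$ in \cref{eqn:extremal-landscape-rep}.
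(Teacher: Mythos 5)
Your overall strategy is the same as the paper's: part (1) via the orthogonal decomposition of the post-$(t-\ell)$ evolution into the $\theta_{t-\ell}(u)$-projection (handled by \eqref{def:E-hat} and \eqref{eqn:joint-convergence-genealogy}) and a conditionally independent transverse $(d-1)$-dimensional BBM attached to the genealogical tree as in \cref{rk:one-dimensional-decoration}; part (2) via a chain of high-probability replacements based on the candidate reduction and the $O(t^{-1})$ discrepancies between norms/angles measured at times $t-\ell$ and $t$. That is exactly the paper's route (the paper does (1) before (2), but the order is immaterial).

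Two steps in your part (2) are asserted rather than proved, and neither follows from ``elementary polar geometry'' alone. First, the third coordinate of $\cE_{t,\ell}^{\land}$ sums over \emph{all} of $\cN_t$, whereas $\cE_{t,\ell}^{\good}$ sums only over the descendants $\cN_t(u)$; to match these in the vague topology on $\mathbb{M}$ you must rule out that a particle from a \emph{different} clan lands within $O(1)$ of $\B_t(u_t^*)$. Such a particle would itself be extremal, and excluding it requires the angular separation of distinct extremal clans, which comes from the convergence of $\hat{\cE}_{t,\ell}$ to a Poisson process with non-atomic angular intensity (this is \cref{cor:one-clan} in the paper) --- it is a probabilistic input, not a deterministic geometric one. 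Second, your claim that the $\theta_{t-\ell}(u)$-direction maximizer $u_{t,\ell}^*$ coincides with the norm maximizer $u_t^*$ does not follow merely from $|R_t(w)-\B_t(w)\cdot\theta_{t-\ell}(u)|=O(\ell^2/t)$: the two maximizers could differ if two descendants of $u$ had norms within $O(t^{-1})$ of each other. One needs an anti-concentration estimate (a union bound over the $O(f(\ell))$ descendants together with the fact that a Bessel process at time $\ell$ lands in a ball of radius $O(t^{-1})$ with vanishing probability; \cref{cor:ut=utl} in the paper). Both points are easily repaired, but as written they are genuine gaps in the argument.
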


\begin{proof}[Proof of \cref{thm:extremal-landscape}]
\cref{thm:extremal-landscape} follows immediately from \cref{prop:reduction-t-ell}.
\end{proof}

We prove \cref{prop:reduction-t-ell}(1) below, and \cref{prop:reduction-t-ell}(2) in \cref{subsec:pf-prop-reduction-t-ell}.
\begin{proof}[Proof of \cref{prop:reduction-t-ell}(1)]
For each $v\in \cN_t(u)$, consider the projected, rotated increments:
\begin{align*}
    W_s^{i}(v) := \mathbf{e}_i \cdot \cR_{\theta_{t-\ell}(u)}(\B_{s+t-\ell}(v) - \B_{t-\ell}(v))  \quad \text{ for }  \text{$i \in \{1,\dots, d\}$} \text{ and } s \in [0,\ell]\,,
\end{align*}
where $\mathbf{e}_i$ denotes the $i^{\mathrm{th}}$ standard basis vector in $\R^d$.
Observe that $u_{t,\ell}^* = \mathrm{argmax}_{v\in \cN_t(u)} W_{\ell}^1(v)$.
Thus, the identification of $u_{t,\ell}^*$ is independent of the trajectory $(W_s^2(v),\dots,W_s^d(v))_{s\in [0,\ell]}$ for any fixed $v \in \cN_{t}(v)$; 
moreover, conditionally on the genealogy of $\cN_t(u)$ on the time interval $[t-\ell,t]$, the process 
\[
    \mathcal{W}^{\perp} := \{ (W_s^2(v), \dots, W_s^d(v))_{s\in [0,\ell]} : v \in \cN_t(u)\}
\]
is an independent $(d-1)$-dimensional BBM constructed on the genealogical tree of $\cN_t(u)$ (that is, for any edge of the genealogical tree of $\cN_t(u)$, the corresponding increment of $\mathcal{W}^{\perp}$ has the law of an independent $(d-1)$-dimensional Brownian motion).

This also implies that, conditionally on the genealogy of $\cN_t(u)$ on  $[t-\ell,t]$, $\mathcal{W}^{\perp}$ is independent of the point process $\cE_{t,\ell}^{\good,(1)}$ formed by projecting the third coordinate of $\cE_{t,\ell}^{\good}$  onto $\mathbf{e}_1$:
\[
    \cE_{t,\ell}^{\good,(1)}:= \sum_{u \in \cN_{t-\ell}^{\cand}} \delta\Big(\B_t(u_{t,\ell}^*).\theta_{t-\ell}(u) - m_t(d), \, \theta_{t-\ell}(u), \sum_{v \in \cN_t(u)} \delta_{\mathbf{e}_1\cdot\cR_{\theta_{t-\ell}(u)}(\B_t(v) - \B_t(u_{t,\ell}^*))} \Big)\,.
\]
Now, as a consequence of \cref{prop:reduction-t-ell}, the  point process $\cE_{t,\ell}^{\good,(1)}$ has the same weak limit (as first $t\to\infty$, then $\ell \to \infty$) 
as $\hat{\cE}_{t,\ell}$, defined in \cref{def:E-hat} with the limiting point process being
\[
    \hat{\cE}_{\infty} = \mathrm{PPP}( \mathfrak{C}_d e^{-\sqrt2x}\d x \otimes D_{\infty}(\theta)\sigma(\d\theta) \otimes \cD)\,.
\]
Moreover, in the sense of \cref{eqn:joint-convergence-genealogy,eqn:E-infinity-*}, $\cE_{t,\ell}^{\good,(1)}$ converges jointly with the genealogy of $\cN_t(u)$ on $[t-\ell,t]$ for each $u \in \cN_{t-\ell}^{\cand}$ 
to $\hat{\cE}_{\infty}$ with i.i.d.\ copies of $\mathbf{M}$.

It is then immediate from the aforementioned conditional independence of $\mathcal{W}^{\perp}$ and the discussion in \cref{rk:one-dimensional-decoration} 
on the connection between $\cC$ and $\cD$
that the distributional limit of $\cE_{t,\ell}^{\good}$ is obtained from $\hat{\cE}_{\infty}$ by replacing the law of $\cD$ with the law of $\cC$ (i.e., $\mu$ with $\nu$). The resulting point process is exactly $\cE_{\infty}^{\land}$.   
\end{proof}

\subsection{Proof of \cref{prop:reduction-t-ell}(2)} 
\label{subsec:pf-prop-reduction-t-ell}
The first input,
\cref{lem:banana}, shows that candidate particles are the only particles in $\cN_{t-\ell}$ that can produce extremal particles in $\cN_t$.

\begin{lemma}[{\cite{KLZ21}}]
\label{lem:banana}
Fix $K>0$. With $\mathsf{ExtNorm}(t,\ell)$ defined at the start of \cref{subsec:extremal-landscape-pf}, we have
\begin{align} \label{eqn:banana}
    \lim_{\ell\to\infty} \liminf_{t\to\infty} \P\big(\exists u\in \cN_{t-\ell} \setminus \cN_{t-\ell}^{\cand}\,,\, \exists v\in \cN_t(u) : R_t(v) - m_t(d) \geq -K \big) = 0\,. 
\end{align}
Moreover, there exists a function $f: \R_+\to\R_+$ such that
\begin{align} \label{eqn:bounded-number-candidate}
\lim_{\ell \to \infty}\limsup_{t\to\infty}\P\big(\#\big\{v \in \cN_t(u) : u \in \cN_{t-\ell}^{\cand} \big\} \leq f(\ell)\Big)  =1\,.
\end{align}
\begin{proof}
The lemma is a consequence of \cite[Theorem~3.1, Lemma~4.3, Lemma~5.1, and Claim~5.5]{KLZ21} and their proofs. To clarify, Theorem~3.1 of \cite{KLZ21} shows that, with high probability, any particle $v\in \cN_t$ such that $R_t(v) - m_t(d) \geq -K$ must satisfy the time-$L$ condition $z:= \sqrt2L - R_L(v) \in [L^{1/6},L^{2/3}]$, where $L:= L(\ell)$ can be any function satisfying $L\geq \ell^{1/6}$.  Lemma~4.3, Lemma~5.1, and Claim~5.5 of \cite{KLZ21}  consider BBM started from a particle with norm $\sqrt2L-z$ at time $0$ (instead of time $L$, in light of the Markov property), for $z$ satisfying the above condition. 
Along with the proof of \cite[Lemma~5.1]{KLZ21}, these results show the claim.
\end{proof}
\end{lemma}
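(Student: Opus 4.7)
The plan is to reduce both parts of the lemma to first-moment computations, leveraging the barrier and ballot analysis of the radial process developed in \cite{KLZ21}.

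For \eqref{eqn:banana}, I would pick an intermediate scale $L = L(\ell)$ satisfying $\ell^{1/6} \leq L \ll \ell$, and argue in two steps. First, Theorem~3.1 of \cite{KLZ21} already tells us that any $v \in \cN_t$ with $R_t(v) - m_t(d) \geq -K$ satisfies, with high probability, the time-$L$ window condition $\sqrt{2} L - R_L(v) \in [L^{1/6}, L^{2/3}]$. Second, I would upgrade this into full membership in $\mathsf{ExtNorm}(t,\ell)$ at time $t-\ell$: if $u \in \cN_{t-\ell} \setminus \cN_{t-\ell}^{\cand}$ has a descendant $v \in \cN_t(u)$ with $R_t(v) \geq m_t(d) - K$, then either the ancestral norm path of $v$ exits the entropic envelope at some intermediate time in $[L, t-\ell]$, or its endpoint at time $t - \ell$ lies outside the good window. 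Decomposing on the first failure time, applying the Markov property, and using Gaussian tail estimates on the remaining radial displacement over the residual interval, a union bound over dyadic scales shows that the expected number of such bad pairs $(u, v)$ tends to $0$ as $\ell \to \infty$ uniformly in $t$. The quantitative ballot-type estimates of Lemma~4.3 and Lemma~5.1 of \cite{KLZ21} are the essential input making this bound summable.

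For \eqref{eqn:bounded-number-candidate}, I would again argue via first moment followed by Markov's inequality. By \eqref{eqn:candidate-endpoint}, a candidate $u$ at time $t-\ell$ has $R_{t-\ell}(u) = m_t(d) - O(\ell)$, so by the Markov property at $t-\ell$ and Claim~5.5 of \cite{KLZ21} (a Bramson-type upper tail for the maximum norm of a fresh BBM starting from an interior particle), the conditional expectation of the relevant count of descendants of $u$ is bounded by an explicit function of $\ell$ alone. Combined with an $O_\ell(1)$ bound on the expected number of candidate particles --- itself a consequence of the same ballot estimates that drove part~(1) --- the total first moment is uniformly bounded in $t$, and Markov's inequality produces the required $f(\ell)$.

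The main obstacle lies not in this lemma itself but in the barrier estimates it invokes. For one-dimensional BBM, ancestral paths are Brownian and classical ballot inequalities apply directly, but here the radial process $R_s$ is a Bessel-type process in dimension $d$, so quantitative control of its deviations from a straight barrier, and a matching upper bound showing that trajectories exiting the envelope produce negligibly few extremal descendants, requires the careful analysis in Lemma~4.3 and Lemma~5.1 of \cite{KLZ21}. Once those envelope estimates are in hand, the two Markov-reduction-plus-first-moment arguments sketched above are essentially routine.
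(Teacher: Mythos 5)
Your proposal is correct and follows essentially the same route as the paper, which simply defers to Theorem~3.1, Lemma~4.3, Lemma~5.1, and Claim~5.5 of \cite{KLZ21}: reduce to the time-$L$ window condition, control the entropic envelope and endpoint window via the ballot estimates of \cite{KLZ21}, and handle the particle count by a first-moment bound using the Markov property at time $t-\ell$. The extra first-moment scaffolding you supply is consistent with (and slightly more explicit than) the paper's citation-style argument.
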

\cref{eqn:bounded-number-candidate} demonstrates the advantage of restricting to $\cN_{t-\ell}^{\cand}$: to study the extremal particles, it is sufficient to consider $f(\ell)$ particles instead of $|\cN_t| \approx e^t$ particles. We apply this in the following. 

\begin{lemma}\label{lem:radius-of-cluster}
For any $\ell >0$, there exists a function $g: \R_+\to \R_+$ such that 
\begin{align*}
    \lim_{\ell\to\infty}\liminf_{t\to\infty} \P\Big(\sup_{u \in \cN_{t-\ell}^{\cand}} \sup_{v\in \cN_t(u)} \| \B_t(v) - \B_{t-\ell}(v) \| \leq g(\ell) \Big) = 1\,.
\end{align*}
\begin{proof}
For $g(\ell)$ sufficiently large compared to $f(\ell)$, this is an immediate consequence of \cref{eqn:bounded-number-candidate}, a union bound over at most $f(\ell)$ particles, and a Gaussian tail bound on $\| \B_t(v) - \B_{t-\ell}(v) \|$.
\end{proof}
\end{lemma}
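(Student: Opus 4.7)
The plan is to combine the high-probability control on the number of relevant particles provided by \eqref{eqn:bounded-number-candidate} with a standard Gaussian tail estimate and a union bound.

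First, I would work on the high-probability event $\cA_{t,\ell}$ from \eqref{eqn:bounded-number-candidate} that the total number of particles $v \in \cN_t$ descending from some $u \in \cN_{t-\ell}^{\cand}$ is at most $f(\ell)$. Since $\P(\cA_{t,\ell}) \to 1$ as first $t \to \infty$, then $\ell \to \infty$, it suffices to bound the supremum on the event $\cA_{t,\ell}$.

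Next, for any such $v$, conditionally on the genealogical tree of the BBM the vector $\B_t(v) - \B_{t-\ell}(v)$ is simply the increment over $[t-\ell, t]$ of the Brownian trajectory associated to $v$, hence it has distribution $\cN(0, \ell I_d)$. A standard Gaussian tail bound therefore yields
\[
\P\big( \|\B_t(v) - \B_{t-\ell}(v)\| > g(\ell) \,\big|\, \text{genealogy}\big) \leq C_d \exp\big(-g(\ell)^2/(4\ell)\big),
\]
valid for $g(\ell) \geq \sqrt{d\ell}$, where $C_d$ depends only on $d$. Taking (conditional) expectations and applying a union bound over the at most $f(\ell)$ descendants available on $\cA_{t,\ell}$ gives
\[
\P\Big( \sup_{u \in \cN_{t-\ell}^{\cand}}\sup_{v \in \cN_t(u)} \|\B_t(v) - \B_{t-\ell}(v)\| > g(\ell),\, \cA_{t,\ell} \Big) \leq C_d \, f(\ell) \exp\big(-g(\ell)^2/(4\ell)\big).
\]

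Finally, I would choose $g(\ell) := 3\sqrt{\ell(1+\log f(\ell))}$, which ensures the right-hand side above tends to $0$ as $\ell \to \infty$; combined with $\P(\cA_{t,\ell}^c) \to 0$, this completes the proof. The argument is essentially routine, and there is no real obstacle. The only small subtlety is the interaction between the random cardinality on $\cA_{t,\ell}$ and the Gaussian tail bounds for the individual increments, which is why conditioning on the genealogy in the intermediate step above is convenient.
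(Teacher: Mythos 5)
Your proposal is correct and is exactly the argument the paper gives (the paper's proof is a one-line appeal to \eqref{eqn:bounded-number-candidate}, a union bound over at most $f(\ell)$ particles, and a Gaussian tail bound); your write-up just fills in the routine details, including the correct observation that conditionally on the genealogy the increment $\B_t(v)-\B_{t-\ell}(v)$ is $\cN(0,\ell I_d)$. The only microscopic caveat is that your specific choice $g(\ell)=3\sqrt{\ell(1+\log f(\ell))}$ makes the error bound vanish only if $f(\ell)\to\infty$; replacing it by, say, $3\sqrt{\ell(\ell+\log f(\ell))}$ removes even that issue.
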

Recall that for any $u\in\cN_{t-\ell}$ and $v\in \cN_t(u)$,  we have $\B_{t-\ell}(v) = \B_{t-\ell}(u)$.
\cref{lem:radius-of-cluster} states that, for all $u\in \cN_{t-\ell}^{\cand}$, the radial spread of its time-$t$ descendants $\cN_t(u)$ is $O(1)$ with high probability. This has two useful consequences. First, in light of \cref{eqn:candidate-endpoint},  candidate particles are at distance $\sqrt2t(1+o(1))$ from the origin at time $t-\ell$, and thus the angular spread of the time-$t$ descendents of a candidate particle is $O(t^{-1})$; this is the content of \cref{cor:angular-difference}.

\begin{corollary}\label{cor:angular-difference}
For $g(\ell)$ as in \cref{lem:radius-of-cluster}, we have
\begin{align*}
    \lim_{\ell\to\infty} \liminf_{t\to\infty} \P\big(\sup_{u \in \cN_{t-\ell}^{\cand}} \sup_{v\in \cN_t(u)} \| \theta_t(v) - \theta_{t-\ell}(u) \| \leq g(\ell)t^{-1} \big) = 1\,.
\end{align*}
\begin{proof}
This follows from \cref{eqn:candidate-endpoint}, \cref{lem:radius-of-cluster}, and the formula $\arctan(x)/x\to 1$ for $x\to 0$.
\end{proof}
\end{corollary}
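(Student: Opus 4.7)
The plan is to reduce the problem to an elementary geometric estimate fed by two probabilistic inputs already at hand. First I would condition on the high-probability event $\mathcal{A}_{t,\ell}$ from \cref{lem:radius-of-cluster} on which $\|\B_t(v) - \B_{t-\ell}(u)\| \leq g(\ell)$ for every $u \in \cN_{t-\ell}^{\cand}$ and every $v \in \cN_t(u)$; recall that $\B_{t-\ell}(v) = \B_{t-\ell}(u)$ since $v$ is a descendant of $u$. On this event, write $\B_t(v) = \B_{t-\ell}(u) + \Delta$ with $\|\Delta\| \leq g(\ell)$.

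Next I would use the defining property \eqref{eqn:candidate-endpoint} of $\mathsf{ExtNorm}(t,\ell)$ applied to the norm process of $u \in \cN_{t-\ell}^{\cand}$: this gives $R_{t-\ell}(u) = m_t(d) + O(\ell)$, so in particular $\|\B_{t-\ell}(u)\| \geq \sqrt 2\, t / 2$ once $t$ is large enough (uniformly in $u$ and on $\mathcal{A}_{t,\ell}$). Combined with $\|\Delta\| \leq g(\ell)$, this forces $\|\B_t(v)\| \geq \sqrt 2\, t/2 - g(\ell) \geq \sqrt 2\, t /3$ for large $t$ as well.

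The rest is the elementary geometric fact that if $a,b \in \R^d \setminus \{0\}$ with $\|a\|, \|b\| \geq M$ and $\|a - b\| \leq g$, then
\begin{equation*}
    \Big\| \frac{a}{\|a\|} - \frac{b}{\|b\|} \Big\| \leq \frac{\|a - b\|}{\|a\|} + \frac{|\|a\| - \|b\||}{\|b\|} \leq \frac{2g}{M}\,.
\end{equation*}
Applying this with $a = \B_{t-\ell}(u)$, $b = \B_t(v)$, $M = \sqrt 2\, t/3$, and $g = g(\ell)$ yields $\|\theta_t(v) - \theta_{t-\ell}(u)\| \leq 6 g(\ell)/(\sqrt 2\, t)$ uniformly on $\mathcal{A}_{t,\ell}$. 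Replacing $g(\ell)$ by (say) $6 g(\ell)/\sqrt 2$ in the statement, or equivalently absorbing the constant into $g$, concludes the proof after taking $\liminf_{t\to\infty}$ on $\P(\mathcal{A}_{t,\ell}) \to 1$ from \cref{lem:radius-of-cluster}, then $\ell \to \infty$.

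There is no serious obstacle: the arctangent remark in the authors' one-line proof is exactly the observation that the angular deviation between two large vectors differing by $O(1)$ in Euclidean norm is $O(1/t)$, which I am implementing via the explicit Lipschitz bound on $x \mapsto x/\|x\|$ away from the origin. The only care needed is to make sure that all estimates are uniform over the (random but with high probability bounded in number by $f(\ell)$) set of candidate particles, which follows automatically from the supremum already being controlled on $\mathcal{A}_{t,\ell}$.
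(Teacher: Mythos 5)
Your proposal is correct and follows essentially the same route as the paper's one-line proof: it combines the endpoint control \eqref{eqn:candidate-endpoint} with the $O(g(\ell))$ radial spread from \cref{lem:radius-of-cluster}, and quantifies the resulting $O(1/t)$ angular deviation via an explicit Lipschitz bound on $x\mapsto x/\|x\|$ in place of the paper's arctangent remark. The only (correctly flagged) cosmetic point is absorbing the constant $6/\sqrt2$ into $g(\ell)$, which is harmless since $g$ need only be sufficiently large.
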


The second consequence of \cref{lem:radius-of-cluster} is \cref{cor:one-clan}: for every $\ell$-clan leader $u_t^*$ (see \eqref{eq-lclanleader}) that is extremal, every particle that lies within $O(1)$ distance of $\B_t(u_t^*)$ must be a member of the $\ell$-clan $[u_t^*]_{\ell}$, with high probability.

\begin{corollary}\label{cor:one-clan}
For any $K_1, K_2>0$, we have 
\[
    \lim_{\ell\to\infty} \limsup_{t\to\infty} \P \big( 
\exists u \in \cN_{t-\ell}^{\cand} \,,\, \exists v \in \cN_t \setminus \cN_t(u) : R_t(u_t^*) - m_t(d) \geq -K_1\,,\,  \norm{\B_t(u_t^*) - \B_t(v)} \leq K_2 \big) = 0\,.
\]
\begin{proof}
Fix positive constants $K_1$ and $K_2$, as well as $u \in \cN_{t-\ell}^{\cand}$. Note that 
\[
    \{ R_t(u_t^*) - m_t(d)\geq - K_1 \,,\,  \norm{\B_t(u_t^*) -\B_t(v)} \leq K_2 \} \subset \{ R_t(v) - m_t(d) \geq -K_1-K_2 \}\,.
\] 
Recall from \cref{def:E-hat} that $\hat{\cE}_{t,\ell}$ converges to $\hat{\cE}_{\infty}$ and the intensity measure $D_{\infty}(\theta)\sigma(\d \theta)$ of the second component of  $\hat{\cE}_{\infty}$ has no atoms. This implies that no two clan leaders (particles in $\Gamma_{t,\ell}$) of size comparable to $m_t(d)$ can have the same angle in the limit. In particular,
for $v \in \cN_t\setminus \cN_t(u)$ satisfying
\[
     R_t(v) - m_t(d) \geq -K_1-K_2 \quad \text{ and } \quad R_t(u_t^*) - m_t(d) \geq K_1 \,,
\]
we have $\|\theta_t(v) - \theta_t(u_t^*)\| \geq \epsilon$ with probability tending to $1$ as first $t\to\infty$, then $\ell\to\infty$ and $\epsilon\to 0$. \cref{cor:one-clan} then follows from \cref{cor:angular-difference}.
\end{proof}
\end{corollary}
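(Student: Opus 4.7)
The plan is to argue by contradiction via the Poisson-process structure of the limiting extremal point process $\hat{\cE}_\infty$ (cf.\ \cref{def:E-hat}). Suppose the event in the corollary holds for some $u \in \cN_{t-\ell}^{\cand}$ and $v \in \cN_t \setminus \cN_t(u)$. By the reverse triangle inequality,
\[
R_t(v) \geq R_t(u_t^*) - \|\B_t(u_t^*) - \B_t(v)\| \geq m_t(d) - K_1 - K_2\,,
\]
so $v$ is itself extremal with constant $K_1+K_2$. Applying \cref{lem:banana} with this constant, with probability tending to $1$ every such particle must descend from some candidate $u' \in \cN_{t-\ell}^{\cand}$; since $v \notin \cN_t(u)$, necessarily $u' \neq u$.

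Next I would use the spatial proximity of $u_t^*$ and $v$ to force angular proximity of their candidate ancestors. Since $\|\B_t(u_t^*) - \B_t(v)\| \leq K_2$ while both norms are of order $\sqrt{2}t(1+o(1))$, we get $\|\theta_t(u_t^*) - \theta_t(v)\| = O(K_2/t)$. Two applications of \cref{cor:angular-difference}---one for $u$ and one for $u'$---give $\|\theta_t(u_t^*) - \theta_{t-\ell}(u)\|$ and $\|\theta_t(v) - \theta_{t-\ell}(u')\|$ both equal to $O(g(\ell)/t)$. Moreover, $(u')_t^* \in \Gamma_{t,\ell}$ has norm at least $R_t(v) \geq m_t(d) - (K_1+K_2)$, and is distinct from $u_t^*$ because $\cN_t(u)$ and $\cN_t(u')$ are disjoint. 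A final application of \cref{cor:angular-difference} and the triangle inequality yield
\[
\|\theta_t(u_t^*) - \theta_t((u')_t^*)\| \leq \|\theta_t(u_t^*) - \theta_t(v)\| + \|\theta_t(v) - \theta_{t-\ell}(u')\| + \|\theta_{t-\ell}(u') - \theta_t((u')_t^*)\| = O(g(\ell)/t)\,,
\]
which vanishes as $t \to \infty$ with $\ell$ fixed.

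It remains to argue that the presence of two distinct extremal $\ell$-clan leaders at vanishing angular distance has vanishing probability in the limit. This is where the convergence $\hat{\cE}_{t,\ell} \Rightarrow \hat{\cE}_\infty$ enters: for any fixed $K \geq K_1 + K_2$ and $\epsilon > 0$, the probability that $\hat{\cE}_{t,\ell}$ has two atoms with norm coordinate $\geq -K$ within angular distance $\epsilon$ converges to the corresponding probability for $\hat{\cE}_\infty$. The latter tends to $0$ as $\epsilon \to 0$ since, conditional on $D_\infty$, the atoms of $\hat{\cE}_\infty$ with norm coordinate $\geq -K$ form an almost surely finite collection (by integrability of $e^{-\sqrt{2}x}$ on $[-K,\infty)$) whose angles are drawn independently from the non-atomic random measure $D_\infty(\theta)\sigma(\d\theta)$, hence are at strictly positive pairwise angular distance almost surely. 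Since $g(\ell)/t$ is eventually smaller than $\epsilon$, we reach the desired contradiction.

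The main obstacle is the ordering of limits: one takes $t\to\infty$ (to invoke convergence of $\hat{\cE}_{t,\ell}$ and drive $g(\ell)/t\to 0$), then $\ell\to\infty$ (to ensure all the high-probability events from \cref{lem:banana,lem:radius-of-cluster,cor:angular-difference} occur simultaneously), and finally $\epsilon\to 0$ on the limiting Poisson side. Once this bookkeeping is arranged, the various error terms vanish in the appropriate regime and the conclusion is immediate.
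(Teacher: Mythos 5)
Your proof is correct and follows essentially the same route as the paper's: the triangle inequality shows $v$ is itself extremal, spatial proximity forces $O(1/t)$ angular proximity, and the convergence of $\hat{\cE}_{t,\ell}$ to the Poisson limit $\hat{\cE}_{\infty}$ rules out two distinct extremal clan leaders at vanishing angular separation. The only difference is that you spell out the reduction to clan leaders (via \cref{lem:banana}) and the non-atomicity argument on the limit side, both of which the paper's terser proof leaves implicit.
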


The following lemma and its corollary allow us to view the norm of each particle as indistinguishable from its projection onto its angle at time $t-\ell$.
\begin{lemma}\label{lem:norm-same-t-ell}
There exists $h:\R_+\to\R_+$ such that
\begin{align*}
    \lim_{\ell \to\infty}\liminf_{t\to\infty} \P\Big(\sup_{u \in \cN_{t-\ell}^{\cand}} \sup_{v\in \cN_t(u)} \| R_t(v) - \B_t(v)\cdot\theta_{t-\ell}(u) \| \leq 
    h(\ell) t^{-1}\Big) = 1\,.
\end{align*}
\begin{proof}[Proof of \cref{lem:norm-same-t-ell}]
Fix $u \in \cN_{t-\ell}^{\cand}$ and $v\in \cN_t(u)$. 
The idea is simply that the motion of $\B_.(v)$ on $[t-\ell,t]$ in the plane transverse to $\theta_{t-\ell}(v)$ will have no impact on the size of the norm. 

Towards this end, recall the projection operator $P^{\perp}_{\theta} := \Id - \theta \theta^t$, for $\theta\in \S^{d-1}$. 
Define the increment $\bB_{\ell}(v) := \B_t(v) - \B_{t-\ell}(u)$ and recall $\B_{t-\ell}(u) = \B_{t-\ell}(v)$. Then we compute
\begin{align*}
    R_t(v) = \big( (\B_t(v)\cdot\theta_{t-\ell}(u))^2 + \|P_{\theta_{t-\ell}(u)}^{\perp}(\bB_{\ell}(v))\|^2 \big)^{1/2} 
    \leq \big( (\B_t(v)\cdot\theta_{t-\ell}(u))^2 + \|\bB_{\ell}(v)\|^2\big)^{1/2}\,.
\end{align*}
Now, the formula $R_{t-\ell}(u) = \B_{t-\ell}(u)\cdot \theta_{t-\ell}(u)$, \cref{eqn:candidate-endpoint}, and \cref{lem:radius-of-cluster} imply the existence of $h:\R_+\to\R_+$ such that, for all $u \in \cN_{t-\ell}^{\cand}$ and $v\in \cN_t(u)$,
\[ 
|\B_t(v) \cdot \theta_{t-\ell}(u) - m_t(d)| \leq h(\ell) \quad \text{ and } \quad \|\bB_{\ell}(u)\|\in [0, h(\ell)]
\]
hold with high probability  (i.e., probability tending to $1$ as $t\to\infty$). Then, with high probability,
\[
    R_t(v)\leq \B_t(v) \cdot \theta_{t-\ell}(u) \sqrt{1 + \tfrac{h(\ell)}{(\B_t(v) \cdot\theta_{t-\ell}(u))^2}} \leq \B_t(v) \cdot \theta_{t-\ell}(u) + h(\ell)t^{-1}
\]
for all $u \in \cN_{t-\ell}^{\cand}$ and $v\in \cN_t$. Since $R_t(v) \geq \B_t(v) \cdot\theta_{t-\ell}(u)$, this completes the proof.
\end{proof}
\end{lemma}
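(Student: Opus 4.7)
The plan is to exploit the Pythagorean decomposition of $\B_t(v)$ with respect to the unit vector $\theta_{t-\ell}(u)$. Writing $P_\theta^{\perp} := \Id - \theta\theta^t$ for the orthogonal projection onto $\theta^{\perp}$, we have
\[
    R_t(v) = \sqrt{(\B_t(v)\cdot \theta_{t-\ell}(u))^2 + \|P_{\theta_{t-\ell}(u)}^{\perp} \B_t(v)\|^2}\,,
\]
so that the quantity to bound is $\sqrt{a^2+b^2} - a$ with $a := \B_t(v)\cdot \theta_{t-\ell}(u)$ and $b := \|P_{\theta_{t-\ell}(u)}^{\perp} \B_t(v)\|$. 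Since $\sqrt{a^2+b^2} - a \leq b^2/(2a)$ whenever $a,b \geq 0$, it suffices, uniformly over $u \in \cN_{t-\ell}^{\cand}$ and $v \in \cN_t(u)$ on an event of probability tending to $1$, to control $b$ by a function of $\ell$ alone and to bound $a$ from below by something of order $t$.

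Both inputs are already in hand. Since $\B_{t-\ell}(v) = \B_{t-\ell}(u) = R_{t-\ell}(u)\,\theta_{t-\ell}(u)$ is parallel to $\theta_{t-\ell}(u)$, the orthogonal projection collapses onto the late-time increment, $P_{\theta_{t-\ell}(u)}^{\perp} \B_t(v) = P_{\theta_{t-\ell}(u)}^{\perp}(\B_t(v) - \B_{t-\ell}(v))$, whose norm is dominated by $\|\B_t(v) - \B_{t-\ell}(v)\|$; by \cref{lem:radius-of-cluster} this is at most $g(\ell)$ uniformly with high probability, so $b \leq g(\ell)$. For the denominator, \cref{eqn:candidate-endpoint} gives $R_{t-\ell}(u) = m_t(d) + O(\ell) = \sqrt{2}\,t(1+o(1))$, and combining this with the same increment bound yields $a = R_{t-\ell}(u) + O(g(\ell)) = \sqrt{2}\,t(1+o(1))$ on the same event. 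Plugging in,
\[
    R_t(v) - \B_t(v)\cdot \theta_{t-\ell}(u) \leq \frac{b^2}{2a} \leq \frac{g(\ell)^2}{\sqrt{2}\,t(1+o(1))}\,,
\]
so defining $h(\ell)$ as a constant multiple of $g(\ell)^2$ delivers the desired $h(\ell)/t$ upper bound. The matching lower bound is free: $R_t(v) = \|\B_t(v)\| \geq \B_t(v)\cdot \theta_{t-\ell}(u)$, and on our event the right-hand side is positive (being of order $t$), hence the absolute difference is controlled by the same $h(\ell)/t$.

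I do not anticipate a substantial obstacle: the whole argument is a geometric Taylor expansion around the almost-radial direction, and both of its ingredients (the $O(\ell)$ radial deviation for candidate particles at time $t-\ell$, and the $O(g(\ell))$ spatial diameter of the $[t-\ell,t]$ descendants of a candidate) have already been established. The one observation that makes the crucial $1/t$ factor appear, rather than merely a $g(\ell)$ bound, is that $\B_{t-\ell}(u)$ is itself aligned with $\theta_{t-\ell}(u)$, so the entire transverse component of $\B_t(v)$ comes from the late-time increment of order $g(\ell)$, and squaring it against the huge denominator $a$ of order $t$ produces the $g(\ell)^2/t$ gap claimed in the lemma.
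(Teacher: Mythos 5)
Your proof is correct and follows essentially the same route as the paper's: the Pythagorean decomposition with respect to $\theta_{t-\ell}(u)$, the observation that $\B_{t-\ell}(v)$ is aligned with $\theta_{t-\ell}(u)$ so the transverse component is only the $[t-\ell,t]$ increment controlled by \cref{lem:radius-of-cluster}, and the expansion $\sqrt{a^2+b^2}-a\le b^2/(2a)$ with $a$ of order $t$ via \cref{eqn:candidate-endpoint}. The paper phrases the last step as $\sqrt{1+x}\le 1+x/2$ and absorbs the square of the increment bound into the definition of $h$, but the argument is the same.
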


\begin{corollary}\label{cor:ut=utl} Recall $u_t^*$ from the genealogical notation in \cref{subsec:extremal-landscape}.
For any $\ell>0$,
\begin{align*}
\lim_{t\to\infty} \P( \forall u \in \cN_{t-\ell}^{\cand} : u_t^* = u_{t,\ell}^*) = 1\,.
\end{align*}
\begin{proof}[Proof of \cref{cor:ut=utl}]
On the event described in \cref{lem:norm-same-t-ell},  the 
 event $\{\exists u \in \cN_{t-\ell}^{\cand} : u_t^* \neq u_{t,\ell}^*\}$ implies that $R_t(u_t^*)- R_t(u_{t,\ell}) \in [0,h(\ell)t^{-1}]$ for some $u\in \cN_{t-\ell}^{\cand}$. Thus, it suffices to show the following quantity vanishes to $0$ as $t\to\infty$:
\begin{align*}
    \P\big(\exists u \in \cN_{t-\ell}^{\cand}, ~\exists v_1, v_2 \in \cN_t(u) : |R_t(v_1)-R_t(v_2)| \leq h(\ell)t^{-1} \big)
\end{align*}
This follows from a union bound and an upper bound on 
the probability that a  Bessel process at time $\ell$ lies in an arbitrary ball of radius $O(t^{-1})$.
\end{proof}
\end{corollary}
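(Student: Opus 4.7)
The plan is to reduce the statement to a quantitative density estimate for the difference of two norms among descendants of a candidate. First, I would work on the high-probability event $\cG$ on which both \cref{lem:norm-same-t-ell} and \cref{eqn:bounded-number-candidate} hold. On $\cG$, if some $u \in \cN_{t-\ell}^{\cand}$ has $u_t^* \neq u_{t,\ell}^*$, then writing $v_1 := u_t^*$ and $v_2 := u_{t,\ell}^*$, the maximality properties (norm for $v_1$, projection onto $\theta_{t-\ell}(u)$ for $v_2$) combined with $|R_t(v_i) - \B_t(v_i) \cdot \theta_{t-\ell}(u)| \leq h(\ell)/t$ force
\[
0 \leq R_t(v_1) - R_t(v_2) \leq 2h(\ell)/t.
\]
So it suffices to show that $\P\big(\cG,\ \exists u \in \cN_{t-\ell}^{\cand},\ \exists v_1 \neq v_2 \in \cN_t(u) : |R_t(v_1) - R_t(v_2)| \leq 2h(\ell)/t\big) \to 0$ as $t \to \infty$.

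Next, I would fix such a pair $(v_1, v_2)$ and condition on the genealogy of $\cN_t(u)$, letting $t - s$ denote their split time (so $s \in (0, \ell]$) and $z := \B_{t-s}(v_1) = \B_{t-s}(v_2)$. By \cref{eqn:candidate-endpoint} and \cref{lem:radius-of-cluster}, $\|z\| = \sqrt 2\, t(1 + o(1))$. Given the trajectory of $v_1$ (and hence $R_t(v_1)$), we have $\B_t(v_2) = z + X_2$ with $X_2 \sim \mathcal N(0, s\, \Id)$ independent of $v_1$. Since $\|z\| \gg \sqrt s$, a direct expansion shows that $\|z + X_2\|$ has density at most $C/\sqrt{s}$ uniformly on $\R$ (the leading-order contribution coming from the projection $X_2 \cdot \arg(z)$). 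Hence
\[
\P\big(|R_t(v_1) - R_t(v_2)| \leq 2h(\ell)/t \,\big|\, v_1,\, s\big) \leq C h(\ell)/(t\sqrt{s}).
\]
Taking expectation over $s$, and using that the joint density of split times in a Yule tree with at most $f(\ell)$ leaves is uniformly bounded, the integral $\int_0^\ell \min(1, C h(\ell)/(t\sqrt{s}))\, ds$ splits at $s = h(\ell)^2/t^2$ into two pieces of sizes $O(h(\ell)^2/t^2)$ and $O(h(\ell)\sqrt{\ell}/t)$, both of which are $O_\ell(1/t)$. Since \cref{eqn:bounded-number-candidate} gives at most $\binom{f(\ell)}{2}$ pairs on $\cG$, a union bound concludes.

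The main delicate step is the small-$s$ regime, where the Gaussian density diverges like $1/\sqrt{s}$ and at first looks fatal. But $\min(1, C h(\ell)/(t\sqrt{s}))$ is integrable near $0$, and the tiny-$s$ contribution is dominated by the probability that $s$ lies below the scale $h(\ell)^2/t^2$. The bulk of the bound comes from $s$ of order $\ell$, where the Gaussian density is well-behaved, and it is the uniform boundedness of the split-time density on $[0,\ell]$ (conditional on a tree with $O_\ell(1)$ leaves) that closes the argument.
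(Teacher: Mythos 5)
Your proposal is correct and follows essentially the same route as the paper: reduce, via \cref{lem:norm-same-t-ell}, to showing that no two distinct descendants $v_1,v_2\in\cN_t(u)$ of a candidate have norms within $O_\ell(t^{-1})$ of each other, then conclude by a union bound over the $O(f(\ell)^2)$ pairs afforded by \cref{eqn:bounded-number-candidate} together with an anti-concentration (density) bound for the norm. The one place where you go beyond the paper's one-line justification is worthwhile: the paper simply invokes a density bound for ``a Bessel process at time $\ell$,'' whereas the two particles share their path up to a split time $t-s$ with $s$ possibly tiny, so the relevant density is really $O(1/\sqrt{s})$; your conditioning on the split time and the integration $\int_0^\ell \min(1, Ch(\ell)/(t\sqrt{s}))\,\d s = O_\ell(t^{-1})$ (using boundedness of the split-time density given a tree with $O_\ell(1)$ leaves) is exactly the detail needed to make that step airtight.
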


With these inputs, we can finish the proof of \cref{prop:reduction-t-ell}.

\begin{proof}[Proof of \cref{prop:reduction-t-ell}(2)]
\cref{cor:ut=utl} allows us to replace all instances of  $u_{t,\ell}^*$ in $\cE_{t,\ell}^{\good}$ with $u_t^*$, without changing the weak limit.
\cref{lem:norm-same-t-ell} allows us to replace  $\B_t(u_t^*)\cdot \theta_{t-\ell}(u)$ in the first coordinate with $R_t(u_t^*)$. 
\cref{cor:angular-difference} allows us to replace $\theta_{t-\ell}(u)$ in the second coordinate of  $\cE_{t,\ell}^{\good}$ with $\theta_t(u_t^*)$.
\cref{lem:radius-of-cluster} and \cref{cor:angular-difference} together allow us to replace the rotation $\cR_{\theta_{t-\ell}(u)}$ in the third coordinate of $\cE_{t,\ell}^{\good}$ with $\cR_{\theta_t(u_t^*)}$.
\cref{cor:one-clan} allows us to replace the sum over $v\in \cN_t(u)$ in the third coordinate of $\cE_{t,\ell}^{\good}$ with a sum over $v \in \cN_t$. Finally, \cref{eqn:banana} allows us to replace the sum over $u\in \cN_{t-\ell}^{\cand}$ in $\cE_{t,\ell}^{\good}$ with a sum over $u\in \Gamma_{t,\ell}$. The resulting point process after all of these replacements is exactly $\cE_{t,\ell}^{\land}$.
\end{proof}

\section{Proof of \cref{thm:extremal-front}}
\label{sec:pf-main-thm}
In the statement of \cref{thm:extremal-front}, $L$ is taken to infinity  after
$t$. Therefore, it suffices to compute the scaling limit of the front of the limiting extremal cluster $\mathcal{C}$ in lieu of the front of $\mathcal{C}_t$.
\begin{maintheorem}\label{thm:extremal-front-C}
Fix $d\geq 2$ and $\ep \in (0,1)$, and recall $\rho(s,\theta)$ from \cref{thm:extremal-front}. We have the following weak convergence on $D([0,T) \times \S^{d-2})$ equipped with the $\mathrm{sup}$ norm, for any $T>0$, as $L\to\infty$:
\begin{align*}
  \big(  8^{-\frac{1}4}L^{-\frac{3}2} \mathfrak{h}_{\cC}^{\ep}(sL,\theta) \big)_{s \in [0,\infty),\theta \in \S^{d-2}} \Rightarrow \big( \rho(s,\theta) \big)_{s\in [0,\infty),\theta \in \S^{d-2}}\,.
\end{align*}
\end{maintheorem}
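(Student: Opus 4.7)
The plan is to decompose the front into contributions from the BBM point clouds $\mathfrak{B}^i$ in the construction of $\cC$ and identify the dominant branching-time scale. A first-moment computation via the many-to-one formula, combined with the $E_i$ conditioning, shows that the branching times $\tau_i$ contributing particles to the slab $\{x\in(-sL,-sL+1]\}$ concentrate at $\tau_i\sim L^2$; we therefore parametrize $\tau=\sigma L^2$ and reduce $\sigma$ to a compact subset of $(0,\infty)$ by controlling the contributions as $\sigma\to 0^+$ and $\sigma\to\infty$. Using Brownian scaling and the Bessel(3) asymptotics for the spine,
\[
  \bigl(A_{\sigma L^2}(\xi)/L\bigr)_{\sigma\geq 0} \;\Rightarrow\; (R_\sigma)_{\sigma\geq 0}
\]
as $L\to\infty$, where $R$ is a standard Bessel(3) process, and we then work conditionally on this rescaled spine.

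Given the rescaled spine, the total number of particles landing in the slab from branchings near time $\sigma L^2$ scales, via a first-moment analysis combining the spine-dependent intensity of $\pi$ and the $E_i$-conditional BBM decoration density, as $\exp(\sqrt 2\,L(s-R_\sigma)+o(L))$ (per unit interval of $\sigma$). The transverse component $\Y^*_\tau(v)$ of each such particle is, conditionally on the radial data and the genealogy, a centered Gaussian of variance $\tau=\sigma L^2$. An application of G\"artner's theorem to the transverse $(d-1)$-dimensional BBM restricted to this sub-population yields
\[
  \max_v |\theta\cdot\Y^*_\tau(v)| \;\approx\; L^{3/2}\sqrt{2\sqrt 2\,\sigma(s-R_\sigma)},
\]
for $v$ meeting the $\ep$-cone constraint---a constraint that is asymptotically immaterial, since at the $L^{3/2}$-scale the transverse directions are effectively uniform. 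Taking the supremum over $\sigma\geq 0$ and using $(2\sqrt 2)^{1/2}=8^{1/4}$ gives $8^{-1/4}L^{-3/2}\mathfrak{h}_{\cC}^\ep(sL,\theta)\to \rho(s)$.

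Finite-dimensional convergence follows by running the above argument jointly at finitely many $(s_j,\theta_j)$; the limit does not depend on $\theta$ because the transverse components are asymptotically isotropic. Tightness in the Skorokhod space with the sup norm is established via uniform G\"artner-type bounds applied to unions of neighboring slabs, combined with the H\"older continuity of Bessel(3) paths. The main technical obstacle is the double concentration statement---for the count of particles in the $x$-slab and for the resulting transverse maximum---under the $E_i$ conditioning. Since $\P(E_i)$ is exponentially small and substantially deforms the BBM profile, a naive second-moment estimate does not suffice; our proof instead uses G\"artner's theorem to extract sharp exponential-scale concentration uniformly over the spine trajectory, with delicate care required at the boundaries $\sigma\to 0^+$ and $\sigma\to\infty$.
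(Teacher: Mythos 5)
Your overall architecture matches the paper's: decompose $\mathfrak{h}_{\cC}^{\ep}$ over the branching times $\tau_i$, identify $\tau_i\sim L^2$ as the dominant scale, rescale the spine to a Bessel(3) process, and extract the variational formula with the constant $8^{1/4}=(2\sqrt2)^{1/2}$. The paper implements this as \cref{prop:sup-norm} (the front is within $o(1)$ in sup-norm of the deterministic spine functional $X_L$) plus \cref{prop:X_L-rho-convergence}. However, your central step has a genuine gap, and your diagnosis of the technical obstacle is off.

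First, the obstacle you name is not the real one. You assert that $\P(E_i)$ is exponentially small and ``substantially deforms the BBM profile.'' In fact $E_i=\{\max_v X^*_{\tau_i}(v)<\sqrt2\tau_i+\bA_{\tau_i}(\xi)\}$ asks the 1D maximum to stay \emph{below} a level roughly $\sqrt{\tau_i}$ above its median, so $\P(E_i)\to1$ exponentially fast in $\tau_i^{1/2-\nu}$; for the relevant times $\tau_i\geq L^{1.4}$ the conditioning is asymptotically trivial and can simply be dropped (\cref{lem:no-conditioned-clans}). Early times, where the conditioning could matter, are killed by a crude bound on the transverse spread (\cref{lem:earlytimes}), with no need to analyze $E_i$ at all.

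Second, and more seriously, your mechanism for the lower bound on the transverse maximum does not work as stated. You propose applying G\"artner's theorem ``to the transverse $(d-1)$-dimensional BBM restricted to this sub-population'' of particles landing in the $x$-slab. That sub-population is not a BBM (it is selected by a radial condition correlated with the genealogy), so G\"artner's theorem does not apply to it; and a G\"artner front for a genuine $(d-1)$-dimensional BBM run for time $\sigma L^2$ sits at radius $\sim\sqrt2\sigma L^2$, not $L^{3/2}$. Making the max-of-$e^{\sqrt2L(s-R_\sigma)}$-correlated-Gaussians heuristic rigorous for this sub-population would require exactly the truncated second-moment machinery you say you are avoiding. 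The paper's resolution is different and is the key idea you are missing: apply G\"artner's theorem to the \emph{full $d$-dimensional} cloud $\cN^i_{\tau_i}$, targeting a ball of radius $1/4$ around the point $\mathbf z_{\tau_i}(s,\theta)$ that simultaneously has the slab $x$-coordinate, the prescribed angle, and norm $m^G_{\tau_i}(d)-r$ --- a point lying on the cloud's front, where G\"artner guarantees a hit with probability $\geq 1/4$; independence over the $\asymp L^{2-\iota}$ clouds born in a short interval of branching times then gives a stretched-exponentially small failure probability (\cref{cor:gartner}), and the $L^{3/2}\sqrt{2\sqrt2\sigma(s-R_\sigma)}$ transverse displacement follows from the Pythagorean theorem, not from a transverse extreme-value analysis. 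The matching upper bound is likewise geometric: no particle of the cloud exceeds norm $\sqrt2\tau_i+(\log L)^2$, and Pythagoras again. Without this (or an equivalent substitute), the concentration of $\mathfrak{h}^{\ep}_{\cC}(sL,\theta)$ around $8^{1/4}L^{3/2}X_L(s)$ is not established.
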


\begin{proof}[Proof of \cref{thm:extremal-front}]
\cref{thm:extremal-front} is a consequence of \cref{thm:extremal-landscape} and \cref{thm:extremal-front-C}.
\end{proof}

In \cref{subsec:A-pi}, we supply the full definitions of $A_.(\xi)$ and $\pi$ that were omitted from the description of $\cC$ from \cref{subsec:extremallimitcluster}. 
In \cref{subsec:proof-thm-extremalfrontC},  we prove \cref{thm:extremal-front-C} as an immediate consequence of two results: \cref{prop:sup-norm,prop:X_L-rho-convergence}. \cref{prop:sup-norm} is proved in \cref{subsec:pf-prop-sup-norm} by further dividing the result into \cref{lem:h-to-hbar,lem:hbar-convergence}. The proof of these lemmas
forms the technical heart of the article, and appears in Sections \ref{sec:simplifications} and \ref{sec:proof-hbar-convergence},
respectively. \cref{prop:X_L-rho-convergence} is proved in \cref{sec:spine}.

\subsection{The trajectory of the spine particle and the branching times} \label{subsec:A-pi}
The spine trajectory $A_.(\xi)$ and the Poisson point process of branching times $\pi$ come from the description by \cite{ABBS12} of the one-dimensional decoration point process $\cD$. 
We construct all objects  here and in \cref{subsec:extremallimitcluster} on a global probability space, denoted by $(\Omega, \cF, \P)$.
 
Define a family of processes indexed by the real parameter $b >0$ as follows. Let $R_.$ be a Bessel(3) process with $R_0 = 0$, and let $B_.$ be an independent standard Brownian motion in $\R$. Define $T_b^B := \inf \{t\geq 0 : B_t = b\}$,  the first hitting time of $b$ by $B_.$. Then, for each $b>0$,  define 
        \begin{equation} \label{def:Gamma}
            \Gamma_s^{(b)} := \begin{cases}
                B_s \,, &\text{for } s \in [0, T_b^B]\,, \\
                b- R_{s-T_b^B} &\text{for } s\geq T_b^B\,.
            \end{cases}
        \end{equation}
Let $M_r$ denote an independent random variable having the law of the maximum of a standard one-dimensional BBM at time $r$.
Define 
    \[
        G_r(x) := \P(M_r \geq \sqrt{2}r - x/\sqrt{2}) \,.
    \]
The law of $A_.(\xi)$ is given by the following formula\footnote{On page 411 of \cite{ABBS12}, this law appears
with an errant pre-factor of $\sigma := \sqrt2$ in front of the $\sup_{s\geq 0}(A_s+\sqrt2 s)$ term. This would imply that the supremum of $A_s+\sqrt2s$ is analogous to the the supremum of $\Gamma_s^{(b)}-\sqrt{2}s$ divided by $\sqrt2$, which easily can be seen to yield a contradiction.}: for any measurable  $E \subset C([0,\infty))$ and $b\geq 0$, 
        \begin{align}
            \P\big(A_.(\xi) \in E \,,\, \sup_{s\geq 0} \, (A_s(\xi)+\sqrt2 s) \in \d b\big)
            = \frac{1}{c_1} \E\Big[e^{-2 \int_0^{\infty} G_r(\sqrt2 \Gamma_r^{(b)}) \d r} \ind{(\Gamma_s^{(b)}- \sqrt2 s)_{s\geq 0} \in E} \Big]\,, \label{eqn:law-A}
        \end{align}
        where 
        \[ 
            c_1:= \int_0^{\infty} \E[e^{-2 \int_0^{\infty} G_r(\sqrt2 \Gamma_r^{(b)}) \d r}] \d b \,.
        \]
        The constant $c_1$ was shown to be finite in \cite[Equation~(6.7)]{ABBS12}. Note that 
        \[
            A^*(\xi) := \sup_{s\geq 0} \, (A_s(\xi)+\sqrt2 s)
        \]
        is a random variable taking values in $(0,\infty)$ with density
        \begin{align}
            \P( A^*(\xi) \in \d b) = f_{A^*(\xi)}(b)\d b  \,, \quad \text{ where } \quad f_{A^*(\xi)}(b) := \frac{1}{c_1}\E\Big[e^{-2 \int_0^{\infty} G_r(\sqrt2 \Gamma_r^{(b)}) \d r} \Big] \,. 
            \label{eqn:max-density}
        \end{align}
Finally, conditionally on $A_.(\xi)$, the intensity measure of  $\pi$ is 
    $
    2 \P_{A_t}(M_t <0) \d t
    $, for $t\geq 0$. 

\begin{remark} \label{rk:spine-bessel}
We will show that, as a consequence of exponential right-tail decay of $M_r$ (\cref{eqn:mallein}) and the Bessel behavior of $\Gamma^{(b)}$ (which goes to $-\infty$ at square-root speed), the Radon-Nikodym derivative in \eqref{eqn:law-A} has essentially no effect after sufficiently long time. In particular, $A_.(\xi)$ conditioned on $\{\sup_{s\geq 0} A_s(\xi) + \sqrt2s \in \d b\}$ behaves like a negative Bessel(3) process with drift $-\sqrt2$ after some time (this is made rigorous in  \cref{lem:spine-bessel-convergence} and its proof).
\end{remark}

\begin{remark}
The explicit formula for the intensity of $\pi$ is inessential to our analysis: we will only use that the intensity is bounded away from $0$ and above by $2$.
\end{remark}

\subsection{Proof of \cref{thm:extremal-front-C}}
\label{subsec:proof-thm-extremalfrontC}
Throughout the rest of this paper, we fix $\ep \in (0,1)$ and a time-horizon $T>0$. We also condition on $\{A^*(\xi) \in \d b\}$: due to the bounded convergence theorem, it suffices to prove convergence of finite dimensional distributions  and tightness on $[0,T]$ conditioned on this event.
Define
\begin{align}
    \Pb (\cdot) := \P( \cdot \given A^*(\xi) \in \d b)\,, \label{def:Pb} 
\end{align}
where we recall the global probability space $(\Omega,\cF, \P)$ from \cref{subsec:A-pi}.
Recall $c_1$ and $f_{A^*(\xi)}(b)$ from \cref{eqn:max-density}, and define $Z(b):= c_1 f_{A^*(\xi)}(b)$. 
Define the process   
\begin{equation}\label{eqn:conditioned-law-A}
    \hat{A}_s(\xi) := -A_s(\xi) - \sqrt{2}s\,,\, \quad 
    \Pb(\bA_.(\xi) \in E) = \frac{1}{Z(b)}  \E\Big[e^{-2 \int_0^{\infty} G_r(\sqrt2 \Gamma_r^{(b)}) \d r} \ind{(-\Gamma_s^{(b)})_{s\geq 0} \in E} \Big]\,.
\end{equation}
\cref{prop:sup-norm} states that $\h$ is essentially a deterministic functional of the spinal trajectory $\hat{A}(\xi)$, while \cref{prop:X_L-rho-convergence} states that this functional converges to the limit process $\rho(s)$ from \eqref{eq-rho}. 
\begin{proposition}\label{prop:sup-norm}
    Define the process
    \begin{equation}
    \label{eq-XLdef}
    X_L(s):= \Big(\sup_{\sigma \geq0} (\sigma s - \sigma \frac{\bA_{\sigma L^2}(\xi)}{L})\Big)^{\frac{1}2}\,.
  \end{equation}
    There exists a coupling of $\h$ and $X_L$ (with respect to their laws under $\Pb$) such that 
    \[
    \lim_{L\to\infty}\sup_{s\in[0,T],\theta \in \S^{d-2}}\|8^{-\frac14}L^{-\frac32}\h(sL,\theta) - X_L(s)\| = 0 \text{ in probability.}
    \]
\end{proposition}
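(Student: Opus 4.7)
Since both $\h(sL,\theta)$ and $X_L(s)$ are defined on the common probability space $(\Omega,\cF,\Pb)$, with $X_L$ a functional of only the spine $\bA_\cdot(\xi)$ while $\h$ depends additionally on the Poisson branching times $\pi$ and the BBM point clouds $\mathfrak{B}^i$, the asserted coupling is the natural one, and I need only show that, conditionally on the spine, $\h$ concentrates around $8^{1/4}L^{3/2} X_L$. My plan mirrors the paper's two-step split: first (\cref{lem:h-to-hbar}) replace $\h$ by an intermediate quantity $\hs$ given by a deterministic first-moment description of the maximal transverse norm given the spine; then (\cref{lem:hbar-convergence}) carry out a variational calculation to show $\hs \to 8^{1/4} L^{3/2} X_L(s)$.

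\textbf{First-moment heuristic.} Using $A_{\tau_i}(\xi) = -\sqrt 2 \tau_i - \bA_{\tau_i}(\xi)$, a particle $v$ in the cloud born at $\tau_i$ contributes to $\h(sL,\theta)$ precisely when $X^*_{\tau_i}(v) \in (\sqrt 2 \tau_i - y,\, \sqrt 2 \tau_i - y + 1]$ with $y := sL - \bA_{\tau_i}(\xi)$, and $|\theta \cdot \arg(\Y_{\tau_i}(\xi) + \Y^*_{\tau_i}(v))| \geq 1-\ep$. Writing $\tau_i = \sigma L^2$, so $y = L(s - \bA_{\sigma L^2}(\xi)/L)$, which is of order $L$ in the relevant regime $s > \bA_{\sigma L^2}(\xi)/L$, an application of G\"artner's theorem (as flagged in the introduction) to the pair (radial, transverse norm) yields that the expected number of contributing particles with $\|\Y^*_{\tau_i}(v)\|$ near $h$ is of order $\tau_i^{-d/2}\exp(\sqrt 2\, y - h^2/(2\tau_i))$. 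Setting this to $O(1)$ locates the maximal transverse norm in the cloud at
\[
h \;\approx\; \sqrt{2\sqrt 2\,\tau_i\, y} \;=\; 2^{3/4}\sqrt{\tau_i\,y} \;=\; 8^{1/4} L^{3/2} \sqrt{\sigma s - \sigma \bA_{\sigma L^2}(\xi)/L}.
\]
The transverse spine shift $\Y_{\tau_i}(\xi) = O(\sqrt{\tau_i}) = O(L\sqrt\sigma)$ is $o(L^{3/2})$, and by isotropy of the Gaussian transverse law the angular restriction $|\theta \cdot \arg| \geq 1-\ep$ only reduces the effective count by a constant factor, which perturbs $h$ at sub-leading order; in particular $\h(sL,\theta)$ is asymptotically independent of $\theta$, matching $\rho(s,\theta) = \rho(s)$. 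Since the intensity of $\pi$ is bounded between positive constants and the branching times densely fill $[0,\infty)$ at scale $L^{-2}$, the supremum over $\tau_i$ recovers, to leading order, $8^{1/4} L^{3/2}\sup_{\sigma \geq 0}\sqrt{\sigma s - \sigma \bA_{\sigma L^2}(\xi)/L} = 8^{1/4} L^{3/2} X_L(s)$, as required.

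\textbf{Main obstacle and auxiliary issues.} The technical heart is the concentration of the maximum transverse norm within a single cloud, since the $\Y^*_{\tau_i}(v)$ for different $v$'s share genealogy and are not independent. The upper bound is a Markov-type first-moment statement via G\"artner's theorem, sidestepping the second moment method as the introduction emphasizes. The lower bound requires exhibiting a family of particles whose transverse displacements are approximately independent, typically by restricting to particles whose pairwise most-recent common ancestors lie sufficiently early in $[0,\tau_i]$, so that their near-disjoint final segments dominate both $X^*$ and $\Y^*$. Uniformity in $s \in [0,T]$ should then follow from monotonicity of $\h$ in $s$ combined with continuity of $\sigma \mapsto \bA_{\sigma L^2}(\xi)/L$ (close to a Bessel(3) process by \cref{rk:spine-bessel}), via a standard chaining argument; $\theta$-uniformity follows from the asymptotic $\theta$-independence noted above. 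Finally, the boundary regimes $\sigma \to 0$ (where $y/\sqrt{\tau_i}$ may be too large for G\"artner's regime) and $\sigma$ large (where $y < 0$ and no particles contribute) must be truncated at deterministic cutoffs and their residual contribution controlled by crude first-moment estimates.
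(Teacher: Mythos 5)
Your decomposition (first reduce $\h$ to a simplified front, then do the variational/Pythagorean calculation) and your first-moment heuristic locating the maximal transverse norm at $h\approx\sqrt{2\sqrt2\,\tau_i\,y}$ are exactly right and match the paper's outline. The gap is in how you propose to make the lower bound rigorous. You identify "concentration of the maximum transverse norm within a single cloud" as the technical heart and propose a second-moment-style argument (extracting approximately independent particles by restricting to early branch points). That is precisely the route the paper deliberately avoids, and you have not carried it out: controlling the joint radial--transverse correlations for particles conditioned to land in an $O(1)$ window at depth $y\sim L$ below the tip is a substantial computation, not a routine step. The paper's mechanism is different: Gärtner's theorem (\cref{prop:gartners-result}) is not a first-moment estimate on particle counts but a statement about the solution of the F-KPP equation, i.e.\ a \emph{lower bound on the probability} that a single BBM cloud places some particle within distance $1/4$ of a prescribed target point $\mathbf z_{\tau_i}(s,\theta)$ at norm $m_{\tau_i}^G(d)-r$; this probability is at least $1/4$ per cloud, and the concentration then comes from independence \emph{across} the $\gtrsim L^{2-\iota}$ clouds born in an interval $I$ near $\sigma_k^*L^2$ (\cref{cor:gartner}), giving failure probability $e^{-cL^{2-\iota}}$. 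The transverse displacement is never analyzed directly; it is recovered from the norm by Pythagoras (\cref{claim:perc-lb}). Your proposal conflates Gärtner's theorem with a many-to-one computation and never invokes the cross-cloud independence that supplies the quantitative strength of the estimate. (Relatedly, your upper bound is attributed to Gärtner; in the paper it comes from Mallein's right-tail bound on the maximal norm, \cref{lem:total-upper-barrier}, again converted via Pythagoras.)

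A second, concrete error: you claim uniformity in $s$ "follows from monotonicity of $\h$ in $s$." The front is defined through the sliding window $p_i^{(1)}\in(-sL,-sL+1]$, so $\h(sL,\theta)$ is not monotone in $s$. The paper instead discretizes $[0,T]\times\S^{d-2}$ on a mesh of spacing $1/(2L)$, shrinks the window and cone so that mesh values lower-bound nearby values (\cref{eqn:t*-lower-bound}), and beats the union bound over $O(L)$ mesh points with the superexponential one-point estimate of \cref{claim:one-point-lowerbound} together with Hölder continuity of $L^{-1}\bA_{\cdot L^2}(\xi)$. Without the $e^{-cL^{2-\iota}}$ concentration, a chaining argument of the kind you sketch has nothing strong enough to sum.
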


\begin{proposition} \label{prop:X_L-rho-convergence}
In the space  $(C[0,T], \|\cdot\|_{\infty})$, the process $(X_L(s))_{s \in[0,T]}$ under $\Pb$ converges weakly 
to $(\rho(s))_{s\in[0,T]}$ as $L\to\infty$.
\end{proposition}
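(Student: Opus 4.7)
The approach is to realize $X_L$ and $\rho$ as continuous functionals of the spine trajectory, obtain weak convergence of the rescaled spine to a Bessel(3) process via the description in Section~\ref{subsec:A-pi}, and then apply the continuous mapping theorem after cutting off the supremum at a finite scale.

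\textbf{Step 1: Reduction to spine convergence.} Define the rescaled spine
\[
R^L_\sigma \;:=\; \frac{\hat{A}_{\sigma L^2}(\xi)}{L}, \qquad \sigma \geq 0,
\]
so that $X_L(s) = (\sup_{\sigma \geq 0}(\sigma s - \sigma R^L_\sigma))^{1/2}$. Remark~\ref{rk:spine-bessel} and the forthcoming Lemma~\ref{lem:spine-bessel-convergence} assert that under $\Pb$ the process $(\hat{A}_s(\xi))_{s \geq 0}$ is, after an initial segment whose length does not grow with $L$, indistinguishable from the negative of $\Gamma^{(b)}$, which after the hitting time $T_b^B$ is a Bessel(3) process shifted by $-b$. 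Combined with Brownian scaling of Bessel(3), $(R_{\sigma L^2}/L)_\sigma \stackrel{(d)}{=} (R_\sigma)_\sigma$, this gives weak convergence of $(R^L_\sigma)_{\sigma \in [0,M]}$ under $\Pb$ to a Bessel(3) process $(R_\sigma)_{\sigma \in [0,M]}$ in $(C([0,M]),\|\cdot\|_\infty)$ for every fixed $M > 0$. I will invoke the Skorokhod representation theorem to realize this convergence almost surely on an enlarged probability space.

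\textbf{Step 2: Truncating the supremum.} For $M > 0$, define the functional
\[
\Phi_M : C([0,M]) \longrightarrow C([0,T]), \qquad \Phi_M(f)(s) \;:=\; \Big( \sup_{\sigma \in [0,M]} (\sigma s - \sigma f(\sigma)) \Big)^{1/2},
\]
which is well-defined and continuous in the uniform topology on paths $f$ with $f(0) = 0$ (at $\sigma = 0$ the argument of the supremum is $0$, guaranteeing nonnegativity, and $\sigma \mapsto \sigma f(\sigma)$ is uniformly continuous on $[0,M]$). The key quantitative step is to show that, for each $\eta > 0$, one can pick $M = M(T,\eta)$ such that $\rho(s) = \Phi_M(R)(s)$ on $[0,T]$ and $X_L(s) = \Phi_M(R^L)(s)$ on $[0,T]$ with probability at least $1-\eta$, the latter bound being \emph{uniform in $L$}. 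For the limit process this follows from Bessel(3) moment bounds: $\mathbb{E}[R_\sigma] \asymp \sqrt{\sigma}$ with Gaussian-type concentration, so $\sigma R_\sigma \asymp \sigma^{3/2}$ swamps $\sigma T$ once $\sigma$ is large, giving $\sup_{s \in [0,T], \sigma \geq M}(\sigma s - \sigma R_\sigma) \leq 0$ with high probability. For $X_L$, the explicit form \eqref{eqn:conditioned-law-A} of $\Pb$ has Radon--Nikodym derivative bounded by $1/Z(b)$ with respect to the unconditioned Bessel-with-drift law, so the same tail bound transfers with an $L$-independent constant.

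\textbf{Step 3: Continuous mapping.} Given Steps 1 and 2, for each $\eta > 0$ and $M$ large enough depending on $\eta$, the continuous mapping theorem applied to $\Phi_M$ yields
\[
X_L \;=\; \Phi_M(R^L) \;\Longrightarrow\; \Phi_M(R) \;=\; \rho \qquad \text{in } (C([0,T]),\|\cdot\|_\infty)
\]
on the event (of probability $\geq 1 - 2\eta$ uniformly in $L$) where both supremum cutoffs are valid, and sending $\eta \to 0$ finishes the proof.

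\textbf{Expected main obstacle.} The delicate point is the uniform-in-$L$ tail bound in Step 2: the conditioning on $\{A^*(\xi) \in \mathrm{d}b\}$ in the law \eqref{eqn:law-A} inserts the nontrivial Radon--Nikodym factor $\exp(-2 \int_0^\infty G_r(\sqrt2 \Gamma_r^{(b)})\mathrm{d}r)$, and one must verify that its influence on the large-$\sigma$ behaviour of $\hat{A}_{\sigma L^2}$ is negligible so that Bessel-type growth rates survive under $\Pb$. This is precisely the content of Lemma~\ref{lem:spine-bessel-convergence} cited in Remark~\ref{rk:spine-bessel}, whose boundedness of the Radon--Nikodym derivative and exponential decay of $G_r$ in its argument will furnish the required uniform estimates.
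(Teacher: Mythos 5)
Your plan is correct and follows essentially the same route as the paper: truncate the supremum to $\sigma\in[0,N]$ with an error that is small uniformly in $L$ (controlled, as you note, by bounding the Radon--Nikodym tilt in \eqref{eqn:conditioned-law-A} by a constant and using the last-exit-time tightness of the rescaled spine, cf.\ \cref{lem:last-exit-time}), then apply \cref{lem:spine-bessel-convergence} together with continuity of the variational functional. The only cosmetic difference is that the paper works with $X_L^2$ and $\rho^2$ to sidestep the square root and phrases the limit interchange as a three-term triangle inequality rather than via Skorokhod representation on a good event.
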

\begin{proof}[Proof of \cref{thm:extremal-front-C}]
\cref{thm:extremal-front-C} is an immediate consequence of \cref{prop:sup-norm,prop:X_L-rho-convergence}.
\end{proof}

Below, we prove \cref{prop:sup-norm} as a consequence of \cref{lem:h-to-hbar,lem:hbar-convergence}.
\cref{prop:X_L-rho-convergence} is proved in \cref{sec:spine} as a consequence of the Bessel-like nature of $\hat{A}_{\sigma L^2}(\xi)/L$ (c.f.\ \cref{rk:spine-bessel}).

\subsection{Proof of \cref{prop:sup-norm}}
\label{subsec:pf-prop-sup-norm}
The first result we need, \cref{lem:h-to-hbar}, shows 
$L^{-3/2}\h(sL,\theta)$ has the same asymptotic law, as $L\to\infty$,
as a simplified object $\hs(s,\theta)$,
defined as follows.

First, observe we may write $\h$ as a ``double maximum'': a maximum over the atoms of $\pi$, indexed by $i \in \N$, of the maximum transversal displacement amongst  particles in the $i^{\mathrm{th}}$ conditioned BBM cloud $\cN_{\tau_i}^{i*}$. More precisely, for each $i \in \N$, define
\[
    Z_L^{i*}(s, \theta) := \max_{v \in \cN_{\tau_i}^{i*}} \| \Y_{\tau_i}^*(v) + \Y_{\tau_i}(\xi) \| \ind{X_{\tau_i}^*(v) + A_{\tau_i}(\xi) \in (-sL, -sL+1]} \ind{|\theta \cdot \arg (\Y_{\tau_i}^*(v)+\Y_{\tau_i}(\xi))| \geq 1- \ep} \,.
\]
Then, for each $L>0$, $s\geq 0$, and $\theta\in\S^{d-2}$, we have
\begin{align}
    \h(sL,\theta) = \max_{i \in \N}  Z_L^{i*}(s, \theta)\,. \label{eqn:extremal-front-double-maximum}
\end{align}
We sometimes refer to $Z_L^{i*}$ as the contribution of the $i^{\mathrm{th}}$ conditioned BBM cloud $\cN_{\tau_i}^{i*}$  to  $\h$.

Now, great simplifications can be made by localizing the set of $i\in \N$ for which $Z_L^{i*}$ contribute to $L^{-3/2}\h(sL,\theta)$. Moreover, it turns out that replacing the conditioned BBM clouds with independent, standard BBM does not affect the limit. Towards this end,
for each atom $\tau_i$ of $\pi$, generate an independent, standard $d$-dimensional BBM $\{\B_s(v) \in\R^d : v\in \cN_s^{i}\}_{s\geq0}$ on the same probability space, where  $\B_s(v) := (  X_s(v), \Y_s(v)) \in \R\times \R^{d-1}$. For each $i \in \N$, $s\geq 0$, and $\theta \in \S^{d-2}$, define
\begin{align}
    Z_L^i(s, \theta) := \max_{v \in \cN_{\tau_i}^{i}} \|\Y_{\tau_i}(v)\| \ind{X_{\tau_i}(v) +A_{\tau_i}(\xi)\in (-sL,-sL+1]}\ind{|\theta \cdot \arg (\Y_{\tau_i}(v)+\Y_{\tau_i}(\xi))| \geq 1-\ep}  \,,
    \label{def:ZLi}
\end{align}
Finally, recall the time horizon $T$, and define  the \emph{simplified front}
\begin{align}
    \hs(s,\theta) :=  L^{-\frac32} \max_{i\in\N} Z_{L}^{i}(s,\theta) \ind{\tau_i \in [
    L^{1.4}, \tau_{TL}(\bA(\xi))]} \,, \label{def:simplified-front}
\end{align}
where used the following notation for the last exit time of a stochastic process $\mathcal{X}_.$: for $C>0$, define
\begin{equation}\label{def:last-exit-time}
\tau_C(\mathcal{X}) := \sup_{t\geq 0} \{ \mathcal{X}_t \in [0,C] \}\,.
\end{equation}

\begin{lemma} \label{lem:h-to-hbar}
There exists a coupling of the pair $\h$ and $\hs$ in the following sense: there exist random variables $\hcirc$ and $\hsb$ defined on a probability space $(\mathsf{\Omega}, \mathsf{F}, \mathsf{P}^{(b)})$ such that
    $\h$ under $\Pb$ has the same law as $\hcirc$ under $\mathsf{P}^{(b)}$;
    $\hs$ under $\Pb$ has the same law as $\hsb$ under $\mathsf{P}^{(b)}$; and
\[
    \lim_{L\to\infty}\sup_{s\in[0,T],\theta \in \S^{d-2}}\|L^{-\frac32}\hcirc(sL,\theta) - \hsb(s,\theta)\| =0 \text{ in probability.}
\]
\end{lemma}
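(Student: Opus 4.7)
I would construct a coupling between $\h$ and $\hs$ on a common probability space by sequentially eliminating the three differences between them: the unrestricted range of branching times, the conditioning on the events $E_i$, and the spine shift $\Y_{\tau_i}(\xi)$ appearing inside the norm and the argument indicator of $Z_L^{i*}$. Each step should change the normalized front $L^{-3/2}\h(sL,\theta)$ by an error that is $o_{\P}(1)$, uniformly over $(s,\theta) \in [0,T]\times\S^{d-2}$.

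For the first step, restricting the branching times to $[L^{1.4},\tau_{TL}(\bA(\xi))]$: if $\tau_i > \tau_{TL}(\bA(\xi))$, then $\bA_{\tau_i}(\xi) > TL$; combined with the strip constraint $X^*_{\tau_i}(v) + A_{\tau_i}(\xi) \in (-sL,-sL+1]$ and the identity $A_{\tau_i}(\xi) = -\sqrt 2 \tau_i - \bA_{\tau_i}(\xi)$, this forces $X^*_{\tau_i}(v) - \sqrt 2 \tau_i > (T-s)L \geq 0$ for $s \leq T$, which by the right-tail estimate for the BBM maximum occurs with probability decaying superpolynomially in $L$. For $\tau_i < L^{1.4}$, conditionally on the genealogy the transversal components are independent Brownian motions of duration $\tau_i$, so a union bound over at most $e^{\tau_i}$ particles together with a Gaussian tail estimate gives $\max_{v \in \cN_{\tau_i}^{i*}}\|\Y^*_{\tau_i}(v)\| = O(L^{1.4}) = o(L^{3/2})$.

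For the second step, I would couple the conditioned cloud $\mathfrak B^{i*}$ with an independent unconditioned $d$-dimensional BBM $\mathfrak B^{i}$ (the one defining $Z_L^i$) via a rejection coupling, so that the two agree on $E_i$. By \cref{rk:spine-bessel} and the Bessel-like asymptotics of $\bA(\xi)$, one has $\bA_{\tau_i}(\xi) \gtrsim \tau_i^{1/2} \gg \log \tau_i$ for $\tau_i \geq L^{1.4}$, so the Bramson right-tail estimate yields $\P(E_i^c \mid \bA(\xi)) = o(1)$ uniformly; conditionally on $\bA(\xi)$ the events $E_i$ are independent, and a union bound over the $O_{\P}(L^2)$ atoms in the window gives that $\bigcap_i E_i$ holds with probability tending to one, so $Z_L^{i*} = Z_L^i$ simultaneously in the coupling. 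In the third step, removing the spine shift $\Y_{\tau_i}(\xi)$ from the norm costs at most $\|\Y_{\tau_i}(\xi)\| = O(L\sqrt{\log L}) = o(L^{3/2})$, since $\Y(\xi)$ is an independent $(d-1)$-dimensional Brownian motion and $\tau_i \leq \tau_{TL}(\bA(\xi)) = O_{\P}(L^2)$; the same bound implies that the angular perturbation is $O(L^{-1/2})$ relative to $\|\Y^*_{\tau_i}(v)\| \asymp L^{3/2}$ on the event that contributes to $Z_L^{i*}$, so the indicator $|\theta\cdot\arg(\cdot)|\leq \ep$ is preserved for any fixed $\ep \in (0,1)$.

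The main obstacle is obtaining a quantitative bound on $\P(E_i^c \mid \bA(\xi))$ that is strong enough to survive the $O_{\P}(L^2)$-term union bound in Stage 2; this requires Bramson-type estimates on the right tail of the BBM maximum that are sharp and uniform across the entire range $\tau_i \in [L^{1.4}, O_{\P}(L^2)]$, rather than the crude bounds that suffice for Stage 1. A similar uniformity concern appears in Stage 1 when quantifying the combined effect of the strip constraint and the transversal displacement, but there the estimates are routine first-moment computations.
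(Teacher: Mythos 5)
Your three-stage reduction (restrict the branching times, remove the conditioning on the $E_i$, drop the spine shift $\Y_{\tau_i}(\xi)$ from the norm) is exactly the paper's decomposition, and each stage is carried out with essentially the same estimates: the crude $e^{\tau_i}\times$Gaussian bound for $\tau_i<L^{1.4}$, the sharp tail \eqref{eqn:mallein} plus the barrier $\bA_s(\xi)\geq s^{1/2-\nu}$ (\cref{lem:barrier-spine}) for the conditioning, and $\|\Y_{\tau_i}(\xi)\|=O(\tau_i^{1/2+\nu})=o(L^{3/2})$ on $\{\tau_i\leq\tau_{TL}(\bA(\xi))\}=O_{\P}(L^2)$ for the shift. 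The one genuinely different sub-step is your treatment of $\tau_i>\tau_{TL}(\bA(\xi))$: you argue that since $\bA_{\tau_i}(\xi)>TL$ there, the strip constraint forces $\max_v X_{\tau_i}(v)>\sqrt2\tau_i$, so \emph{no} particle lands in the window. This is valid — note $\P(M_{\tau_i}\geq\sqrt2\tau_i)\asymp\tau_i^{-3/2}\log\tau_i$ by \eqref{eqn:mallein}, which is only polynomially small (your ``superpolynomially'' is an overstatement), but the Campbell integral over $\tau_i\geq\tau_{TL}\gtrsim L^2/K$ still vanishes. The paper's \cref{lem:times-larger-L2} instead uses a first-moment (many-to-one) count of particles in the strip, which is \emph{not} small ($\leq TL$) for $\tau_i$ near $\tau_{TL}$, and compensates by showing such particles have transversal displacement $\leq L^{5/4}$; your route uses the sharper max-tail and is arguably more direct, at the cost of needing the conditional-law comparison $\P(\cdot\mid E_i)\leq\P(\cdot)/\P(E_i)$ if you run this stage before de-conditioning.

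Two smaller points. First, the ``main obstacle'' you flag is not one: $\P(E_i^c\mid\bA(\xi))\leq\P(M_{\tau_i}>\sqrt2\tau_i+\tau_i^{1/2-\nu})\leq e^{-\tau_i^{1/2-\nu}}$ on $\Ebump(\nu,L)$ by \eqref{eqn:mallein}, which is stretched-exponentially small for $\tau_i\geq L^{1.4}$ and trivially survives the union bound (the paper even integrates it over all of $[L^{1.4},\infty)$ via Campbell, so no restriction to $O_\P(L^2)$ atoms is needed at that stage). Second, your stage 3 removes $\Y_{\tau_i}(\xi)$ from the angular indicator as well, but the target $Z_L^i$ in \eqref{def:ZLi} \emph{retains} $\Y_{\tau_i}(\xi)$ inside $\arg(\cdot)$; removing it is unnecessary and introduces a boundary-flipping issue for particles with $|\theta\cdot\arg(\cdot)|$ near the threshold $\ep$ (which would require an $\ep\pm o(1)$ sandwiching argument). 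Simply stop after removing the shift from the norm.
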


Observe that, compared to the definition of $Z_L^{i*}$, $Z_L^i$ does not have the $+\mathbf{Y}_{\tau_i}(\xi)$ term. This omission is possible because $L^{-3/2}\mathbf{Y}_{\tau_i}(\xi)$ is negligible for $\tau_i \leq \tau_{TL}(\hat{A}(\xi))$, as the stopping time is of order $L^2$ so $|\mathbf{Y}_{\tau_i}(\xi)|$ is of order $L$; see the first display of the proof of Lemma~\ref{lem:h-to-hbar} for details.

The next lemma states that the simplified front is close to the 
process $X_L$ from \eqref{eq-XLdef} in sup-norm.
\begin{lemma} \label{lem:hbar-convergence}
We have
    \begin{align}
        \lim_{L\to\infty} \sup_{s\in[0,T], \theta \in \S^{d-2}}| 8^{-\frac{1}4}\hs(s,\theta)- X_L(s)| = 0 \text{ in $\Pb$-probability.}
    \end{align}
\end{lemma}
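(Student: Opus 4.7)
The plan is to work conditionally on the spine $\bA(\xi)$. Reparametrizing branching times as $\sigma_i := \tau_i / L^2$ and setting $a(\sigma) := \bA_{\sigma L^2}(\xi)/L$, the $X$-window in the definition of $Z_L^i(s,\theta)$ reads
\[
X_{\tau_i}(v) \in \big(\sqrt{2}\sigma_i L^2 + L(a(\sigma_i) - s),\ \sqrt{2}\sigma_i L^2 + L(a(\sigma_i) - s) + 1\big],
\]
after using the identity $A_{\tau_i}(\xi) = -\sqrt{2}\tau_i - \bA_{\tau_i}(\xi)$. Geometrically, this is an $O(1)$ window at depth $L(s - a(\sigma_i))$ below the BBM ballistic speed $\sqrt{2}\tau_i$, so only those $\sigma_i$ with $a(\sigma_i) < s$ can contribute.

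For a single $\tau_i = \sigma L^2$, I would analyze $Z_L^i(s,\theta)$ in two stages. First, a many-to-one computation together with a G\"artner-type concentration bound gives
\[
\log N_L^i(s) = \sqrt{2}\,L(s - a(\sigma)) - \frac{(s-a(\sigma))^2}{2\sigma} + O(\log L)
\]
with high probability on $\{s > a(\sigma)\}$, where $N_L^i(s)$ is the number of particles of $\cN_{\tau_i}^{i}$ falling in the above $X$-window. Second, conditionally on the genealogy and the $X$-projections, the transversal coordinates $\{\Y_{\tau_i}(v)\}$ form an independent $(d-1)$-dimensional BBM, so over these $N_L^i(s)$ particles they are approximately i.i.d.\ isotropic Gaussians in $\R^{d-1}$ of variance $\tau_i = \sigma L^2$; the maximum of their norms, restricted to the spherical cap dictated by the $\theta$-constraint, concentrates at $\sqrt{2\tau_i \log N_L^i(s)} = L^{3/2}\cdot 8^{1/4}\sqrt{\sigma(s-a(\sigma))}(1+o(1))$. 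The cap restriction changes $N_L^i(s)$ by only an order-one factor, negligible inside the logarithm.

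Taking the max over $\tau_i \in \pi \cap [L^{1.4}, \tau_{TL}(\bA)]$: since $\pi$ has intensity bounded below by a positive constant and above by $2$ on the relevant range (see \cref{subsec:A-pi}), the rescaled atoms $\{\sigma_i\}$ have gaps $O(L^{-2})$ throughout $[L^{-0.6}, T]$. Combined with a.s.\ continuity of $a(\cdot)$ (whence of $\sigma\mapsto\sqrt{\sigma(s-a(\sigma))}$) and \cref{rk:properties-rho} (the sup is attained in the interior), a discrete-to-continuous Riemann-type passage yields
\[
L^{-\frac32}\max_{i:\tau_i\in[L^{1.4},\tau_{TL}(\bA)]} Z_L^i(s,\theta) \;\longrightarrow\; 8^{1/4}\sup_{\sigma\ge 0}\sqrt{\sigma(s-a(\sigma))} \;=\; 8^{1/4}X_L(s)
\]
in $\Pb$-probability, pointwise in $s$ and $\theta$. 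The cutoff $\tau_{TL}(\bA)$ discards only $\sigma$'s with $a(\sigma)\ge T\ge s$ (which do not contribute to the sup), and the range $\sigma < L^{-0.6}$ contributes at most $O(L^{-0.3})\to 0$ to the sup.

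Finally, to promote this pointwise convergence to sup-norm convergence on $[0,T]\times\S^{d-2}$: the map $s\mapsto X_L(s)$ is non-decreasing and H\"older-$\tfrac12$ continuous (being the square root of a sup of affine functions of $s$), while $s\mapsto L^{-3/2}\max_i Z_L^i(s,\theta)$ is sandwiched between monotone quantities obtained by slightly widening or narrowing the $X$-window; a Dini-type argument then upgrades pointwise convergence on a countable dense set of $s$ to sup-norm convergence in $s$. Uniformity in $\theta$ reduces to a finite $\epsilon'$-net on $\S^{d-2}$, with control over the net coming from the fact that the maximal-$\|\Y\|$ particle's direction is essentially uniform on the sphere, so varying $\theta$ reselects among an order-one fraction of the Gaussian vectors and changes $\log N_L^i$ by at most $O(\log L)$, negligible at the $L^{3/2}$ scale. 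The principal technical obstacle is the \emph{uniform} (over $i$ and $s$) concentration of both $N_L^i(s)$ and $\max_v \|\Y_{\tau_i}(v)\|$ across the $O(L^2)$ candidate branching times: the lower bound will require a truncated second-moment argument on BBM particles whose ancestral $X$-path stays in an entropic envelope (in the spirit of \cite{KLZ21}), while the matching upper bound follows from a Chernoff first-moment bound exploiting the conditional independence of the $X$ and $\Y$ projections.
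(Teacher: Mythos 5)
Your leading-order computation is right --- the window at depth $L(s-a(\sigma))$ below $\sqrt2\tau_i$ holds $N\approx e^{\sqrt2 L(s-a(\sigma))}$ particles of transversal variance $\sigma L^2$, and $\sqrt{2\tau_i\log N}=8^{1/4}L^{3/2}\sqrt{\sigma(s-a(\sigma))}$ recovers the constant --- and your overall architecture (upper/lower bounds, discretization in $(s,\theta)$, uniformity over the $O(L^2)$ branching times) matches the paper's. But the route through the lower bound is genuinely different from the paper's, and the step you defer is exactly the hard one. The transversal coordinates of the window particles are \emph{not} approximately i.i.d.\ Gaussians: they are a branching family whose correlation structure is induced by conditioning the $X$-projection into the window, and the concentration of $\max_v\|\Y_{\tau_i}(v)\|$ at $\sqrt{2\tau_i\log N}$ for this correlated family is precisely the content that a proof must supply. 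Your proposal names the cure (a truncated second-moment argument with an entropic envelope on the ancestral paths, \`a la \cite{KLZ21}) but does not carry it out; since one must simultaneously track the $X$-path constraint and a large $\Y$-displacement, this amounts to re-deriving a multidimensional front estimate, i.e.\ most of the technical work of that earlier paper.

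The paper deliberately avoids this. For the lower bound it invokes G\"artner's front-location theorem for the $d$-dimensional F-KPP equation (\cref{prop:gartners-result}): each BBM cloud hits a ball of radius $1/4$ around any prescribed point at norm $m_{\tau_i}^G(d)-r$ with probability at least $1/4$, and independence of the clouds over an interval of $\Theta(L^{2-\iota})$ branching times near the argmax boosts this to failure probability $e^{-cL^{2-\iota}}$ (\cref{cor:gartner}, \cref{claim:one-point-lowerbound}); the Pythagorean theorem then converts the hitting event into the desired lower bound on $\|\Y_{\tau_i}(v)\|$ with no moment computation at all. For the upper bound the paper does not estimate $N$ or the Gaussian maximum either: it uses the single global barrier event that every cloud satisfies $\max_v\|\B_{\tau_i}(v)\|\le\sqrt2\tau_i+(\log L)^2$ (\cref{lem:total-upper-barrier}, a union bound over Poisson atoms plus \eqref{eqn:mallein}), and again Pythagoras, which yields uniformity in $i$, $s$, $\theta$ for free. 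So: your approach could in principle be completed, but at the cost of a substantial second-moment analysis that is currently a gap, whereas the paper's G\"artner-plus-Pythagoras argument buys both bounds with off-the-shelf inputs; conversely, your route would give finer information (the actual number of front particles per window) that the paper's does not.
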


\begin{proof}[Proof of \cref{prop:sup-norm}]
Fix any $\delta >0$. The coupling of $\h$ and $\hs$ from \cref{lem:h-to-hbar} on $(\mathsf{\Omega}, \mathsf{F}, \mathsf{P}^{(b)})$ states that, for large enough $L$ and with probability $1-o(1)$,
\[
\sup_{s\in[0,T],\theta\in\S^{d-2}} |L^{-3/2}\hcirc(sL,\theta) - \hsb(s,\theta)| <\delta/2 
\]
\cref{lem:hbar-convergence} allows us to construct $X_L$ on $(\mathsf{\Omega}, \mathsf{F}, \mathsf{P}^{(b)})$ such that $|8^{-1/4}\hsb(s,\theta)-X_L(s)| < \delta/2$ for all $s\in [0,T]$ and $\theta \in \S^{d-2}$ and large enough $L$,  with probability $1-o(1)$. The proposition follows.
\end{proof}

\cref{lem:h-to-hbar,lem:hbar-convergence} are proved in \cref{sec:simplifications,sec:proof-hbar-convergence} respectively. 
An  outline of the proof of each result is provided at the start of each of these sections.

\section{Estimates on the trajectory of the spine and the proof of \cref{prop:X_L-rho-convergence}}
\label{sec:spine}
In this section, we provide the estimates needed to control the spinal trajectory $\cS_.(\xi)$, defined in \cref{subsec:extremallimitcluster}, throughout the rest of the article.
The section ends with a proof of \cref{prop:X_L-rho-convergence}. 
Recall the asymptotic notation from \cref{subsec:asymp-notation} as well as $\Pb$ from \cref{def:Pb}.

We begin with a preliminary input from \cite{Mallein15}, who proved a right-tail bound on the maximum displacement of BBM in any dimension $D$.
\begin{lemma}[{\cite[Eq.(1.2)]{Mallein15}}]\label{lem:mallein}
Fix $D\geq 1$, and let $R_s^*$ denote the maximum norm at time $t$ of a  standard $D$-dimensional BBM. There exists  $C>0$ such that for all $s\geq 1$ and $z\in [1,s^{1/2}]$,
\begin{align}
    C^{-1}ze^{-\sqrt2z} \leq \P\big(R_s^* \geq \sqrt2s + \tfrac{D-4}{2\sqrt2}\log s+z \big) \leq C ze^{-\sqrt2z}\,. \label{eqn:mallein}
\end{align}
\end{lemma}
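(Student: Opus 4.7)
The plan is to establish \eqref{eqn:mallein} via a two-sided moment analysis of the counting variable
\[
N(s,z) := \#\{u\in\cN_s : R_s(u)\geq m_s(D) + z\}, \qquad m_s(D) := \sqrt{2}s + \tfrac{D-4}{2\sqrt{2}}\log s,
\]
combined with a Bramson-style entropic truncation on the radial trajectories. The right-tail exponent $\sqrt{2}z$ arises from the Malthusian balance between exponential branching at rate $1$ and Gaussian decay $e^{-r^2/(2s)}$ of a single particle's radial position near $r=\sqrt{2}s$; the linear prefactor $z$ and the logarithmic shift in $m_s(D)$ both come from a ballot-type constraint in the spirit of Bramson.

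For the upper bound, I would first apply the many-to-one lemma in spherical form: since the radial coordinate of a single Brownian trajectory is a Bessel($D$) process $R^{\mathrm{Bes}}$, we have $\E[N(s,z)] = e^s\, \P(R^{\mathrm{Bes}}_s \geq m_s(D)+z)$. A direct Gaussian estimate gives a bound that overshoots by a factor of order $s/z$; to recover the correct scaling I would restrict to particles whose radial path stays below a linear barrier $r\mapsto \sqrt{2}r+(\text{small correction})$ on $[0,s]$. Applying the Girsanov tilt with weight $e^{\sqrt{2}R_r(u)-r}$ converts $R^{\mathrm{Bes}}$ into a Bessel($D$) process with drift $+\sqrt{2}$; subtracting $\sqrt{2}r$ yields a recurrent process for which a standard entropic repulsion/ballot estimate supplies the missing factor $z/s$ against the naive Gaussian density. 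Once the spherical Jacobian $r^{D-1}$ and the curvature drift $(D-1)/(2r)$ are carried through the tilt and the barrier calculation, the truncated first moment satisfies $\E[N_{\mathrm{trunc}}(s,z)] \asymp z e^{-\sqrt{2}z}$ uniformly in $z\in[1,\sqrt{s}]$. Combining with a control that untruncated trajectories contribute negligibly (a separate first-moment estimate using a barrier a bit higher than $\sqrt{2}r$) yields the upper bound of \eqref{eqn:mallein}.

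For the lower bound I would use a truncated second moment method on the same restricted event. Conditioning on the split time $\sigma := u\wedge v$ of a pair of particles and invoking the branching property, the second moment decomposes into an integral over $\sigma\in[0,s]$ of the joint contribution of two conditionally independent subtrees started from a common radial height. Using the upper-bound machinery recursively on each subtree and integrating against the ballot estimate from the first step yields $\E[N_{\mathrm{trunc}}(s,z)^2] \leq C\, \E[N_{\mathrm{trunc}}(s,z)]^2$; Paley--Zygmund then gives $\P(N(s,z)\geq 1) \geq c\, z e^{-\sqrt{2}z}$.

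The main obstacle is tracking the dimensional correction $\frac{D-4}{2\sqrt{2}}\log s = \frac{-3}{2\sqrt{2}}\log s + \frac{D-1}{2\sqrt{2}}\log s$. The first summand is the classical Bramson correction, produced by a Bessel(3)-bridge-type ballot estimate for the one-dimensional tilted walk. The second summand is contributed by the spherical Jacobian $r^{D-1}$ in the radial density and must be paired precisely with the $(D-1)/(2r)$ curvature drift in the tilted Bessel equation. Matching these two contributions against each other in the entropic-repulsion calculation is the sole place where the dimension $D$ enters nontrivially and where the constant $\frac{D-4}{2\sqrt{2}}$ is pinned down; the uniformity in $z\in[1,\sqrt{s}]$ is then a matter of checking that the ballot approximation remains valid in this regime.
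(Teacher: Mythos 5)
This lemma is not proved in the paper at all: it is quoted verbatim from Mallein \cite{Mallein15}, so there is no internal argument to compare against. Your outline is, in substance, the strategy of Mallein's original proof (and of the refinements in \cite{KLZ21}): many-to-one in radial coordinates, a Girsanov tilt turning the Bessel($D$) radial motion into a drifted process, a linear barrier whose ballot estimate produces the factor $z$, the bookkeeping of the $r^{D-1}$ Jacobian against the $(D-1)/(2r)$ Bessel drift to pin down $\frac{D-4}{2\sqrt2}\log s$, and a truncated second moment for the lower bound. As a roadmap it is the right one.

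There is, however, one concrete error in the lower-bound step. You claim $\E[N_{\mathrm{trunc}}(s,z)^2]\leq C\,\E[N_{\mathrm{trunc}}(s,z)]^2$ and then invoke Paley--Zygmund to conclude $\P(N\geq 1)\geq c\,ze^{-\sqrt2 z}$. This cannot work over the full range $z\in[1,\sqrt s]$: for large $z$ one has $\E[N_{\mathrm{trunc}}]\asymp ze^{-\sqrt2 z}\ll 1$, and since $N_{\mathrm{trunc}}$ is integer-valued, $\E[N_{\mathrm{trunc}}^2]\geq\E[N_{\mathrm{trunc}}]\gg\E[N_{\mathrm{trunc}}]^2$, so the claimed inequality is false. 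Moreover, even granting it, Paley--Zygmund would only yield $\P(N\geq1)\geq\E[N]^2/\E[N^2]\geq 1/C$, a constant, not the required bound proportional to $ze^{-\sqrt2 z}$. The correct form of the second-moment estimate is $\E[N_{\mathrm{trunc}}^2]\leq C\,\E[N_{\mathrm{trunc}}]$ (the diagonal term dominates; equivalently $\E[N^2]\leq C(\E[N]+\E[N]^2)$ with the first term dominant when $\E[N]\lesssim1$), after which Paley--Zygmund gives $\P(N\geq1)\geq\E[N]/C$, which is the desired lower bound. This is exactly the decay-of-correlations computation over the split time $\sigma$ that you describe, so the fix is a matter of stating the right target inequality, but as written the chain of implications is broken.
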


The next estimate gives upper and lower barriers on the size of the components of $\cS_s(\xi)$, for all $s$ large enough. For any $\nu >0$, define the event 
\begin{align}
    E_{\xi}^{\Bumpeq}(\nu,L) := \{\forall s \geq L :  \|\Y_s(\xi)\|/s^{\frac{1}2} \in [s^{-\nu}, s^{\nu}], ~\bA_s(\xi)/s^{\frac{1}2} \in [s^{-\nu}, s^{\nu}]\}\,. \label{def:spine-barrier-event}
\end{align}

\begin{lemma}[Barrier on the spinal trajectory] \label{lem:barrier-spine}
For any fixed $\nu >0$, we have
\begin{align*}
    \lim_{L\to \infty} \Pb\big( E_{\xi}^{\Bumpeq}(\nu,L)  \big) = 1\,.
\end{align*}
\begin{proof}
From \cref{eqn:conditioned-law-A}, the $\Pb$-law of $\hat{A}_.(\xi)$ is absolutely continuous with respect to the law of $-\Gamma_.^{(b)}$, which is given in \eqref{def:Gamma}. The law of iterated logarithm for $-\Gamma^{(b)}$ (which holds because eventually $-\Gamma^{(b)}$ evolves as a Bessel(3) process), as an almost-sure statement, therefore applies to $\hat{A}_.(\xi)$. The result for $\|\Y_\cdot(\xi)\|$ follows from the  law of iterated logarithm for Bessel processes. 
\end{proof}
\end{lemma}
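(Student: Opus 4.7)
The plan is to treat the two components of the spinal trajectory $\mathcal{S}_s(\xi)=(A_s(\xi), \Y_s(\xi))$ separately, reducing each to a Bessel-type process for which the law of the iterated logarithm supplies almost-sure two-sided bounds of the form $s^{1/2\pm\nu}$, and then to intersect the resulting a.s.\ events.

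For the transversal component, I would observe that $\Y_.(\xi)$ is a standard Brownian motion in $\R^{d-1}$, so $\|\Y_s(\xi)\|$ is a Bessel$(d-1)$ process. The upper bound $\|\Y_s(\xi)\|/\sqrt{s}\le s^{\nu}$ for all sufficiently large $s$ is the classical LIL ($\|\Y_s\|\lesssim\sqrt{s\log\log s}$), while the matching lower bound $\|\Y_s(\xi)\|/\sqrt{s}\ge s^{-\nu}$ comes from Dvoretzky--Erd\H{o}s-type lower envelope estimates for Bessel processes. Both are unconditional almost-sure statements and so hold under $\Pb$ as well.

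For the radial component $\bA_s(\xi)$, my plan is to read directly from \eqref{eqn:conditioned-law-A} that the $\Pb$-law of $\bA_.(\xi)$ is absolutely continuous with respect to the law of $-\Gamma_.^{(b)}$, with a Radon--Nikodym derivative bounded between $0$ and $Z(b)^{-1}$; hence every property that holds $\P$-a.s.\ for $-\Gamma_.^{(b)}$ transfers to $\bA_.(\xi)$ under $\Pb$. Since by the construction in \eqref{def:Gamma} we have $-\Gamma_s^{(b)} = R_{s-T_b^B}-b$ for all $s\ge T_b^B$, with $R$ a Bessel$(3)$ process and $T_b^B<\infty$ a.s., the same LIL argument applies after the a.s.-finite random time $T_b^B$ and yields $\bA_s(\xi)/\sqrt{s}\in[s^{-\nu},s^{\nu}]$ for all large $s$, $\Pb$-a.s.

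Intersecting the two almost-sure events gives a full-measure set on which the two-sided bound holds beyond some (random) time, and taking $L$ past that random cutoff gives $\Pb(E_\xi^{\Bumpeq}(\nu,L))\to 1$ as $L\to\infty$. I expect the only delicate point to be invoking the correct lower envelope for the specific Bessel dimension index in each bound and pushing the density bound from \eqref{eqn:conditioned-law-A} through a tail event on the whole path; neither is substantial.
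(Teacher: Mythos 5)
Your proposal is correct and follows essentially the same route as the paper: transfer almost-sure path properties from $-\Gamma_\cdot^{(b)}$ to $\hat A_\cdot(\xi)$ via the absolute continuity in \eqref{eqn:conditioned-law-A}, and invoke the law of the iterated logarithm together with lower-envelope bounds for the Bessel processes underlying each component. The only difference is that you make explicit the Dvoretzky--Erd\H{o}s lower-envelope input and the identification of $-\Gamma^{(b)}$ with a shifted Bessel(3) process after the a.s.-finite time $T_b^B$, both of which the paper leaves implicit.
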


Recall the last exit time $\tau_C(\mathcal{X})$ of a stochastic process $\mathcal{X}_.$ from~\eqref{def:last-exit-time}. The next lemma shows that $L^{-2}\tau_{L}(\bA(\xi))$ is tight in $L$. 

\begin{lemma}[Last exit time estimate] \label{lem:last-exit-time} 
Define the event 
\begin{align}
    E_{\xi}^{\exit}(K,L) := \{\tau_L(\bA(\xi)) \in [K^{-1} L^2, KL^2] \} \,. \label{def:E-exit}
\end{align}
Then
\[
\lim_{K \to \infty} \inf_{L\geq 1} \Pb \Big( E_{\xi}^{\exit}(K,L)\Big) =1 \,.
\]
\begin{proof}
Bounding the exponential tilt in the right-hand side of \cref{eqn:conditioned-law-A} by $1$, we  have 
\begin{align*}
    \Pb(E_{\xi}^{\exit}(K,L)^c) \leq \frac{1}{Z(b)}\P\Big(\tau_L(-\Gamma^{(b)}) \not \in [K^{-1}L^2, KL^2] \Big)\,.
\end{align*}
Recall the Brownian motion $B_.$ and the hitting time $T_b^B$ from the definition of $\Gamma^{(b)}$ \eqref{def:Gamma}. Note the decomposition $\tau_L(-\Gamma^{(b)}) = T_b^B+ \tau_{L+b}(R)$. The result then follows from Brownian scaling of the Bessel process, which  in particular implies that $\tau_{L+b}(R)/(L+b)^2$ takes the same law as $\tau_1(R)$. 
\end{proof}
\end{lemma}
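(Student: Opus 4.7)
The plan is to transfer the problem from the conditioned spine trajectory $\hat A(\xi)$ to the unconditioned process $-\Gamma^{(b)}$ by absolute continuity, and then exploit the explicit Bessel(3) description of $-\Gamma^{(b)}$ together with Brownian scaling.

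First, I would use that the exponential density appearing in~\eqref{eqn:conditioned-law-A} is bounded above by $1$, since $G_r \geq 0$. This immediately gives
\[
    \Pb\!\left(E_\xi^{\exit}(K,L)^c\right) \;\leq\; \frac{1}{Z(b)}\, \P\!\left(\tau_L(-\Gamma^{(b)}) \notin [K^{-1}L^2, KL^2]\right).
\]
Since we are conditioning on $\{A^*(\xi) \in \d b\}$ for a fixed $b$, the prefactor $Z(b)^{-1}$ is a fixed finite constant (finiteness of $c_1$ is part of~\cref{eqn:max-density}), so it suffices to control the right-hand side uniformly in $L \geq 1$.

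Next, I would expand the last exit time via the explicit description~\eqref{def:Gamma}: on $[0,T_b^B]$ the process $-\Gamma_s^{(b)} = -B_s$ ends at $-b < 0$, and on $[T_b^B,\infty)$ it equals $R_{s-T_b^B}-b$, where $R$ is a Bessel(3) process drifting to $+\infty$ almost surely. Consequently the last visit of $-\Gamma^{(b)}$ to $[0,L]$ happens during the Bessel phase, and
\[
    \tau_L(-\Gamma^{(b)}) \;=\; T_b^B + \sup\{u\geq 0 : R_u \leq L+b\} \;=\; T_b^B + \tau_{L+b}(R).
\]
Finally, I would invoke Brownian scaling: $\tau_{L+b}(R)$ has the same law as $(L+b)^2\, \tau_1(R)$, and $\tau_1(R)$ is almost surely in $(0,\infty)$ (positive because $R_0 = 0 < 1$, finite because $R_u \to \infty$). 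Hence $\tau_1(R)$ is tight away from $0$ and $\infty$: for any $\eta > 0$ there exists $K_0$ with $\P(\tau_1(R) \notin [K_0^{-1},K_0]) < \eta/2$. Combined with the almost-sure finiteness of the fixed random variable $T_b^B$ and the bound $1 \leq (L+b)^2/L^2 \leq (1+b)^2$ for $L\geq 1$, enlarging $K_0$ to some $K$ absorbs both the additive $T_b^B/L^2$ contribution (which is largest when $L$ is small) and the multiplicative $(L+b)^2/L^2$ distortion, yielding the uniform-in-$L\geq 1$ bound.

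The only subtlety is the lack of uniformity in small $L$, because for $L$ close to $1$ the term $T_b^B/L^2$ need not be small; but this is harmless since $T_b^B$ is a single a.s.\ finite random variable independent of $L$, and its tail can be pushed into the constant $K$. No new probabilistic input is needed beyond the Bessel scaling invariance and standard tightness.
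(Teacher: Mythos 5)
Your proposal is correct and follows essentially the same route as the paper's proof: bound the exponential tilt by $1$ to reduce to the law of $-\Gamma^{(b)}$, use the decomposition $\tau_L(-\Gamma^{(b)}) = T_b^B + \tau_{L+b}(R)$, and conclude by Brownian scaling of the Bessel process. You spell out the uniformity in $L\geq 1$ (absorbing $T_b^B/L^2$ and the factor $(L+b)^2/L^2$ into $K$) more explicitly than the paper does, but this is the same argument.
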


Next,  we describe the process $L^{-1}\bA_{\cdot L^2}(\xi)$ as an approximate Bessel(3) process.
\begin{lemma}[Bessel(3) convergence]\label{lem:spine-bessel-convergence}
For any fixed $N>0$, the $\Pb$-law of 
$
    (L^{-1}\bA_{\sigma L^2}(\xi))_{\sigma \in [0,N]} 
$
converges weakly in $(C[0,N],\|\cdot\|_{L^{\infty}})$ to the law of a Bessel(3) process as $L$ tends to $\infty$.
\end{lemma}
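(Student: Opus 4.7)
The plan is to use \eqref{eqn:conditioned-law-A} to relate the $\Pb$-law of $\hat A_\cdot(\xi)$ to the unconditional law of $-\Gamma^{(b)}$, and then exploit Brownian scaling of the Bessel(3) process underlying $\Gamma^{(b)}$. Setting $W := \exp\big(-2\int_0^\infty G_r(\sqrt{2}\Gamma_r^{(b)})\d r\big) \in [0,1]$, one has $\E[W] = Z(b)$ and, for any bounded continuous $F: C[0,N]\to \R$,
\[
\E^{(b)}\big[F(L^{-1}\hat A_{\sigma L^2}(\xi))\big] = \tfrac{1}{Z(b)}\,\E\big[W\cdot F(L^{-1}(-\Gamma^{(b)}_{\sigma L^2}))\big].
\]
Tightness of $(L^{-1}\hat A_{\sigma L^2}(\xi))_{\sigma \in [0,N]}$ in $C[0,N]$ is automatic from Kolmogorov's criterion applied to the underlying Bessel increments, so it suffices to show that the right-hand side converges to $\E[F(\tilde R)]$, where $\tilde R$ denotes a Bessel(3) process.

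Under the unconditional law, $-\Gamma_s^{(b)} = R_{s - T_b^B} - b$ for $s \geq T_b^B$ with $R$ a Bessel(3). By Brownian scaling, $(L^{-1}R_{\sigma L^2})_{\sigma \geq 0}$ is itself exactly Bessel(3), and since $T_b^B$ and $b$ are a.s.\ finite the shifts disappear after dividing by $L$ (using H\"older continuity of Bessel paths). Hence $L^{-1}(-\Gamma^{(b)}_{\sigma L^2}) \to \tilde R_\sigma$ almost surely in sup-norm on $[0,N]$. To handle the tilt $W$, I decouple it from the rescaled process via time-truncation. Set $W_T := \exp\big(-2\int_0^T G_r(\sqrt{2}\Gamma_r^{(b)})\d r\big)$, so that $W = W_T\cdot W_T'$ with $W_T'$ depending only on $(\Gamma_r^{(b)})_{r\geq T}$. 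The integrability $\E\big[\int_0^\infty G_r(\sqrt{2}\Gamma_r^{(b)})\d r\big] < \infty$ (equivalent to finiteness of $c_1$; cf.\ \cite[Eq.~(6.7)]{ABBS12}) yields $W_T'\to 1$ in $L^1$ as $T\to\infty$. For fixed $T$, $W_T$ is measurable with respect to $\cF_T^\Gamma := \sigma(\Gamma_r^{(b)}:\, r\leq T)$, whereas by the strong Markov property of $\Gamma^{(b)}$ at time $T$ combined with Brownian scaling, conditionally on $\cF_T^\Gamma$ the rescaled process $L^{-1}(-\Gamma^{(b)}_{\sigma L^2})$ converges in distribution to $\tilde R_\sigma$ independent of $\cF_T^\Gamma$, because the additive starting correction $L^{-1}|\Gamma_T^{(b)}|$ and the time shift $L^{-2}T$ both vanish as $L\to\infty$. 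Combining these facts,
\[
\E\big[W\,F(L^{-1}(-\Gamma^{(b)}_{\sigma L^2}))\big] \xrightarrow{L\to\infty} \E[W_T]\,\E[F(\tilde R)] \xrightarrow{T\to\infty} Z(b)\,\E[F(\tilde R)],
\]
and dividing by $Z(b)$ yields the lemma.

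The main obstacle is the $L^1$-convergence $W_T' \to 1$, equivalently the integrability of $\int_0^\infty G_r(\sqrt{2}\Gamma_r^{(b)})\d r$. While established in \cite{ABBS12}, Lemma~\ref{lem:mallein} alone does not immediately deliver it since Mallein's bound is valid only for deviations $z\in[1,r^{1/2}]$, whereas the typical value $|\Gamma^{(b)}_r|\asymp \sqrt r$ sits at the boundary of this window. The workaround is to split according to whether $|\Gamma^{(b)}_r|\leq \sqrt r$ or not: on the former event Mallein yields exponential-in-$\sqrt r$ decay of $G_r(\sqrt{2}\, \Gamma^{(b)}_r)$, while on the complementary event standard Bessel tail estimates bound the probability and the trivial bound $G_r\leq 1$ handles the contribution.
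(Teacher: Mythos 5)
Your overall architecture is the same as the paper's: rewrite the $\Pb$-expectation as a tilted expectation against $-\Gamma^{(b)}$ via \eqref{eqn:conditioned-law-A}, truncate the Radon--Nikodym weight to a finite time window, use Brownian scaling to identify $(L^{-1}R_{\sigma L^2})_\sigma$ as a Bessel(3), absorb the corrections coming from $T_b^B$ and $b$ via H\"older continuity, and finally decouple the rescaled path from the initial segment on which the truncated weight depends. (One cosmetic imprecision: $L^{-1}(-\Gamma^{(b)}_{\sigma L^2})$ does not converge a.s.\ to a fixed Bessel(3) process $\tilde R$; what is true, and what the paper proves, is that its sup-distance to $L^{-1}R_{\sigma L^2}$ --- a process that is \emph{exactly} Bessel(3) in law for each $L$ --- tends to $0$.)

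The genuine gap is in the step you yourself flag as the main obstacle, namely controlling $\int_0^\infty G_r(\sqrt2\Gamma_r^{(b)})\,\d r$. First, this quantity having finite expectation is \emph{not} equivalent to the finiteness of $c_1$: the latter only asserts $\int_0^\infty \E[e^{-2\int_0^\infty G_r\,\d r}]\,\d b<\infty$, which is automatic from the bound $e^{-2\int G_r\,\d r}\le 1$ once one knows the integrand is integrable in $b$, and gives no lower-bound information on $\int_0^\infty G_r\,\d r$ for fixed $b$. Second, your proposed dichotomy fails on both sides. On $\{|\Gamma^{(b)}_r|\le\sqrt r\}$, Mallein's bound gives decay $G_r\lesssim |\Gamma_r^{(b)}|e^{-\sqrt2|\Gamma_r^{(b)}|}$, which is exponential in $|\Gamma_r^{(b)}|$, not in $\sqrt r$; when $|\Gamma_r^{(b)}|$ is of order $1$ this is of order $1$, so no decay is obtained. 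On the complementary event $\{|\Gamma^{(b)}_r|>\sqrt r\}$, a Bessel(3) at time $r$ is typically of order $\sqrt r$, so $\P(|\Gamma^{(b)}_r|>\sqrt r)$ is bounded below by a positive constant uniformly in $r$; combined with the trivial bound $G_r\le 1$ this yields $\int_0^\infty\P(|\Gamma^{(b)}_r|>\sqrt r)\,\d r=\infty$. (In fact that regime is harmless by monotonicity of $G_r$ in its argument; the dangerous regime is $|\Gamma^{(b)}_r|$ \emph{small}.) The repair is exactly what the paper does via \cref{lem:barrier-spine}: the law of the iterated logarithm gives, with probability $1-o_K(1)$, the pathwise lower barrier $-\Gamma_r^{(b)}\ge r^{1/2-\nu}$ for all $r\ge K$, whence $G_r(\sqrt2\Gamma_r^{(b)})\le e^{-r^{1/2-\nu}}$ there and the tail $\int_K^\infty G_r\,\d r$ is $o_K(1)$ on a high-probability event. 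This yields a.s.\ finiteness of $\int_0^\infty G_r\,\d r$, which (by boundedness of $W_T'$) is all you need for your $W_T'\to 1$ step; alternatively, a correct pointwise first-moment bound must split at a threshold like $|\Gamma_r^{(b)}|\le\log^2 r$ and use the Bessel(3) small-ball estimate $\P(R_r\le\lambda\sqrt r)\lesssim\lambda^3$ on the bad event. With that step repaired, the rest of your argument goes through and is essentially the paper's proof.
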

\begin{proof}
Let $f: (C[0,N], \|\cdot\|_{\infty}) \to \R$ be a bounded, continuous function, and let $\E^{(b)}$ denote expectation under $\Pb$. From \cref{eqn:conditioned-law-A}, we have
\begin{align} \label{eqn:app-mallein}
    \E^{(b)}\bigg[f\Big(\tfrac{\hat{A}_{\sigma L^2}(\xi)}{L}\Big)_{\sigma \in [0,N]} \bigg]
    = \frac{1}{Z(b)} \E\bigg[e^{-2\int_0^{\infty} G_r(\sqrt2 \Gamma_r^{(b)})\d r} f\Big(\tfrac{-\Gamma^{(b)}_{\sigma L^2}}{L}\Big)_{\sigma \in [0,N]}\bigg] \,.
\end{align}
Now, by \cref{lem:barrier-spine}, we have $-\Gamma_r^{(b)} \geq r^{1/2 -\nu}$ for all $r \geq K$ and for any $\nu \in (0,1/2)$, with probability tending to $1$ as $K$ tends to $\infty$. An application of \cref{eqn:mallein} then yields 
\[
    G_r(\sqrt 2 \Gamma_r^{(b)}) \leq \P(M_r \geq \sqrt2 r + r^{1/2 - \nu}) \leq e^{-r^{1/2 - \nu}}
\]
for all $r \geq K$ and $K$ sufficiently large (note  we absorbed the cumbersome $\log r$ terms by reducing the exponential pre-factor).
Then, using the boundedness of $f$ and \cref{lem:barrier-spine}, we have 
\begin{align} \label{eqn:spine-bessel-convergence-1}
    \E^{(b)}\bigg[f\Big(\tfrac{\hat{A}_{\sigma L^2}(\xi)}{L}\Big)_{\sigma \in [0,N]} \bigg]
    = \frac{1+o_K(1)}{Z(b)} \E\bigg[e^{-2\int_0^{K} G_r(\sqrt2 \Gamma_r^{(b)})\d r} f\Big(\tfrac{-\Gamma^{(b)}_{\sigma L^2}}{L}\Big)_{\sigma \in [0,N]}\bigg] +o_K(1)\,.
\end{align}
Defining $M_{\Gamma}^{(b)} := \max_{\sigma \in [0, L^{-2}T_b^B]} |\Gamma_{\sigma L^2}^{(b)}|$ and $M_{R}^{(b)} := \max_{\sigma \in [0, L^{-2}T_b^B]} |R_{\sigma L^2}|$, we have
\begin{align*}
    \sup_{\sigma \in[0,T]} \bigg|\frac{-\Gamma_{\sigma L^2}^{(b)}}{L} - \frac{R_{\sigma L^2}}{L} \bigg| 
    \leq 
    \max\bigg( \frac{|M_{\Gamma}^{(b)}+M_{R}^{(b)}|}{L}  
    ~,~ \sup_{\sigma \in [L^{-2}T_b^B,T]} 
    \frac{|R_{(\sigma - L^{-2}T_b^B)L^2}- R_{\sigma L^2}|}{L} \bigg)\,.
\end{align*}
From Brownian scaling, $(L^{-1}R_{\sigma L^2})_{\sigma \geq 0}$ is equal in distribution to $(R_{\sigma})_{\sigma\geq 0}$, and thus we have
almost-sure $(1/2-\delta)$-H\"older continuity with H\"older constant whose law is independent of $L$, for any $\delta \in (0,1/2)$. It follows that for any $\delta' \in (0,1)$ and all $L$ large, the above difference is bounded by $L^{-1+\delta'}$ with probability $1-o_L(1)$. Continuity of $f$ then yields
\begin{align} \label{eqn:bessel-convergence-condexp}
    \E^{(b)}\bigg[f\Big(\tfrac{\hat{A}_{\sigma L^2}(\xi)}{L}\Big)_{\sigma \in [0,N]} \bigg]
    = \frac{1+o_{K}(1)}{Z(b)} \E\bigg[e^{-2\int_0^{K} G_r(\sqrt2 \Gamma_r^{(b)})\d r} f\Big(\tfrac{R_{\sigma L^2}}{L}\Big)_{\sigma \in [0,N]}\bigg] +o_{L,K}(1) \,,
\end{align}
where the $o_{L,K}(1)$ term denotes a function that vanishes as first $L$ tends to $\infty$, then $K$ tends to $\infty$.

Observe that
\[
    \E\bigg[ f\Big(\tfrac{R_{\sigma L^2}}{L}\Big)_{\sigma \in [0,N]}  \given \Big(\tfrac{R_{\sigma L^2}}{L}\Big)_{\sigma \in [0,KL^{-2}]}\bigg] - \E\bigg[ f(R_{\sigma})_{\sigma \in [0,N]} \bigg]
\]
tends to $0$ in probability as $L$ tends to $\infty$. 
A quick way to see this is as follows: one may couple $(L^{-1} R_{\sigma L^2})_{\sigma \geq 0}$ with a Bessel(3) process $\bar{R}_{\sigma}$  in such a way that 
$
    L^{-1} R_{\sigma L^2} \in [\hat{R}_{\sigma}, \hat{R}_{\sigma} + M]
$
for all $\sigma \in [0,N]$, where  $M:= \sup_{\sigma \in [0,KL^{-2}]} L^{-1} R_{\sigma L^2}$ and
$\hat R_{\sigma} := 0$ for $\sigma \leq KL^{-2}$ and $\hat R_{\sigma} :=\bar{R}_{\sigma}$  for $\sigma> KL^{-2}$.
Brownian scaling implies  $M$ converges to $0$ a.s., and the observation follows.

Thus, taking first $L\to\infty$ then $K\to\infty$ in \cref{eqn:bessel-convergence-condexp}, we obtain $\E[f(R_{\sigma})_{\sigma \in [0,N]}]$ in the limit. 
\end{proof}

Finally, we prove \cref{prop:X_L-rho-convergence}.
\begin{proof}[Proof of \cref{prop:X_L-rho-convergence}]
Observe that $X_L(t)$ and $\rho(t)$ are a.s.\ positive (take $\sigma = 0$ in the supremums). Thus, it will suffice to show $X_L^2$ converges weakly to $\rho^2$. For  $N \geq 0$, define the following:
\[
    X_{L,N}^2(t) := \sup_{\sigma \in [0, N]} \Big( \sigma t- \sigma \frac{\bA_{\sigma L^2}(\xi)}{L}\Big) \quad \text{ and } \quad 
    \rho_N^2(t) := \sup_{\sigma \in [0,N]} \Big(\sigma t - \sigma R_{\sigma}\Big)\,.
\]
Below, convergence of processes takes place in $(C[0,T], \|\cdot\|_{\infty})$.

\begin{claim}\label{claim:X_L-X_LN}
The process $(X_{L,N}^2(t))_{t\in [0,T]}$ converges to $(X_{L}^2(t))_{t\in [0,T]}$ in probability as $N$ tends to $\infty$, uniformly over $L\geq 1$. 
\begin{proof}[Proof of \cref{claim:X_L-X_LN}]
We seek to show 
\[
\lim_{N\to\infty} \sup_{L\geq 1} \P\big( \|X_{L,N}^2 - X_L^2\|_{L^{\infty}[0,T]} >0 \big) = 0\,.
\]
Recall the last exit time \eqref{def:last-exit-time}, and define $\mathbf{\hat{A}}^{(L)}_\sigma(\xi) := L^{-1}\bA_{\sigma L^2}(\xi)$.
Since $X_L^2 > 0$ a.s., we have 
\[
\Big\{ \|X_{L,N}^2 - X_L^2\|_{L^{\infty}[0,T]} = 0\Big\} \subset 
\Big\{ \sup_{t\in[0,T]} \sup_{\sigma > N} \sigma \big(t -  \mathbf{\hat{A}}^{(L)}_\sigma(\xi) \big)\leq0 \Big\} \subset \{ \tau_T(\mathbf{\hat{A}}^{(L)}_.(\xi)) \leq N\}\,.
\]
The last containment is a consequence of the simple fact that for all $\sigma > \tau_T(\mathbf{\hat{A}}^{(L)}_.(\xi))$ and for all $t\leq T$, we have $t- \mathbf{\hat{A}}^{(L)}_\sigma(\xi) <t -T <0$. 
The result now follows from \cref{lem:last-exit-time}.
\end{proof}
\end{claim}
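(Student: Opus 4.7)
The key observation is that $X_{L,N}^2(t)\le X_L^2(t)$ deterministically, by monotonicity of the supremum, so the $L^\infty[0,T]$-difference vanishes on the event that, for every $t\in[0,T]$, the supremum defining $X_L^2(t)$ is actually achieved on $\sigma\in[0,N]$. Since both processes are nonnegative (take $\sigma=0$), it suffices to show that with probability tending to one, uniformly in $L\ge 1$, the supremand $\sigma t-\sigma\hat A_{\sigma L^2}(\xi)/L$ is nonpositive for every $\sigma>N$ and every $t\in[0,T]$. Writing $\hat A^{(L)}_\sigma(\xi):=L^{-1}\hat A_{\sigma L^2}(\xi)$, this is equivalent to the condition $\hat A^{(L)}_\sigma(\xi)>T$ for all $\sigma>N$.

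\textbf{Reduction to a last exit time.} The condition above is exactly $\tau_T(\hat A^{(L)}(\xi))\le N$ in the notation of~\eqref{def:last-exit-time}, so the deterministic containment
\[
\bigl\{\|X_{L,N}^2-X_L^2\|_{L^\infty[0,T]}>0\bigr\}\subset\bigl\{\tau_T(\hat A^{(L)}(\xi))>N\bigr\}
\]
holds, and it suffices to control the right-hand event uniformly in $L\ge 1$. The time-change $s=\sigma L^2$ gives $\tau_T(\hat A^{(L)}(\xi))=L^{-2}\tau_{TL}(\hat A(\xi))$. Applying \cref{lem:last-exit-time} with the parameter $TL$ in place of $L$, for any $\epsilon>0$ one can choose $K=K(\epsilon)$ so that $\tau_{TL}(\hat A(\xi))\le K(TL)^2$ holds with $\Pb$-probability at least $1-\epsilon$, uniformly in $L\ge 1$. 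Consequently $\tau_T(\hat A^{(L)}(\xi))\le KT^2$ with probability at least $1-\epsilon$ uniformly in $L\ge 1$, and choosing $N\ge KT^2$ finishes the argument.

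\textbf{Main obstacle.} There is essentially no new analytic content: the hard work is already contained in \cref{lem:last-exit-time}, whose $L$-uniformity comes from Brownian self-similarity of the Bessel process together with the crude bound on the tilt in~\eqref{eqn:conditioned-law-A} used in its proof. The only subtlety worth flagging is that the positivity of $X_L^2$ (guaranteed by evaluating the supremum at $\sigma=0$) is what reduces the quantitative statement ``$\|X_{L,N}^2-X_L^2\|_\infty<\delta$'' to the cleaner qualitative event ``$\|X_{L,N}^2-X_L^2\|_\infty=0$'' used throughout the argument.
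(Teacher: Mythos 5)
Your proof is correct and follows essentially the same route as the paper's: reduce the event $\{\|X_{L,N}^2-X_L^2\|_\infty>0\}$ to $\{\tau_T(\mathbf{\hat A}^{(L)}(\xi))>N\}$ via the nonnegativity of the supremand at $\sigma=0$, then invoke \cref{lem:last-exit-time}. You additionally make explicit the scaling identity $\tau_T(\mathbf{\hat A}^{(L)}(\xi))=L^{-2}\tau_{TL}(\bA(\xi))$ and the application of the lemma at level $TL$, which the paper leaves implicit.
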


\begin{claim}\label{claim:rho-rho_N}
    The process $(\rho_{N}^2(t))_{t\in[0,T]}$ converges to $(\rho^2(t))_{t\in [0,T]}$ in probability as $N$ tends to $\infty$.
\begin{proof}[Proof of \cref{claim:rho-rho_N}]
Just as in the proof of \cref{claim:X_L-X_LN}, the result follows from the convergence of $\P(\tau_T(R) > N)$ to $0$ as $N$ tends to $\infty$. 
\end{proof}
\end{claim}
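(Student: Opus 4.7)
The plan is to mimic the proof of Claim~\ref{claim:X_L-X_LN}, with the Bessel(3) process $R$ playing the role previously played by $\mathbf{\hat{A}}^{(L)}_\sigma(\xi) = L^{-1}\hat{A}_{\sigma L^2}(\xi)$. The whole task reduces to exhibiting a high-probability event on which $\rho_N^2$ and $\rho^2$ agree identically as elements of $C[0,T]$, and that event will again be $\{\tau_T(R) \leq N\}$ for the last exit time defined in~\eqref{def:last-exit-time}.

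Concretely, the first step is the trivial monotonicity $\rho_N^2(t) \leq \rho^2(t)$ for every $t$ and $N$, so $\|\rho_N^2 - \rho^2\|_{L^\infty[0,T]} = \sup_{t\in[0,T]}(\rho^2(t)-\rho_N^2(t)) \geq 0$. For the second, more substantive step, I would check that on the event $\{\tau_T(R) \leq N\}$ one has, for every $\sigma > N$ and every $t \in [0,T]$, the strict inequality $R_\sigma > T \geq t$, and therefore $\sigma(t - R_\sigma) < 0$. Because the trivial choice $\sigma = 0$ always lies in the domain of the supremum defining $\rho_N^2(t)$, we get $\rho_N^2(t) \geq 0 > \sup_{\sigma > N}\sigma(t-R_\sigma)$, so the supremum defining $\rho^2(t)$ is attained in $[0,N]$ and $\rho_N^2(t) = \rho^2(t)$. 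Since this holds simultaneously for all $t\in[0,T]$, we conclude
\[
\{\tau_T(R) \leq N\} \subset \{\|\rho_N^2 - \rho^2\|_{L^\infty[0,T]} = 0\}.
\]

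To finish, I would invoke the almost-sure transience of the Bessel(3) process --- being distributed as the norm of a $3$-dimensional standard Brownian motion, $R_\sigma \to \infty$ a.s.\ --- to deduce $\tau_T(R) < \infty$ almost surely, and hence $\P(\tau_T(R) > N) \to 0$ as $N\to\infty$. The set-inclusion above then gives $\P(\|\rho_N^2 - \rho^2\|_{L^\infty[0,T]} > 0) \to 0$, which is in fact stronger than convergence in probability of $\rho_N^2$ to $\rho^2$ in $(C[0,T], \|\cdot\|_\infty)$. I do not anticipate any real obstacle here: once the template of Claim~\ref{claim:X_L-X_LN} has been followed, this is essentially a deterministic truncation argument, made rigorous by the a.s.\ finiteness of the Bessel(3) last exit time. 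The one point to flag is the role of $\sigma=0$ in keeping $\rho_N^2$ nonnegative, which is precisely what makes the $\sigma > N$ contributions to $\rho^2$ irrelevant once $R$ has permanently left the range $[0,T]$.
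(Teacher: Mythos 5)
Your proof is correct and is exactly the argument the paper intends: it transplants the truncation argument of Claim~\ref{claim:X_L-X_LN} to the Bessel(3) process, showing $\{\tau_T(R)\leq N\}\subset\{\|\rho_N^2-\rho^2\|_{L^\infty[0,T]}=0\}$ and concluding from the a.s.\ transience of $R$. No gaps.
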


\begin{claim}\label{claim:X_LN-rho_N}
For  $N> 0$, the process $(X_{L,N}^2(t))_{t\in [0,T]}$ converges weakly to $(\rho_{N}^2(t))_{t\in [0,T]}$ as $L\to\infty$.
\begin{proof}[Proof of \cref{claim:X_LN-rho_N}]
Let $\cA_{L,N}$ denote the process $(L^{-1} \hat{A}_{\sigma L^2}(\xi))_{\sigma \in [0,N]}$, which  is an a.s.\ element of $C[0,N]$. Similarly, let $\cR_N$ denote $(R_{\sigma})_{\sigma \in [0,N]}$.
Consider the function
\begin{align*}
    g: C[0,N] &\to C[0,T] \\
    \phi &\mapsto g(\phi)(t) := \sup_{\sigma \in [0,N]} (\sigma t - \sigma \phi(\sigma))\,.
\end{align*}
\cref{claim:X_LN-rho_N} is equivalent to the following: for any bounded continuous $f: (C[0,T],\|\cdot\|_{\infty}) \to \R$,
\[
    \E [fg(\cA_{L,N})] \xrightarrow[L\to\infty]{} \E [fg(\cR_N)] \,.
\]
In light of the weak convergence of $\cA_{L,N}$ to $\cR_N$ provided by \cref{lem:spine-bessel-convergence}, we only need to show $g$ is continuous with respect to the uniform topologies on  $C[0,N]$ and $C[0,T]$. But this is an immediate consequence of the fact that, for any $\phi$ and $\tilde{\phi}$ in $C[0,N]$, one has 
\[
     \sigma t - \sigma \phi(\sigma) -  \|\phi- \tilde{\phi}\|_{L^{\infty}[0,N]} \leq  \sigma t - \sigma \tilde{\phi}(\sigma) \leq \sigma t - \sigma \phi(\sigma)+   \|\phi- \tilde{\phi}\|_{L^{\infty}[0,N]} \,. \qedhere
\]
\end{proof}
\end{claim}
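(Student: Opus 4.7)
The plan is to recast the claim as a continuous-mapping statement. I would introduce the functional
\[
g: C[0,N] \to C[0,T], \qquad g(\phi)(t) := \sup_{\sigma \in [0,N]}\bigl(\sigma t - \sigma\phi(\sigma)\bigr),
\]
and observe that, in the notation used earlier, $X_{L,N}^2 = g(\cA_{L,N})$ and $\rho_N^2 = g(\cR_N)$, where $\cA_{L,N} := (L^{-1}\hat{A}_{\sigma L^2}(\xi))_{\sigma\in[0,N]}$ and $\cR_N := (R_\sigma)_{\sigma\in[0,N]}$. By \cref{lem:spine-bessel-convergence}, $\cA_{L,N}$ converges weakly to $\cR_N$ in $(C[0,N],\|\cdot\|_\infty)$ as $L\to\infty$. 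The claim then reduces to verifying that $g$ is continuous between the indicated uniform spaces, after which the continuous mapping theorem delivers $g(\cA_{L,N}) \Rightarrow g(\cR_N)$, i.e.\ $X_{L,N}^2 \Rightarrow \rho_N^2$.

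Two small checks are needed. First, $g$ really lands in $C[0,T]$: for each fixed $\sigma$ and $\phi \in C[0,N]$, the map $t \mapsto \sigma t - \sigma\phi(\sigma)$ is affine in $t$, so $g(\phi)$ is a supremum of a compact family of affine functions of $t$, hence finite, convex, and continuous on $[0,T]$. Second, $g$ is in fact Lipschitz: for any $\phi,\tilde\phi \in C[0,N]$ and $(t,\sigma)\in [0,T]\times [0,N]$,
\[
\bigl|\,(\sigma t - \sigma\phi(\sigma)) - (\sigma t - \sigma\tilde\phi(\sigma))\,\bigr| = \sigma\bigl|\phi(\sigma) - \tilde\phi(\sigma)\bigr| \leq N\,\|\phi-\tilde\phi\|_{L^\infty[0,N]},
\]
uniformly in $(t,\sigma)$. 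Taking the supremum over $\sigma\in[0,N]$ inside and then the supremum over $t\in[0,T]$ outside gives $\|g(\phi)-g(\tilde\phi)\|_{L^\infty[0,T]} \leq N\,\|\phi-\tilde\phi\|_{L^\infty[0,N]}$.

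There is no real obstacle to overcome: the truncation to $\sigma\in [0,N]$ keeps the Bessel-like input on a compact interval, which is precisely what makes the constant $N$ in the Lipschitz estimate finite and what isolates this intermediate claim from the genuine difficulty of the untruncated statement. The only point to keep straight is that one must use the full uniform-topology weak convergence given by \cref{lem:spine-bessel-convergence} (not merely finite-dimensional convergence), since $g$ is a functional on paths; this is exactly the conclusion already supplied there.
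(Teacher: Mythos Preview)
Your proposal is correct and follows essentially the same route as the paper: define the functional $g$, invoke \cref{lem:spine-bessel-convergence} for weak convergence of the inputs, and apply the continuous mapping theorem after checking that $g$ is continuous (Lipschitz). Your version is in fact slightly more careful, keeping the factor $N$ in the Lipschitz estimate and explicitly verifying that $g(\phi)\in C[0,T]$.
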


We now complete the proof of \cref{prop:X_L-rho-convergence}. Take  $f: (C[0,T],\|\cdot\|_{\infty}) \to \R$ bounded and continuous. We have
\begin{align*}
    | \E[f(X_L^2) - \E[f(\rho^2)] | 
    \leq
    |\E[f(X_L^2) - f(X_{L,N}^2)]| + | \E[f(X_{L,N}^2) - f(\rho_N^2)]| + |\E[f(\rho_N^2) -f(\rho^2)]|\,,
\end{align*}
where we implicitly identify $X_L^2$, $\rho^2$, $X_{L,N}^2$, and $\rho_N^2$ with their restrictions to $[0,T]$. The first term tends to $0$ as $N\to\infty$, uniformly in $L$, by \cref{claim:X_L-X_LN}. The second term tends to $0$ as $L\to\infty$ by \cref{claim:X_LN-rho_N}. The third term tends to $0$ as $N\to\infty$ by \cref{claim:rho-rho_N}. Thus, sending first $L$ to $\infty$ then $N$ to $\infty$ yields the proposition.
\end{proof}

We end with a comment on the H\"older continuity of $L^{-1}\bA_{\cdot L^2}(\xi)$. Fix  $\delta \in (0,1/2)$. Observe that $-\Gamma^{(b)}_.$ is almost-surely $(1/2-\delta)$-H\"older continuous, being the concatenation of two almost-surely $(1/2-\delta)$-H\"older continuous functions. Due to Brownian scaling, $L^{-1}\Gamma^{(b)}_{\cdot L^2}$ is also almost-surely $(1/2-\delta)$-H\"older with a H\"older constant whose law is independent of $L$. The same is true for $L^{-1}\bA_{\cdot L^2}(\xi)$ by absolute continuity.
Letting $\fC_L$ denote the $(1/2-\delta)$-H\"older constant of $L^{-1}\bA_{\cdot L^2}(\xi)$, we record
\begin{align}
    \lim_{K\to\infty} \sup_{L\geq1}\Pb( \fC_L \geq K) = 0\,. \label{eqn:bound-A-holder-constant}
\end{align}

\section{Proof of \cref{lem:h-to-hbar}}
\label{sec:simplifications}
The goal of this section is to prove \cref{lem:h-to-hbar} via a series of reductions on $\h$, outlined as follows.
Recall \eqref{eqn:extremal-front-double-maximum}, which gives $\h(sL,\theta)$ as the maximum of the $Z_L^{i*}(s,\theta)$ over $i \in \N$, where $Z_L^{i*}(s,\theta)$ denotes the contribution of the conditioned BBM point cloud born at time $\tau_i$ to $\h(sL,\theta)$.
Our first step,
\cref{lem:earlytimes}, shows via union bound that, for $i \in \N$ such that $\tau_i < L^{1.4}$, the contribution $Z_L^{i*}(s,\theta)$ is tiny compared to the $L^{-3/2}$ scaling.
The next step is \cref{lem:no-conditioned-clans}, which
states that conditioning on $E_i$ (defined in \eqref{def:Ei}), for all $i\in\N$ such that $\tau_i \geq L^{1.4}$, is asymptotically trivial. This is simply because $A_s(\xi)$ decays to $-\infty$ far faster than the maximum of a BBM grows (see \eqref{eqn:mallein} for tail decay), and so $\{\max_{v\in \cN_s} X_s(v) + A_s(\xi) <0\}$ occurs with probability exponentially close to $1$ in $s$.
In particular, we may replace the conditioned BBMs $\cN_{\tau_i}^{i*}$ with standard BBMs in the definition of $Z_L^{i*}$--- we call this quantity $\bar{Z}_L^i$, defined in \eqref{def:Z-bar}.
Working with standard BBMs will allow us to perform a first-moment upper bound to eliminate the contribution of all BBM point clouds born after the last exit time $\tau_{TL}(\bA(\xi))$ of the interval $[0,TL]$ by $\bA_.(\xi)$--- this is \cref{lem:times-larger-L2}. The proof of \cref{lem:h-to-hbar} follows, and is given at the end of the section.

\subsection{Preliminaries}
Here, we record two inputs from the theory of BBM. 
The first is the
 many-to-one lemma, 
 a standard tool in the study of spatial branching processes. 
 See \cite{harris-roberts} for a thorough discussion of many-to-few lemmas. In this simple case, the result follows from the indepedence of the branching times from the particle trajectories and the tower property of expectation.
\begin{lemma}[Many-to-one lemma]\label{lem:many-to-one}
Fix $D\geq 1$, and let $\{\B_s(v):v \in \cN_s\}_{s\geq 0}$ denote a standard $D$-dimensional BBM. Let $\B_.$ denote a standard $D$-dimensional Brownian motion. For any $S\geq 0$ and any measurable function $f: C[0,S] \to\R$, we have 
\[
    \E\Big[ \sum_{v\in \cN_S}f((\B_s(v))_{s\in[0,S]}) \Big] = e^S \E[f((\B_s)_{s\in[0,S]})]\,.
\]
\end{lemma}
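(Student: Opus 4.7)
The plan is the standard ``decouple the tree from the motion'' argument. The key structural fact about standard binary BBM is that the genealogical tree $\mathcal{T}_S$---the collection of branching times together with the combinatorial family tree up to time $S$---is independent of the spatial displacements. The latter can be constructed canonically by attaching an i.i.d.\ family of $D$-dimensional Brownian motions to each edge of $\mathcal{T}_S$; concatenating these increments along the unique root-to-leaf path of any fixed $v \in \cN_S$ then gives a continuous process whose marginal law is that of a standard $D$-dimensional Brownian motion started at $0$. This is why the name ``many-to-one'' is apt: the joint law across $v$'s is highly correlated through shared ancestry, yet each individual marginal collapses to a single Brownian motion.

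With this setup, the conclusion follows in two conditional steps. First, by linearity of expectation (crucially, no independence between different leaves is needed),
\[
    \E\Big[\sum_{v \in \cN_S} f((\B_s(v))_{s\in[0,S]}) \,\Big|\, \mathcal{T}_S \Big] = \sum_{v \in \cN_S} \E\big[ f((\B_s(v))_{s\in[0,S]}) \,\big|\, \mathcal{T}_S\big] = |\cN_S|\, \E\big[f((\B_s)_{s\in[0,S]})\big],
\]
since $|\cN_S|$ is $\mathcal{T}_S$-measurable and each trajectory is marginally a standard Brownian motion independent of $\mathcal{T}_S$. Taking outer expectations then reduces the lemma to the Yule-process identity $\E[|\cN_S|] = e^S$, which follows from the ODE $\tfrac{\d}{\d s}\E[|\cN_s|] = \E[|\cN_s|]$ obtained by conditioning on the time of the first branching event (rate-$1$ binary splitting starting from one particle).

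The only technical point requiring routine care is the interchange of the random finite sum with the expectation: one establishes the identity first for nonnegative measurable $f$ via Tonelli, and then extends to bounded or signed measurable $f$ under the mild integrability hypothesis $\E[|f((\B_s)_{s\in[0,S]})|] < \infty$ by decomposing $f = f^+ - f^-$. I do not expect any serious obstacle here: the lemma is essentially a bookkeeping identity expressing an additive particle functional as a one-particle expectation weighted by the mean population size, and alternative derivations (e.g.\ by induction on the number of branchings before $S$, or via the PDE satisfied by $u(t,x) := \E_x[\sum_{v \in \cN_t} f(\cdots)]$) are treated in the survey \cite{harris-roberts} cited in the excerpt.
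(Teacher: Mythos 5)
Your proof is correct: the conditioning on the genealogical tree, the observation that each leaf's concatenated trajectory is marginally a standard Brownian motion independent of the tree, and the Yule identity $\E[|\cN_S|]=e^S$ together give exactly the stated formula, and you rightly flag the Tonelli/integrability caveat needed to make sense of ``any measurable $f$''. The paper itself offers no proof of this lemma --- it only cites \cite{harris-roberts} --- and your argument is the standard one found there, so there is nothing to reconcile.
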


In \cref{lem:mallein}, we quoted a right-tail bound on the maximum displacement of BBM, in a certain regime of the tail. The following provides an (extremely) crude bound that holds in all regimes.
\begin{lemma}\label{lem:crude-right-tail}
Using the notation of \cref{lem:mallein}, there exists  $C>0$ such that for all $s,z>0$,
\begin{align}
    \P(R_s^* \geq z) \leq C e^{s - \frac{z^2}{3s}}\,. 
\label{eqn:crude-right-tail}
\end{align}
\begin{proof}
A union bound and the many-to-one lemma yields 
\begin{align*}
    \P(R_s^* \geq z) = \P( \cup_{v\in \cN_s} \{ \|\B_s(v)\| \geq z\}) \leq e^s \P(\cR_s \geq z)\,,
\end{align*}
where $\cR_.$ denotes a Bessel(D) process. For fixed $s>0$, $s^{-1/2}\cR_s$ is a chi random variable with $D$ degrees of freedom, so that 
\begin{align}
    \P(\cR_s \geq z) = C_0^{-1}\int_{z/\sqrt{s}}^{\infty} x^{D-1} e^{-x^2/2} \d x \leq C e^{- \frac{z^2}{3s}}\,,
    \label{eqn:bessel-onepoint-bound}
\end{align}
where $C_0$ denotes a positive constant. The result follows from the last two displays.
\end{proof}
\end{lemma}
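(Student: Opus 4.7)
The plan is to combine a union bound with the many-to-one lemma (Lemma~\ref{lem:many-to-one}) to reduce the problem to a single-particle tail estimate, and then bound the tail of the norm of a $D$-dimensional Brownian motion. Since $\{R_s^* \geq z\} = \bigcup_{v\in\cN_s} \{\|\B_s(v)\| \geq z\}$, a union bound gives
\[
  \P(R_s^* \geq z) \leq \E\Bigl[\sum_{v\in\cN_s} \ind{\|\B_s(v)\| \geq z}\Bigr] = e^s\, \P(\|\B_s\| \geq z)\,,
\]
where the equality is Lemma~\ref{lem:many-to-one} applied to $f\bigl((\omega_r)_{r\in[0,s]}\bigr) = \ind{\|\omega_s\|\geq z}$, and $\B_\cdot$ on the right denotes a standard $D$-dimensional Brownian motion.

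For the single-particle piece, by Brownian scaling $\|\B_s\|/\sqrt{s}$ is a chi random variable with $D$ degrees of freedom, whose density on $[0,\infty)$ is proportional to $x^{D-1} e^{-x^2/2}$. The plan is to absorb the polynomial prefactor into the exponential using the crude pointwise estimate $x^{D-1} \leq c_D\, e^{x^2/6}$, valid uniformly in $x\geq 0$, which yields $x^{D-1} e^{-x^2/2} \leq c_D\, e^{-x^2/3}$. Integrating gives
\[
  \P(\|\B_s\| \geq z) \leq C \int_{z/\sqrt{s}}^{\infty} e^{-x^2/3}\, dx \leq C'\, e^{-z^2/(3s)}\,.
\]
Multiplying by the $e^s$ factor from the first step yields the claim.

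There is no real obstacle here: the statement is deliberately non-sharp, and the exponent $1/3$ (rather than the sharp $1/2$ coming from the chi density) is precisely the slack introduced to absorb the polynomial factor $x^{D-1}$. Consequently all constants fall out routinely and the proof reduces to the two displays above.
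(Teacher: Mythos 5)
Your proposal is correct and follows exactly the paper's argument: a union bound plus the many-to-one lemma reduces the claim to the tail of $\|\B_s\|$, whose rescaled norm is a chi random variable, and the non-sharp exponent $1/3$ absorbs the polynomial factor $x^{D-1}$ in the chi density. Nothing further is needed.
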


\subsection{Simplification of the  front}
\label{subsec:simplifications}
Recall from the first line of \cref{subsec:proof-thm-extremalfrontC} that we have fixed $\ep \in (0,1)$ and a time-horizon $T>0$, and also that we condition on $\{A^*(\xi) \in \d b\}$. 
Recall the conditioned measure $\Pb$ from \cref{def:Pb}.

Our first estimate eliminates the contribution of BBM clouds born before time $L^{1.4}$.

\begin{lemma}\label{lem:earlytimes}
We have
\begin{align*}
    \lim_{L\to\infty} \Pb\Big(L^{-3/2}\max_{i \in \N} \max_{s \geq 0 \,,\, \theta\in \S^{d-2}} Z_L^{i*}(s,\theta) \ind{\tau_i < L^{1.4}} <  2L^{-0.1} \Big)  = 1\,.
\end{align*}

\begin{proof}
Dropping both indicator functions from the definition of $Z_L^{i*}(s,\theta)$ yields a quantity with no dependence on $s$, $\theta$, nor the $\{X_{\tau_i}^*(v): v\in \cN_{\tau_i}^{i*}\}$ for any $i \in \N$. Thus, 
it suffices to show
\begin{align*}
    \lim_{L\to\infty} \Pb\Big(\max_{i \in \N} \max_{v \in \cN_{\tau_i}^{i*}} \|\Y_{\tau_i}(v) + \Y_{\tau_i}(\xi)\| \ind{\tau_i < L^{1.4}} \geq   2L^{1.4} \Big)  = 0\,.
\end{align*}
Since $\sup_{s \in [0, L^{1.4}]} \|\Y_{s}(\xi)\|$ is bounded by $L^{1.3}$ with high probability (any exponent above $0.7$ works, by \cref{lem:barrier-spine}), we may ignore the $\Y_.(\xi)$ term in the above display. One is left with bounding the maximum norm of  $\mathbf{N}$ independent $(d-1)$-dimensional BBM clouds, where 
$\mathbf{N} = |\{i \in \N : \tau_i < L^{1.4}\}|$.
Let $R_s^*$ have the law of the maximum norm of a $(d-1)$-dimensional BBM  at time $s$. Then using the fact that $\mathbf{N} \leq L^2$ holds  with high probability, we find
\begin{multline*}
    \Pb\Big(\max_{i \in \N} \max_{v \in \cN_{\tau_i}^{i}} \|\Y_{\tau_i}(v)\| \ind{\tau_i < L^{1.4}} \geq   2L^{1.4} \Big)
    \leq
        \E\bigg[\sum_{i=1}^{\mathbf{N}} \P(R_{\tau_i}^* \geq L^{1.4}\given \tau)\ind{\mathbf{N} \leq L^2} \bigg]  +o(1) \\
    \leq L^2 \P(R_L^* \geq 2L^{1.4}) +o(1)\,.
\end{multline*} 
The right-tail bound \eqref{eqn:crude-right-tail} bounds the last line as $o(1)$, completing the proof.
\end{proof}
\end{lemma}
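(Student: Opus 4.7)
The starting observation is that both indicator functions in $Z_L^{i*}(s,\theta)$ only restrict the maximum, giving
\[
Z_L^{i*}(s,\theta) \leq \max_{v \in \cN^{i*}_{\tau_i}} \|\Y^*_{\tau_i}(v) + \Y_{\tau_i}(\xi)\|\,,
\]
a quantity independent of $s$ and $\theta$. It therefore suffices to show that, with $\Pb$-probability tending to $1$, $\max_{i:\tau_i < L^{1.4}}\max_{v} \|\Y^*_{\tau_i}(v) + \Y_{\tau_i}(\xi)\| \leq 2L^{1.4}$, since dividing by $L^{3/2}$ then yields $2L^{-0.1}$. By the triangle inequality I would split this into the spine contribution $\|\Y_{\tau_i}(\xi)\|$ and the cloud contribution $\max_v \|\Y^*_{\tau_i}(v)\|$, handling each separately.

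For the spine, $\Y_\cdot(\xi)$ is a standard $(d-1)$-dimensional Brownian motion under $\Pb$ (independent of $A_\cdot(\xi)$ by construction), so Doob's maximal inequality or \cref{lem:barrier-spine} gives $\sup_{s \leq L^{1.4}} \|\Y_s(\xi)\| \leq L^{0.8}$ with probability $1-o(1)$, which is negligible compared to $L^{1.4}$.

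For the cloud contribution, my plan is to union-bound over the atoms of $\pi$ together with the tail of each transversal BBM. Since the intensity of $\pi$ is at most $2$ (per the final remark of \cref{subsec:A-pi}), the count $\mathbf{N} := \#\{i : \tau_i \leq L^{1.4}\}$ is dominated by a Poisson variable of mean $2L^{1.4}$ and so $\mathbf{N} \leq L^{1.5}$ with probability $1-o(1)$. For each cloud, conditional on the genealogy of $\cN^{i*}_{\tau_i}$ and on $E_i$, the transversal process $\Y^*_\cdot$ is a standard $(d-1)$-dimensional BBM (since $E_i$ is measurable with respect to the first coordinate given the genealogy). Applying \cref{lem:crude-right-tail} with target $z = 2L^{1.4}$ produces the per-cloud bound
\[
\Pb\Big(\max_v \|\Y^*_{\tau_i}(v)\| \geq 2L^{1.4}\,\Big|\, \tau_i\Big) \leq \frac{C \exp\!\big(\tau_i - \tfrac{4L^{2.8}}{3\tau_i}\big)}{\Pb(E_i \mid \tau_i)} \leq \frac{C e^{-L^{1.4}/3}}{\Pb(E_i \mid \tau_i)}\,,
\]
using $\tau_i \leq L^{1.4}$ in the last step. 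This exponentially-small bound easily dominates the $L^{1.5}$ union-bound cost.

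The main subtle point, in my view, is the control of the denominator $\Pb(E_i \mid \tau_i)$. For $\tau_i$ larger than some fixed constant $C_0$, \cref{lem:barrier-spine} gives $A_{\tau_i}(\xi) \lesssim -\sqrt{2}\tau_i$ with high probability, which is much more negative than the typical maximum $\sim \sqrt{2}\tau_i$ of a BBM run for time $\tau_i$; hence $\Pb(E_i \mid \tau_i) \to 1$ and the estimate goes through. For $\tau_i \in [0, C_0]$, the cloud contains $O(1)$ particles with $O(1)$ transversal displacements, so $\max_v \|\Y^*_{\tau_i}(v)\| < 2L^{1.4}$ trivially. Combining these two regimes with the spine and Poisson counting bounds above gives the claim.
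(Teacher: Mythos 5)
Your proposal is correct and follows essentially the same route as the paper: drop both indicators, control the spine term $\|\Y_\cdot(\xi)\|$ on $[0,L^{1.4}]$ separately, bound the number of atoms of $\pi$ before $L^{1.4}$ via the intensity bound, and union-bound the transversal maxima of the clouds using the crude tail estimate of \cref{lem:crude-right-tail}. The only difference is that you explicitly account for the conditioning on $E_i$ (via $\P(\cdot\mid E_i)\le\P(\cdot)/\P(E_i)$ plus a lower bound on $\P(E_i)$ for $\tau_i\ge C_0$ and a trivial bound for $\tau_i\le C_0$), a point the paper's proof passes over by silently replacing $\cN_{\tau_i}^{i*}$ with the unconditioned $\cN_{\tau_i}^{i}$; your treatment is, if anything, the more careful one.
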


The next result states that the conditioning on the BBM clouds born after time $L^{1.4}$ is asymptotically trivial.
Recall the independent BBM $\{\B_s(v) = (X_s(v), \Y_s(v)) : v\in \cN_s^i\}_{s\geq0}$, as defined above \cref{def:ZLi}.

\begin{lemma}\label{lem:no-conditioned-clans}
For each $i\in \N$, define the event 
    $\bar{E}_i := \{\max_{v\in\cN_{\tau_i}^{i}} X_{\tau_i}(v)+A_{\tau_i}(\xi) <0\}$. Then
\begin{align*}
    \lim_{L\to\infty} \Pb\Big( \bigcap_{i \in \N} \bar{E}_i \cap \{\tau_i \geq L^{1.4}\}  \Big) = 1\,.
\end{align*}
\begin{proof}
Fix any $\nu \in (0,1/2)$. Using a union bound, as well as the high-probability lower bound $\bA_s \geq s^{1/2- \nu}$ for all $s\geq L$ provided by \cref{lem:barrier-spine}, we find
\begin{align*}
    \Pb \Big(\bigcup_{i\in \N} \bar{E}_i^c \cap \{\tau_i \geq L^{1.4}\} \Big)
    \leq \E\Big[ \sum_{i\in \N} \Pb\Big(\max_{v \in \cN_{\tau_i}^{i}} X_{\tau_i}(v) > \sqrt2 \tau_i+ \tau_i^{1/2-\nu} \given \tau_i\Big) \ind{\tau_i \geq L^{1.4}}\Big] +o(1)\,.
\end{align*}
The above $\Pb$-term may be bounded using \cref{eqn:mallein}. This yields an upper bound of
\[
    C \, \E\Big[\sum_{i \in \N} \exp(-\tau_i^{1/2-\nu}) \ind{\tau_i\geq L^{1.4}} \Big] +o(1)\,,
\]
where, as in \eqref{eqn:app-mallein}, we have absorbed the pre-factor and log terms by reducing the exponent of $\tau_i$.
The proof is finished by the Campbell formula and the upper bound on the intensity of $\pi$ by $2$.
\end{proof}
\end{lemma}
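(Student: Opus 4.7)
The plan is to rewrite the target event in complementary form as
\(\bigcup_{i\in\N}\bigl(\bar E_i^c\cap\{\tau_i\geq L^{1.4}\}\bigr)\) and show it has $\Pb$-probability vanishing as $L\to\infty$ via a first-moment bound over the atoms of $\pi$. The two ingredients that drive the proof are (i) the lower-barrier estimate on the spine from \cref{lem:barrier-spine}, which forces $-A_{\tau_i}(\xi)$ to exceed $\sqrt 2\tau_i$ by at least a stretched power of $\tau_i$ for all $\tau_i\geq L$, and (ii) Mallein's right-tail bound \cref{lem:mallein} for one-dimensional BBM, which makes such atypical excursions stretched-exponentially rare; the randomness of the conditioned cloud $\cN_{\tau_i}^{i*}$ is irrelevant here and can be handled by replacing it with an independent standard BBM, in the spirit of the discussion preceding \cref{lem:no-conditioned-clans}.

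Concretely, I would fix $\nu\in(0,1/2)$ small and work on the barrier event $E_\xi^{\Bumpeq}(\nu,L)$, which has $\Pb$-probability tending to $1$ by \cref{lem:barrier-spine}. On this event, $\bA_s(\xi)=-A_s(\xi)-\sqrt 2 s\geq s^{1/2-\nu}$ for every $s\geq L$, and therefore for every atom $\tau_i\geq L^{1.4}$ one has $-A_{\tau_i}(\xi)\geq \sqrt 2\tau_i+\tau_i^{1/2-\nu}$. Thus $\bar E_i^c$ forces
\[
\max_{v\in\cN_{\tau_i}^{i}}X_{\tau_i}(v)\geq \sqrt 2\,\tau_i+\tau_i^{1/2-\nu}.
\]
Applying \eqref{eqn:mallein} with $D=1$ to the independent one-dimensional BBM $\{X_s(v):v\in\cN_s^i\}$, and absorbing both the pre-factor $z$ and the $\tfrac{-3}{2\sqrt 2}\log\tau_i$ correction into a slight reduction of the exponent (exactly as in the passage following \eqref{eqn:app-mallein}), the conditional probability of this event given $\tau_i$ is bounded by $C\exp(-\tau_i^{1/2-\nu})$ for all $\tau_i$ sufficiently large.

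It then remains to sum over $i$ with $\tau_i\geq L^{1.4}$ and take expectation. Conditionally on $A_\cdot(\xi)$ the process $\pi$ is a PPP whose intensity, by the remark at the end of \cref{subsec:A-pi}, is bounded above by $2\,ds$; hence Campbell's formula yields
\[
\E\Big[\sum_{i\in\N} e^{-\tau_i^{1/2-\nu}}\ind{\tau_i\geq L^{1.4}}\Big]\leq 2\int_{L^{1.4}}^{\infty}e^{-t^{1/2-\nu}}\,dt,
\]
which tends to $0$ as $L\to\infty$ for any $\nu<1/2$. Combined with the $\Pb$-probability of the barrier event tending to $1$, this gives the claim. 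The only mildly delicate point is the need to control all atoms $\tau_i\geq L^{1.4}$ simultaneously, but this is precisely why the high-probability barrier event is invoked (handling the spine uniformly) and why Campbell's formula suffices on the BBM side (the union bound is performed at the level of expectations); no second-moment work is needed since we are only upper-bounding the probability that a nonnegative count is nonzero.
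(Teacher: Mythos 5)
Your proposal is correct and follows essentially the same route as the paper: a union bound reduced to a first-moment computation, the barrier event from \cref{lem:barrier-spine} to force $-A_{\tau_i}(\xi)\geq\sqrt2\tau_i+\tau_i^{1/2-\nu}$, Mallein's tail bound \eqref{eqn:mallein} with the log correction absorbed into the exponent, and Campbell's formula with the intensity of $\pi$ bounded by $2$. The only (inconsequential) slip is the remark about replacing the conditioned clouds $\cN_{\tau_i}^{i*}$ by standard BBMs — the events $\bar E_i$ in the statement are already defined in terms of the unconditioned clouds $\cN_{\tau_i}^{i}$, and your actual argument correctly works with those.
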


The next result shows that, 
for all  $i \in \N$  such that $\tau_i >\tau_{TL}(\bA(\xi))$, the quantity
\begin{equation} \label{def:Z-bar}
\bZ_L^{i}(s,\theta) := \max_{v\in \cN_{\tau_i}^{i}} \| \Y_{\tau_i}(v) + \Y_{\tau_i}(\xi)\| \ind{X_{\tau_i}(v) + A_{\tau_i}(\xi) \in (-sL, -sL+1]} \ind{|\theta \cdot \arg (\Y_{\tau_i}(v)+\Y_{\tau_i}(\xi))| \geq 1-\ep}  
\end{equation}
is negligible compared to $L^{3/2}$, where we recall the last exit time $\tau_C(\mathcal X)$ from \eqref{def:last-exit-time}.

\begin{lemma} 
\label{lem:times-larger-L2}
 For any $\nu <1/12$, 
\[
    \Pb \Big( \max_{i \in \N} \max_{s\in [0,T], \theta \in \S^{d-2}}  \bZ_L^{i}(s,\theta) \ind{\tau_i > \tau_{TL}(\bA(\xi))} \leq L^{5/4}  \Big) = 1+o(1)\,.
\]

\begin{proof}
We begin with some general calculations. 
Note that 
\[
    \max_{s \in [0,T],\theta \in \S^{d-2}} \bZ_{L}^{i}(s,\theta) =  \bZ_L^{i}(T)\,, \text{ where } \bZ_L^{i}(T):= \max_{v \in \cN_{\tau_i}^{i}} \| \Y_{\tau_i}(v) + \Y_{\tau_i}(\xi) \| \ind{X_{\tau_i}(v) + A_{\tau_i}(\xi) \in (-TL,0]}\,.
\]
Fix $z>0$ and a (possibly infinite) interval $I \subset [TL, \infty]$.  
Define an independent $d$-dimensional BBM $\{(X_s(v), \Y_s(v)) \in \R\times \R^{d-1} : v \in \cN_s\}_{s\geq 0}$, and define 
\[
    \bZ_L(s, T) := \max_{v \in \cN_s} \| \Y_s(v) + \Y_s(\xi) \| \ind{X_s(v) + A_s(\xi) \in (-TL, -0]}\,.
\]
Recall the barrier event $\Ebump(\nu,L)$ from \eqref{def:spine-barrier-event}. Using the Campbell formula and the bound on the intensity of $\pi$ by $2$, we compute
\begin{multline}
    \Pb\big( \exists i\in \N : \bZ_L^{i}(T)\ind{\tau_i \in I} > z \,,\, \Ebump (\nu, L)\big) 
    \leq  \E^{(b)} \Big[\indset{\Ebump (\nu, L)} \E^{(b)}\Big[ \sum_{i \in \N} \ind{\bZ_L^{i}(T)>z} \ind{\tau_i \in I} \given \xi \Big]\Big] \\
    \leq 2\,\E^{(b)} \Big[\indset{\Ebump (\nu, L)}  \int_I \P^{(b)}\big( \bZ_L(s, T) >z \given \xi\big) \d s  \Big]\,.    \label{eqn:general-formula}
\end{multline}
Applying a union bound and the many-to-one lemma (\cref{lem:many-to-one}), we find
\begin{multline}
    \Pb\big( \bZ_L(s,T) >z \given \xi\big) \leq 
    \E^{(b)} \Big[ \sum_{v\in \cN_s} \ind{X_s(v)+A_s(\xi) \in (-TL,0]} \ind{\|\Y_s(v)+ \Y_s(\xi)\|>z} \given \xi \Big] \\
    = e^s \, \Pb\big(X_s(v) \in [\sqrt2 s- (TL- \bA_s(\xi)), \sqrt{2}s+\bA_s(\xi)] \given \xi \big) ~\Pb\big(\|\Y_s(v) + \Y_{s}(\xi)\| >z \given \xi \big)\,,
    \label{eqn:bound-Z_L}
\end{multline}
recalling $\bA_s(\xi) := -A_s(\xi)-\sqrt2s$.
Since $X_s(v) \sim \cN(0,s)$, a Gaussian tail bound yields
\begin{multline}
    e^s \Pb(X_s(v) \in [\sqrt2 s- (TL- \bA_s(\xi)), \sqrt{2}s+\bA_s(\xi)] \given \xi ) \\
    \leq \ TL \, \exp \Big( -\frac{(TL-\bA_s(\xi))^2}{2s} + \sqrt2(TL- \bA_s(\xi)) \Big) \ \leq \ TL \, \exp \Big( \sqrt2(TL- \bA_s(\xi)) \Big)\,. \label{eqn:bound1-X}
\end{multline}

We now eliminate the contribution of those $\tau_i$ larger than $L^{2+6\nu}$, by showing that for such $\tau_i$,  no particle $v\in \cN_{\tau_i}^{i}$ satisfies $X_s(v) \in [\sqrt2 s- (TL- \bA_s(\xi)), \sqrt{2}s+\bA_s]$, with high probability. Indeed, bounding the second $\Pb$ term in~\eqref{eqn:bound-Z_L} by $1$, substituting this bound into~\eqref{eqn:general-formula} with $I := (L^{2+6\nu}, \infty)$, and then applying the bound  \eqref{eqn:bound1-X}, we find that for any $z>0$,
\begin{multline*}
    \Pb\big( \exists i\in \N : \bZ_L^{i}(T)\ind{\tau_i >L^{2+6\nu}]} > z \,,\, \Ebump (\nu, L)\big)
    \ \leq \ 2TL \, \E^{(b)} \Big[\indset{\Ebump (\nu, L)}  \int_{L^{2+6\nu}}^{\infty} e^{\sqrt2(TL- \bA_s(\xi))} \d s  \Big] \\
    \leq \ 2TL\int_{L^{2+6\nu}}^{\infty} e^{\sqrt2(TL- s^{\frac12-\nu})} \d s \ = \ o(1)\,.
\end{multline*} 
In the last step, we have used the definition of $E_{\xi}^{\Bumpeq}$ as well as $(2+6\nu)(1/2-\nu) > 1$ for $\nu <1/6$.

Next,  consider $\tau_i \in I:= [\tau_{TL}(\bA(\xi)),L^{2+6\nu}]$.
Due to \cref{lem:last-exit-time}, with probability $1-o(1)$, 
we may assume $\tau_{TL}(\bA(\xi))\in [L, L^{2+6\nu}]$ (the exact 
lower bound on the interval  is of no special importance, anything $o(L^2)$
would work).
Observe that on $\Ebump (\nu,L)$, 
\[
\max_{s \in [L, L^{2+6\nu}]} \|\Y_s(\xi)\| \leq L^{(2+6\nu)(1/2+\nu)} = o(L^{5/4})\,.
\]
In particular, on $\Ebump(\nu, L)$, we have the following bound for all $s \in I$:
\[
    \Pb\big( \|\Y_s(v) + \Y_s(\xi)\| >L^{\frac{5}4} \given \xi) \leq \Pb \big( \|\Y_s(v)\| >\tfrac{1}2L^{\frac{5}4} \given \xi) 
    \leq Ce^{-\frac{1}{12}L^{1/2-6\nu}}\,.
\]
In the second inequality, we used~\eqref{eqn:bessel-onepoint-bound} and $s\leq L^{2+6\nu}$. Observe that $1/2 -6\nu>0$ since $\nu <1/12$. 
Furthermore, for $s \geq \tau_{TL}(\bA(\xi))$, the bound in~\eqref{eqn:bound1-X} simplifies to 
\[
    e^s \Pb(X_s(v) \in [\sqrt2 s- (TL- \bA_s(\xi)), \sqrt{2}s+\bA_s] \given \xi) \leq TL \,.
\]
Substituting the last two bounds into~\eqref{eqn:bound-Z_L} then subsituting that bound into~\eqref{eqn:general-formula} yields
\begin{align*}
    \Pb\Big( \exists i\in \N : \bZ_L^{i}(T)\ind{\tau_i \in [\tau_{TL}(\bA(\xi)), L^{2+6\nu}]} > L^{5/4} \,,\, \Ebump (\nu,L)\Big) 
    \leq 2CT L^{3+6\nu}\exp\big(-\tfrac{1}{12}L^{\frac12-6\nu}\big)  = o(1)\,.
\end{align*}
This concludes the proof.
\end{proof}
\end{lemma}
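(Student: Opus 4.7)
The plan is a first-moment calculation on the high-probability barrier event $E_\xi^{\Bumpeq}(\nu,L)$ of \cref{lem:barrier-spine}, bounding the expected number of branching times $\tau_i > \tau_{TL}(\bA(\xi))$ that produce a particle contributing more than $L^{5/4}$ to some $\bZ_L^i(s,\theta)$. Since the $\theta$-indicator only shrinks $\bZ_L^i(s,\theta)$ and since the windows $(-sL,-sL+1]$ for $s\in[0,T]$ are all contained in $(-TL,0]$, it suffices to bound the $(s,\theta)$-free quantity
\[
\bZ_L^i(T):=\max_{v\in\cN_{\tau_i}^i}\|\Y_{\tau_i}(v)+\Y_{\tau_i}(\xi)\|\,\ind{X_{\tau_i}(v)+A_{\tau_i}(\xi)\in(-TL,0]}.
\]
Applying Campbell's formula to $\pi$ (whose intensity is bounded by $2$) and the many-to-one lemma (\cref{lem:many-to-one}) to each cloud $\mathfrak{B}^i$, the $\Pb$-probability that some such $\tau_i$ has $\bZ_L^i(T)>L^{5/4}$ is bounded by
\[
2\,\E^{(b)}\Big[\indset{E_\xi^{\Bumpeq}}\int_{\tau_{TL}(\bA(\xi))}^\infty e^s\,\Pb(X_s\in I_s(\xi)\mid\xi)\,\Pb(\|\Y_s+\Y_s(\xi)\|>L^{5/4}\mid\xi)\,ds\Big],
\]
where $I_s(\xi):=[\sqrt2\, s+\bA_s(\xi)-TL,\sqrt2\, s+\bA_s(\xi)]$ and $X_s,\Y_s$ are independent standard Brownian motions in $\R$ and $\R^{d-1}$.

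Next, I would split the integration at $L^{2+6\nu}$. In both regimes, a Gaussian tail bound on the radial window yields $e^s\,\Pb(X_s\in I_s(\xi)\mid\xi)\leq CL\exp(\sqrt2(TL-\bA_s(\xi)))$. For $s>L^{2+6\nu}$, I would bound the transversal factor trivially by $1$; the barrier lower bound $\bA_s(\xi)\geq s^{1/2-\nu}\geq L^{(2+6\nu)(1/2-\nu)}$ strictly dominates $TL$ for $\nu$ small, so the integrand is super-polynomially small and the integral is $o(1)$. For $s\in(\tau_{TL}(\bA(\xi)),L^{2+6\nu}]$, the window factor simplifies to $CL$ because $\bA_s(\xi)>TL$ on this range. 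For the transversal tail, on $E_\xi^{\Bumpeq}$ we have $\|\Y_s(\xi)\|\leq s^{1/2+\nu}\ll L^{5/4}$ (for $\nu$ sufficiently small), so $\{\|\Y_s+\Y_s(\xi)\|>L^{5/4}\}\subset\{\|\Y_s\|>L^{5/4}/2\}$. The chi tail bound \eqref{eqn:bessel-onepoint-bound} then gives $C\exp(-L^{5/2}/(12s))\leq C\exp(-L^{1/2-6\nu}/12)$; the condition $\nu<1/12$ is precisely what makes the exponent positive, and multiplying by the integration length $O(L^{2+6\nu})$ still yields $o(1)$.

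Combining both estimates, the expected number of bad $\tau_i$ is $o(1)$, and Markov's inequality concludes. The main obstacle is the choice of splitting threshold $L^{2+6\nu}$: it must be large enough that the Gaussian penalty on the $\e_1$-coordinate dominates $e^s$ for $s$ beyond it, yet small enough that a one-point Bessel tail on $\|\Y_s\|$ at scale $L^{5/4}$ with variance $\leq L^{2+6\nu}$ still delivers super-polynomial decay. These two requirements together pin the exponent and force the quantitative restriction $\nu<1/12$.
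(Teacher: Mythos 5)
Your proposal is correct and follows essentially the same route as the paper's proof: the reduction to the $(s,\theta)$-free quantity $\bZ_L^i(T)$, Campbell's formula with the intensity bound $2$, the many-to-one lemma, the Gaussian window bound $CL\exp(\sqrt2(TL-\bA_s(\xi)))$, the split at $L^{2+6\nu}$ using the barrier $\bA_s(\xi)\geq s^{1/2-\nu}$, and the chi-tail estimate \eqref{eqn:bessel-onepoint-bound} giving $\exp(-L^{1/2-6\nu}/12)$, with $\nu<1/12$ entering exactly as you describe. No substantive differences.
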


We are now ready to prove \cref{lem:h-to-hbar}.
\begin{proof}[Proof of \cref{lem:h-to-hbar}]
  For  $s\geq 0$ and $\theta\in\S^{d-2}$, define $\mathsf{T}\h(sL,\theta) := \max_{i\in\N} Z_L^{i*}(s,\theta) \ind{\tau_i\geq L^{1.4}}$ 
and $\tilde{h}^{\ep}(s,\theta):=  L^{-3/2}\max_{i\in\N} \bZ_L^{i}(s,\theta) \ind{\tau_i\geq 1.4}$ 
as processes on $[0,T]\times\S^{d-2}$. 
\cref{lem:earlytimes} implies 
$\max_{s\in [0,T], \theta \in \S^{d-2}} \|L^{-3/2}\h(sL,\theta)- L^{-3/2}\mathsf{T}\h(sL,\theta)\|$ converges to $0$ in probability as $L\to\infty$, while
\cref{lem:no-conditioned-clans} implies  the total variation distance between $L^{-3/2}\mathsf{T}\h$ and $\tilde{h}^{\ep}$ 
converges to $0$ as $L\to \infty$. 
Thus, there exists a coupling between $(L^{-3/2}\h(sL,\theta))_{s\in[0,T],\theta\in\S^{d-2}}$ and $(\tilde{h}^{\ep}(s,\theta))_{s\in[0,T],\theta\in\S^{d-2}}$ such that the $L^{\infty}([0,T]\times \S^{d-2})$-norm of their difference converges to $0$ in probability.

Now, the discrepancy between $\bZ_L^{i}$ and $Z_L^{i}$ (defined above \cref{lem:h-to-hbar}) comes from the presence of the $\Y_{\tau_i}(\xi)$ term in $\bZ_L^{i}$. This is dealt with as follows.
Fix any   $\nu <1/12$.
On the event $E_{\xi}^{\exit}(K,TL) \cap \Ebump(\nu, L)$ (defined in \cref{sec:spine}), we have the following for  $\tau_i \leq \tau_{TL}(\bA(\xi)) \leq KT^2L^2$:
\[
    L^{-\frac32} \Big| \norm{\Y_{\tau_i}(v)+ \Y_{\tau_i}(\xi)} - \norm{\Y_{\tau_i}(v)} \Big|\leq L^{-\frac32} \norm{\Y_{\tau_i}(\xi)} \leq L^{-\frac32} \tau_i^{\frac12+\nu} \leq (KT^2)^{\frac12+\nu} L^{-\frac12+2\nu} = o_L(1)\,.
\]
This in conjunction with Lemmas~\ref{lem:barrier-spine} and~\ref{lem:times-larger-L2} yields the following convergence:
\begin{multline*}
    \lim_{L\to\infty} \Pb\Big(  \max_{s\in[0,T], \theta \in \S^{d-2}} \Big| \tilde{h}^{\ep}(s,\theta)  - \hs(s,\theta)\Big| > 2L^{-\frac{1}4} \Big) \\
    \leq \lim_{K\to\infty} \limsup_{L\to\infty} 
    \Pb\Big( E_{\xi}^{\exit}(K,TL)\cap  \Ebump(\nu, L) \cap 
    \big\{ \max_{i \in \N,s \in [0,T], \theta \in \S^{d-2}} \bZ_L^{i}(s, \theta) \ind{\tau_i > \tau_{TL}(\bA(\xi))} > L^{\frac{5}4} \big\}\Big)
    =0\,.
\end{multline*}
This convergence in probabilty statement along with the aforementioned coupling with $L^{-3/2} \h$ finishes the proof.
\end{proof}

\section{Proof of \cref{lem:hbar-convergence}}
\label{sec:proof-hbar-convergence}
Recall $\ep \in (0,1)$, $T>0$, and the conditioned measure $\Pb$ from the start of \cref{subsec:simplifications}.
In this section, we prove \cref{lem:hbar-convergence}, which states that the sup-norm of the difference between $\hs(s,\theta)$ and $X_L(s)$ converges to $0$ in probability, as $L\to\infty$.
Before proceeding, we give a sketch of the proof.

The following simple calculation is key. 
Fix $s \in [0,T]$ and $\ep \in \S^{d-2}$, as well as a Poisson time $\tau_i$ of order $L^2$. Suppose there exists a particle $v\in \cN_{\tau_i}^{i}$ satisfying:
\begin{align} \label{eqn:good-particle-conditions}
\begin{cases}
    X_{\tau_i}(v)+ A_{\tau_i}(\xi) \in (-sL, -sL+1]  \\
    \abs{\theta \cdot \arg\big(\Y_{\tau_i}(v) + \Y_{\tau_i}(\xi) \big)} \geq 1-\ep \\
    \|\B_{\tau_i}(v)\| = \sqrt2\tau_i + c(\tau_i) \text{ for some } |c(x)| = o(\sqrt{x})\,.
\end{cases}
\end{align}
In words, the above states that
 $X_{\tau_i}(v)$ and $\Y_{\tau_i}(v)$ lie in their ``target'' intervals (recall the definition of $Z_L^i$ from \eqref{def:ZLi}) while the norm $\|\B_{\tau_i}(v)\|$ lies sufficiently close to $\sqrt2 \tau_i$. The Pythagorean theorem yields
 \begin{align*}
    \|\Y_{\tau_i}(v)\| = (\|\B_{\tau_i}(v)\|^2 - | X_{\tau_i}(v)|^2)^{1/2} = 
    \sqrt{\big(2\sqrt2\tau_i + c(\tau_i) - (sL - \bA_{\tau_i}(\xi))\big)\big(c(\tau_i)  + (sL - \bA_{\tau_i}(\xi))\big)}\,.
\end{align*}
Now, suppose $sL - \bA_{\tau_i}(\xi)$ is of order $L$. This is reasonable because in the definition of $\hs$ given in \eqref{def:simplified-front}, we consider $\tau_i \leq \tau_{TL}(\hat{A}(\xi))$, and from \cref{lem:last-exit-time}, we know $\tau_{TL}(\bA(\xi)) = O(L^2)$. Moreover, $\bA_i(\xi)$ is an approximate Bessel process by \cref{lem:spine-bessel-convergence}, so $\bA_{\tau_i}(\xi)$ is typically of order $L$. The above then simplifies to
\[
    \|\Y_{\tau_i}(v)\| = 8^{\frac{1}4}L^{\frac{3}2} \Big(\frac{\tau_i}{L^2}\Big(s - \frac{\bA_{\tau_i}(\xi)}{L}\Big)\Big)^{\frac{1}2} + o(L^{\frac{3}2})\,.
\]
Writing $\tau_i = \sigma L^2$ and optimizing over $\sigma >0$ yields $X_L(s)$. 

An upper bound on $\hs(s,\theta)$ by $X_L(s)$ follows by observing that the maximum possible norm of any particle born at any time $\tau_i = O(L^2)$ is bounded by $\sqrt2\tau_i + (\log L)^2$; this is \cref{lem:total-upper-barrier}, and comes from a  union bound and the right-tail decay of the maximum norm of a BBM (\cref{eqn:mallein}). Now take $c(x) \equiv (\log L)^2 $ above.

For the lower bound, we need to show that for each $s\in[0,T]$ and $\ep \in \S^{d-2}$, there exists some $\tau_i$ of order $L^2$ and some particle $v\in \cN_{\tau_i}^{i}$ satisfying the conditions~\eqref{eqn:good-particle-conditions}. For this, we apply a classical result of G\"artner (\cref{prop:gartners-result} and \cref{cor:gartner}), which ensures this for $c(x) = -\frac{d+2}{2\sqrt2} \log x$.

As indicated in the above sketch, we prove each direction of the bound separately.
\begin{lemma}[Upper bound] \label{lem:hbar-convergence-upperbound}
We have the following for any $\eta >0$:
\begin{align}
  \lim_{L\to\infty} \Pb\Big(\max_{s\in[0,T], \theta \in \S^{d-2}} (8^{-\frac{1}{4}} \hs(s,\theta) - X_L(s)) > \eta \Big) = 0\,.
\end{align}
\end{lemma}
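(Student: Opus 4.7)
The plan is to reduce the upper bound to the Pythagorean computation sketched at the start of \cref{sec:proof-hbar-convergence}, using a high-probability uniform upper barrier on the norms of all BBM clouds born at times $\tau_i \in [L^{1.4}, \tau_{TL}(\bA(\xi))]$. Throughout, I would work on the intersection of $E_{\xi}^{\exit}(K,TL)$ and $\Ebump(\nu,L)$ for $\nu \in (0,1/12)$ and $K$ large; by \cref{lem:last-exit-time} and \cref{lem:barrier-spine}, this event has $\Pb$-probability $1 - o_L(1) - o_K(1)$ and forces $\tau_{TL}(\bA(\xi)) \leq K T^2 L^2$.

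\textbf{Step 1: global upper barrier.} The first step is to show that, with $\Pb$-probability $1-o_L(1)$, every particle $v\in \cN_{\tau_i}^{i}$ satisfies $\|\B_{\tau_i}(v)\| \leq \sqrt{2}\tau_i + (\log L)^2$ uniformly over all $\tau_i \leq KT^2 L^2$. Since the intensity of $\pi$ is bounded by $2$, the number of relevant atoms is $O(L^2)$ with high probability, and for each such atom \cref{lem:mallein} yields (after absorbing the $\log \tau_i$ correction by a slight reduction of the exponent, as in~\eqref{eqn:app-mallein})
\[
\P\Bigl(\max_{v\in \cN_{\tau_i}^{i}} \|\B_{\tau_i}(v)\| > \sqrt{2}\tau_i + (\log L)^2 \,\Big|\, \tau_i\Bigr) \leq C(\log L)^2 e^{-\sqrt{2}(\log L)^2},
\]
so that the union bound gives total exceptional probability $o_L(1)$.

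\textbf{Step 2: Pythagorean bound.} Fix $i\in \N$, $s\in[0,T]$, $\theta\in \S^{d-2}$, and $v \in \cN_{\tau_i}^{i}$ such that both indicators in the definition~\eqref{def:ZLi} of $Z_L^{i}(s,\theta)$ are nonzero. Since $A_{\tau_i}(\xi) = -\bA_{\tau_i}(\xi) - \sqrt{2}\tau_i$, the first indicator gives $X_{\tau_i}(v) = \sqrt{2}\tau_i - u$ with $u := sL - \bA_{\tau_i}(\xi) - r$ for some $r\in[0,1]$. Writing $\tau_i = \sigma L^2$ (so $\sigma\leq KT^2$), the Pythagorean identity combined with the Step 1 barrier yields
\begin{align*}
\|\Y_{\tau_i}(v)\|^2
&= \|\B_{\tau_i}(v)\|^2 - X_{\tau_i}(v)^2 \\
&\leq \bigl(\sqrt{2}\tau_i + (\log L)^2\bigr)^2 - \bigl(\sqrt{2}\tau_i - u\bigr)^2 \\
&\leq 2\sqrt{2}\,\tau_i \bigl(u + (\log L)^2\bigr) + (\log L)^4 \\
&\leq 2\sqrt{2}\,\sigma L^3 \Bigl( s - \tfrac{\bA_{\sigma L^2}(\xi)}{L} \Bigr) + O\bigl(L^2 (\log L)^2\bigr),
\end{align*}
the $O(\cdot)$ being uniform in $\sigma\in[0,KT^2]$, $s\in[0,T]$, and $\theta\in\S^{d-2}$. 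Dividing by $L^3$ and taking square roots (of the positive part, since if the right-hand side is negative, no particle can meet the constraints), one obtains
\[
L^{-3/2}\|\Y_{\tau_i}(v)\| \leq 8^{1/4}\Bigl(\sigma\bigl(s - \tfrac{\bA_{\sigma L^2}(\xi)}{L}\bigr)\Bigr)_+^{1/2} + o_L(1),
\]
with an $o_L(1)$ uniform in the same parameters.

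\textbf{Step 3: taking suprema.} By the very definition~\eqref{eq-XLdef}, $(\sigma(s-\bA_{\sigma L^2}(\xi)/L))_+^{1/2} \leq X_L(s)$ for every $\sigma\geq 0$. Taking the max over $v\in \cN_{\tau_i}^{i}$ and then over $i\in\N$ with $\tau_i \in [L^{1.4}, \tau_{TL}(\bA(\xi))]$ yields the pointwise estimate $L^{-3/2}\hs(s,\theta) \leq 8^{1/4} X_L(s) + o_L(1)$, uniformly in $s\in[0,T]$ and $\theta\in\S^{d-2}$ on the high-probability event. Dividing by $8^{1/4}$ and passing to the supremum in $s$ and $\theta$ gives the claim.

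\textbf{Main obstacle.} The key subtlety is the calibration of the $(\log L)^2$ barrier in Step 1: it must grow fast enough that a union bound over the $O(L^2)$ Poisson atoms closes via the exponential decay in \cref{lem:mallein}, yet slowly enough that the resulting error $O(L^2 (\log L)^2)$ in the Pythagorean bound becomes negligible after dividing by $L^3$. A pleasant feature of the argument is that no cancellation-based tool (e.g.\ second moment method) is required: a first-moment barrier combined with the elementary Pythagorean identity already delivers a uniform, sharp-constant upper bound.
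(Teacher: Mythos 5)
Your proposal is correct and follows essentially the same route as the paper: a high-probability barrier $\|\B_{\tau_i}(v)\|\leq\sqrt2\tau_i+(\log L)^2$ over all relevant Poisson atoms (the paper's \cref{lem:total-upper-barrier}), followed by the Pythagorean identity and the observation that $\sigma\big(s-\bA_{\sigma L^2}(\xi)/L\big)\leq X_L(s)^2$ for every $\sigma$. The only blemish is notational: since $\hs$ as defined in \eqref{def:simplified-front} already carries the $L^{-3/2}$ prefactor, your Step 3 conclusion should read $\hs(s,\theta)\leq 8^{1/4}X_L(s)+o_L(1)$ rather than $L^{-3/2}\hs(s,\theta)\leq\cdots$.
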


\begin{lemma}[Lower bound]\label{lem:hbar-convergence-lowerbound}
We have the following for any $\eta >0$,
\begin{align}
  \lim_{L\to\infty} \Pb\Big(\max_{s\in[0,T], \theta \in \S^{d-2}} (X_L(s) - 8^{-\frac{1}{4}} \hs(s,\theta)) > \eta \Big)  =0\,.
\end{align}
\end{lemma}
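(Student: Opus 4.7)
The plan is to produce, for each $s\in[0,T]$ and each $\theta\in\S^{d-2}$, a single Poisson atom $\tau_i$ and a particle $v\in\cN^{i}_{\tau_i}$ of the associated independent BBM cloud that realizes the value $8^{1/4}L^{3/2}X_L(s)$ in the definition of $\hs$, via a direct application of G\"artner's filling theorem. I work throughout on the event $E_{\xi}^{\exit}(K,TL)\cap\Ebump(\nu,L)$, where all relevant $\tau_i$ lie in $[L^{1.4},KL^2]$, the spine profile $\sigma\mapsto\bA_{\sigma L^2}(\xi)/L$ is $(1/2-\delta')$-H\"older continuous with constant bounded independently of $L$ by~\eqref{eqn:bound-A-holder-constant}, and $\|\Y_s(\xi)\|=o(L^{3/2})$ for $s\leq KL^2$. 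On this event the supremum defining $X_L(s)^2$ is attained at some $\sigma^*(s)\in[0,K]$, uniformly in $s\in[0,T]$, since $\bA_{\sigma L^2}(\xi)/L>s$ whenever $\sigma>\tau_{TL}(\bA(\xi))/L^2\leq K$.

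For each $s$ in a $\delta$-net of $[0,T]$, I would find a Poisson atom $\tau_i\in[\sigma^*(s)L^2,(\sigma^*(s)+\delta)L^2]$; such an atom exists with $\Pb$-probability at least $1-e^{-c\delta L^2}$ because the conditional intensity of $\pi$ is uniformly bounded below by a positive constant on this range (Section~\ref{subsec:A-pi}). For each $\theta$ in a $\delta$-net of $\S^{d-2}$, I would then invoke a quantitative version of G\"artner's filling theorem for the independent BBM $\cN^{i}$ at time $\tau_i$: writing $f(\tau):=\sqrt{2}\tau-\tfrac{d+2}{2\sqrt{2}}\log\tau$ for the filling radius and choosing the target
\[
p^* := \Big(x^*,\,\sqrt{f(\tau_i)^2-(x^*)^2}\,\theta\Big),\qquad x^* := \sqrt{2}\tau_i+\bA_{\tau_i}(\xi)-sL-\tfrac12,
\]
which lies inside the filled ball since $|x^*|=\sqrt{2}\tau_i-\Omega(L)\leq f(\tau_i)$ for $L$ large, with probability $1-o_L(1)$ I obtain $v\in\cN^{i}_{\tau_i}$ with $\|\B_{\tau_i}(v)-p^*\|\leq\tfrac12$. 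Such a $v$ automatically satisfies the window condition $X_{\tau_i}(v)+A_{\tau_i}(\xi)\in(-sL,-sL+1]$, and its angular part $\arg(\Y_{\tau_i}(v)+\Y_{\tau_i}(\xi))$ lies within $o_L(1)$ of $\theta$ because $\|y^*\|\asymp L^{3/2}$ dwarfs $\|\Y_{\tau_i}(\xi)\|=o(L^{3/2})$.

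A Pythagorean expansion, combined with $f(\tau_i)=\sqrt{2}\tau_i+O(\log L)$ and $|sL-\bA_{\tau_i}(\xi)|=O(L)$, then yields
\[
\|\Y_{\tau_i}(v)\|^2=\|\B_{\tau_i}(v)\|^2-X_{\tau_i}(v)^2=2\sqrt{2}\,\tau_i\big(sL-\bA_{\tau_i}(\xi)\big)+O(L^2\log L).
\]
Dividing by $8^{1/2}L^3$, writing $\tau_i=\sigma^*(s)L^2+O(\delta L^2)$, and using~\eqref{eqn:bound-A-holder-constant} to replace $\bA_{\tau_i}(\xi)/L$ by $\bA_{\sigma^*(s)L^2}(\xi)/L+O(\delta^{1/2-\delta'})$, gives $8^{-1/2}L^{-3}\|\Y_{\tau_i}(v)\|^2\geq X_L(s)^2-o_{\delta,L}(1)$, proving the pointwise lower bound. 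Uniformity in $(s,\theta)$ then follows from the $1/2$-H\"older continuity of $X_L$ in $s$ (inherited from the spine via a Legendre-transform estimate) together with the fact that shifting $s$ by $\delta$ translates the target $x^*$ by $O(\delta L)$, an effect absorbed in the $O(\delta)$ error above.

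The main obstacle will be the quantitative rate in G\"artner's filling theorem: to survive a union bound over the $O(\delta^{-d})$ points in the product net, the failure probability for each single filling target must be summable, which is precisely the role of the quantitative G\"artner statement to be established earlier in Section~\ref{sec:proof-hbar-convergence}. A secondary technical point is the degenerate regime $s\downarrow 0$, where both $X_L(s)$ and $\hs(s,\theta)$ vanish and the estimate holds trivially by nonnegativity; and the fact that $p^*$ lies strictly inside the filled ball must be verified using the optimality of $\sigma^*(s)$, which ensures $sL-\bA_{\sigma^*L^2}(\xi)\geq 0$ so that the square root in $y^*$ is well-defined.
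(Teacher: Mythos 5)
Your overall strategy---produce, near the optimizing time $\sigma^*(s)L^2$, a particle whose position is pinned by G\"artner's theorem and read off $\|\Y_{\tau_i}(v)\|$ by Pythagoras---is exactly the heuristic the paper follows, and your Pythagorean expansion matches \cref{claim:perc-lb}. But there are two genuine gaps in the execution. First, the probability that a \emph{single} BBM cloud places a particle within distance $1/4$ of a target at radial distance $m^G_{\tau_i}(d)$ is only bounded below by a constant $\fc\in(0,1/2)$; that is all \cref{prop:gartners-result} gives, and it cannot be upgraded to $1-o_L(1)$ for a target sitting at the front, where the hitting probability is genuinely bounded away from both $0$ and $1$. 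So your step ``with probability $1-o_L(1)$ I obtain $v$'' does not follow for one Poisson atom, and the ``quantitative G\"artner statement'' you defer to does not exist in the form you need. The paper instead amplifies the constant success probability over the $\Theta(L^{2-\iota})$ \emph{independent} clouds born in an interval $I$ of length $L^{2-\iota}$ adjacent to $\sigma_k^*L^2$ (\cref{cor:gartner}), obtaining failure probability $e^{-cL^{2-\iota}}$, and then union-bounds over the $O(KL^{\iota})$ deterministic positions of $I$ to handle the randomness of $\sigma_k^*$; the H\"older continuity of the rescaled spine absorbs the resulting $O(L^{2-\iota})$ displacement of $\tau_i$ from the optimizer.

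Second, a net in $s$ of constant spacing $\delta$ is incompatible with the definition of $\hs$: the indicator $X_{\tau_i}(v)+A_{\tau_i}(\xi)\in(-sL,-sL+1]$ involves a window of width $1$, so shifting $s$ by $\delta$ moves the window by $\delta L\gg 1$ and the particle produced at a net point contributes nothing to $\hs(s,\theta)$ for nearby $s$. This is an all-or-nothing indicator, not an ``$O(\delta)$ error absorbed above.'' The paper takes a mesh of spacing $1/(2L)$ and works with a half-width window and half-angle cone ($\hat Z^i$), so that success at a mesh point implies the bound at every nearby $(s,\theta)$; the resulting $O(L)$ mesh points are affordable only because of the $e^{-cL^{2-\iota}}$ per-point failure probability coming from the amplification just described, and the fluctuation $|X_L(s)-X_L(s_*)|$ between mesh points is controlled separately via \cref{prop:X_L-rho-convergence}. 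Two smaller points: your $x^*$ should be $\sqrt2\tau_i+\bA_{\tau_i}(\xi)-sL+\tfrac12$ rather than $-\tfrac12$, otherwise the ball around $p^*$ lands outside the window; and the degenerate case $X_L(k)<\eta$ must be split off explicitly, as in \eqref{eqn:bound-A-from-k}, before assuming $kL-\bA_{\sigma_k^*L^2}(\xi)$ is of order $L$.
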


We will often work on the following high probability event for the spinal trajectory. 
For any $\nu \in (0,1/12)$, define
\begin{align}
    E^{*}_{\xi}(K,L) := E^{*}_{\xi}(K,L, \nu) =E_{\xi}^{\exit}(K,TL) \cap \Ebump(\nu, L) 
    \cap \{A^*(\xi) \in \d b\} \,,
    \label{def:E*}
\end{align}
where the events in the right-hand side were defined in 
\cref{lem:barrier-spine,lem:last-exit-time}. These lemmas show
\begin{align}
    \lim_{K\to\infty} \liminf_{L\to\infty} \Pb(E^*_{\xi}(K,L)) = 1 \,.
    \label{eqn:E*-probability-one}
\end{align}
The choice of $\nu$ here plays no role, and so we fix it arbitrarily for the remainder of the section.

\subsection{Proof of \cref{lem:hbar-convergence-upperbound}}
\label{subsec:pf-upper-bound}
The following lemma provides an upper-bound on the distance travelled by any particle born at time $\tau_i \leq L^3$ ($L^3$ is excessive, anything dominating $L^2$ would suffice).
\begin{lemma}\label{lem:total-upper-barrier}
Define the event 
\[
    E^{\max} := \{\forall i \in \N \,:\, \max_{v \in \cN_{\tau_i}^{i}} \|\B_{\tau}(v)\| \ind{\tau_i \in [0,L^3]} \leq \sqrt{2} \tau_i + (\log L)^2 \}\,.
\]
The probability of $E^{\max}$ tends to $1$ in the limit  $L \to \infty$.
\begin{proof}
This follows from the exponential right-tail  bound in \eqref{eqn:mallein} and 
a union bound over all Poisson points in $[0,L^3]$ (there are not more than, say, $L^4$   points in this interval, with high probability).
\end{proof}
\end{lemma}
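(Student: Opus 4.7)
The plan is to bound $\Pb((E^{\max})^c)$ by a union bound over the Poisson atoms $\tau_i \leq L^3$, using Mallein's right-tail estimate \eqref{eqn:mallein} when $\tau_i$ is moderately large and a direct first-moment bound when $\tau_i$ is small. Since the intensity of $\pi$ is pointwise bounded by $2$, the number of atoms of $\pi$ in $[0,L^3]$ is stochastically dominated by a Poisson random variable of mean $2L^3$; in particular, conditioning on $\pi$, one may assume $|\{i: \tau_i \leq L^3\}| \leq L^4$ (say) with probability tending to $1$. It therefore suffices, after conditioning on $\pi$, to show that
\[
\sum_{i : \tau_i \leq L^3} \P\!\left(\max_{v\in \cN_{\tau_i}^{i}} \|\B_{\tau_i}(v)\| \geq \sqrt{2}\tau_i + (\log L)^2 \,\Big|\, \pi\right) = o(1)\,.
\]
By the independence of the BBM clouds $\{\B_s(v): v \in \cN_s^i\}$ from the spine and from $\pi$, each summand equals $\P(R_{\tau_i}^* \geq \sqrt{2}\tau_i + (\log L)^2)$, where $R_s^*$ denotes the maximum norm of a standard $d$-dimensional BBM at time $s$.

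For $\tau_i \in [(\log L)^4, L^3]$, set $z = (\log L)^2 - \frac{d-4}{2\sqrt2}\log \tau_i$, so that $\sqrt2\tau_i + (\log L)^2 = \sqrt2\tau_i + \frac{d-4}{2\sqrt2}\log\tau_i + z$. One checks $z \in [1, \tau_i^{1/2}]$ for $L$ large, since $(\log L)^2 \leq \tau_i^{1/2}$ in this range. Mallein's bound \eqref{eqn:mallein} then gives
\[
\P\!\left(R_{\tau_i}^* \geq \sqrt{2}\tau_i + (\log L)^2\right) \leq C z e^{-\sqrt{2}z} \leq C (\log L)^2 \exp\!\left(-\sqrt{2}(\log L)^2 + O(\log\log L)\right)\,,
\]
which is super-polynomially small in $L$. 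Summing over at most $L^4$ such $\tau_i$ contributes $o(1)$.

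For $\tau_i \leq (\log L)^4$, Mallein's estimate does not directly apply, so we fall back on the many-to-one lemma (\cref{lem:many-to-one}) and the chi-distribution tail: letting $B_.$ be a $d$-dimensional Brownian motion,
\[
\P\!\left(R_{\tau_i}^* \geq \sqrt{2}\tau_i + (\log L)^2\right) \leq e^{\tau_i}\,\P\!\left(\|B_{\tau_i}\| \geq \sqrt{2}\tau_i + (\log L)^2\right) \leq C\left(\tfrac{z}{\sqrt{\tau_i}}\right)^{d-1} e^{\tau_i - z^2/(2\tau_i)}\,,
\]
where $z := \sqrt{2}\tau_i + (\log L)^2$. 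Expanding $-z^2/(2\tau_i)$ yields $\tau_i - z^2/(2\tau_i) = -\sqrt{2}(\log L)^2 - (\log L)^4/(2\tau_i)$, which for $\tau_i \leq (\log L)^4$ is bounded above by $-\sqrt{2}(\log L)^2 - \tfrac{1}{2}$. The polynomial prefactor $(z/\sqrt{\tau_i})^{d-1}$ is at most a polynomial in $\log L$, so again each summand is super-polynomially small, and the contribution from $\tau_i \leq (\log L)^4$ is $o(1)$.

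The main (very mild) obstacle is ensuring the right-tail bounds apply uniformly over the whole range $\tau_i \in (0, L^3]$; this is why we split into the two regimes above, covering the ``Brownian'' regime (small $\tau_i$) where Mallein is not available but Gaussian concentration is very strong, and the ``BBM'' regime (moderate to large $\tau_i$) where Mallein provides the sharp exponent. Combining with the high-probability bound on $|\{i: \tau_i \leq L^3\}|$ completes the proof.
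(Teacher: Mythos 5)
Your proof is correct and follows essentially the same route as the paper: a union bound over the (at most $L^4$, with high probability) Poisson atoms in $[0,L^3]$ combined with Mallein's right-tail bound \eqref{eqn:mallein}, sensibly supplemented — since \eqref{eqn:mallein} requires $z\le s^{1/2}$ — by the many-to-one/Gaussian-tail argument for small $\tau_i$, which is exactly the paper's crude bound in \cref{lem:crude-right-tail}. Two harmless nits: for $d\le 3$ the logarithmic shift makes $z$ slightly exceed $(\log L)^2$, so at the boundary $\tau_i=(\log L)^4$ the constraint $z\le\tau_i^{1/2}$ can fail by an $O(\log L)$ margin (fix by enlarging the cutoff slightly), and the exponent correction coming from $\tfrac{d-4}{2}\log\tau_i$ is $O(\log L)$, not $O(\log\log L)$ — neither affects the super-polynomial smallness and hence the conclusion.
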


\begin{proof}[Proof of \cref{lem:hbar-convergence-upperbound}]
Fix $\eta \in (0,1)$ and $K \in [1,L)$.
On the event $E_{\xi}^*(K,L) \cap E^{\max}$, the following calculation holds  uniformly over 
any $s\in [0,T]$, 
any Poisson point $\tau_i \in [L^{1.4}, \tau_{TL}(\bA(\xi))]$, and 
any $v \in \cN_{\tau_i}^{i}$ such that $X_{\tau_i}(v) + A_{\tau_i}(\xi) \in (-sL,-sL+1]$:
\begin{align*}
    \|\Y_{\tau_i}(v)\|^2  
    = \|\B_{\tau_i}(v)\|^2 - |X_{\tau_i}(v)|^2
    \leq \big(\sqrt2 \tau_i+ (\log L)^2\big)^2 - \big(\sqrt{2}\tau_i-(sL - \bA_{\tau_i}(\xi))\big)^2 \nonumber\\
    = 2\sqrt2\tau_i (sL - \bA_{\tau_i}(\xi) + (\log L)^2) - (sL - \bA_{\tau_i}(\xi))^2 + (\log L)^4 \,. \nonumber 
\end{align*}
Using $\tau_i \leq KL^2$ and  $\bA_{\tau_i}(\xi) \leq (KL^2)^{1/2+\nu}$ on $E_{\xi}^{\exit}(K,L) \cap E_{\xi}^{\Bumpeq}(\nu, L)$,
the previous display implies 
\begin{multline*}
    \|\Y_{\tau_i}(v)\|^2   \leq 2\sqrt2\tau_i (sL - \bA_{\tau_i}(\xi)) + CK^{\frac12+\nu}L^{2+4\nu} \\
     \leq 2\sqrt2 \sup_{\sigma \geq 0} \sigma L^2 (sL - \bA_{\sigma L^2}(\xi)) + CK^{\frac12+\nu}L^{2+4\nu} 
     = 2\sqrt2 L^3 X_L(s)^2  + CK^{\frac12+\nu}L^{2+4\nu} \,.
\end{multline*}
for some constant $C>0$ and $L$ large.
In light of the conditions imposed on $\tau_i$ and $v\in \cN_{\tau_i}^{i}$ at the start of the proof, the right-hand side above also bounds $L^3\hs(s,\theta)^2$. Thus, for $L$ sufficiently large,
\begin{align*}
    \Pb\Big(\max_{s\in[0,T], \theta \in \S^{d-2}} (8^{-\frac{1}{4}} \hs(s,\theta) - X_L(s)) > \eta \Big) \leq \Pb(E_{\xi}^*(K,L) \cap E^{max})\,.
\end{align*}
The lemma then follows from \cref{eqn:E*-probability-one} and \cref{lem:total-upper-barrier}.
\end{proof}

\subsection{Discretization and proof of \cref{lem:hbar-convergence-lowerbound}}
Let $\cM_L \subset [0,T]$ be a mesh of equally-spaced points such that $0\in \cM_L$ and the distance between $\cM_L$ and any point in $[0,T]$ is bounded by $1/2L$. For each $s\geq 0$, let $s_*:= s_*(L)$ denote the element of $\cM_L$ immediately to the left of $s$. Similarly, let $\cM_{\ep} \subset \S^{d-2}$ be a mesh of equally-spaced points such that the distance between $\cM_{\ep}$ and any point in $\S^{d-2}$ bounded by $\ep/2$. For each $\theta \in \S^{d-2}$, 
let $\theta_*$ denote an element of $\cM_\ep$ of minimal distance to $\theta$. Observe that, for any $s\in [0,T]$ and $\theta \in \S^{d-2}$, 
\[
    (-s_*L , -s_*L +1/2] \subset (-sL, -sL+1] ~\text{ and }~ 
    \{\psi \in \S^{d-2} : \psi \cdot \theta_* \geq 1-\tfrac{\ep}{2} \}  \subset \{\psi \in \S^{d-2} : \psi \cdot \theta \geq 1-\ep \} \,.
\]
The following quantity is exactly $Z_L^{i}$, defined in \cref{def:ZLi}, except with an interval of size $1/2$ instead of size $1$ in the first indicator and a cone of angle $\ep/2$ instead of $\ep$ in the second indicator:
\[
\hat{Z}^{i}(s,\theta) := \max_{v\in \cN_{\tau_i}^{i}} \|\Y_{\tau_i}(v)\| \ind{X_{\tau_i}(v) + A_{\tau_i}(\xi) \in (-sL, -sL+1/2]} \ind{|\theta\cdot \arg(\Y_{\tau_i}(v)+\Y_{\tau_i}(\xi))|\geq 1-\ep/2} \,.
\]
Therefore, we have $\hat{Z}^{i}(s_*,\theta_*)
\leq Z_{L}^{i}(s,\theta)$, and thus
\begin{align} \label{eqn:t*-lower-bound}
    \hat{h}^{\ep}(s_*,\theta_*):=  L^{-3/2} \max_{i\in\N} \hat{Z}^{i}(s_*,\theta_*) \ind{\tau_i \in[L^{1.4}, \tau_{TL}(\bA(\xi))]} \leq \hs(s,\theta)
\end{align}
for all $s\in [0,T]$ and $\theta \in \S^{d-2}$.
With this and \cref{claim:one-point-lowerbound} below, we can prove \cref{lem:hbar-convergence-lowerbound}. 

Before stating the claim, we define the event 
\[
    E_{\xi}^{**}(K,L) := E_{\xi}^*(K,L)  \cap \{\fC_L \leq K \}\,,
\]
where we recall that $\fC_L$ is the $(1/2-\delta)$-H\"older constant of $L^{-1}\bA_{\cdot L^2}(\xi)$, for some $\delta \in (0,1/2)$ (fix it arbitrarily). 
From \cref{eqn:bound-A-holder-constant,eqn:E*-probability-one}, 
\begin{align}
    \lim_{K\to\infty} \liminf_{L\to\infty} \Pb(E_{\xi}^{**}(K,L)) = 1\,.
\end{align}

\begin{claim}\label{claim:one-point-lowerbound}
Fix any $\eta, \iota>0$. There exists $C>0$ such that for all  $k\in [0,T]$ and $\psi \in \S^{d-2}$, 
\begin{align}
    \Pb\big( X_L(k) - 8^{-\frac{1}{4}}\hat{h}^{\ep}(k, \psi) > \eta \,,\, E_{\xi}^{**}(K,L) \big) \leq e^{-C L^{2-\iota}}\,.
\end{align}
\end{claim}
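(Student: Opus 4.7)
The plan is to produce, with probability at least $1 - e^{-CL^{2-\iota}}$ on $E^{**}_\xi(K,L)$, a Poisson time $\tau_i \in [L^{1.4}, \tau_{TL}(\bA(\xi))]$ and a particle $v \in \cN^{i}_{\tau_i}$ of the (independent, standard) BBM cloud $\mathfrak{B}^i$ witnessing $L^{-3/2}\hat Z^i(k,\psi) \geq 8^{-1/4}X_L(k) - \eta$. One may assume $X_L(k) \geq \eta$, else the claim is trivial since $\hat h^\ep \geq 0$. On $E^{**}_\xi$, the function $f(\sigma) := \sigma(k - \bA_{\sigma L^2}(\xi)/L)$ is $(\tfrac12-\delta)$-H\"older on $[0,K]$ with a constant depending only on $k$ and $K$ (because $L^{-1}\bA_{\cdot L^2}(\xi)$ has H\"older constant $\leq \fC_L \leq K$). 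Fix an approximate maximizer $\sigma^* \in [0,K]$ of $f$ with $f(\sigma^*) \geq X_L(k)^2 - L^{-1}$; the hypothesis $X_L(k) \geq \eta$ forces $\sigma^* \geq c(\eta,T) > 0$, and H\"older continuity yields a window $W := [\sigma^* L^2 - L^{2-\iota},\, \sigma^* L^2] \subset [L^{1.4}, \tau_{TL}(\bA(\xi))]$ on which $f(\sigma) \geq f(\sigma^*) - o_L(1)$ for $\sigma \in L^{-2}W$.

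Throughout $W$, the spine bound $\bA_t(\xi) \geq t^{1/2-\nu}$ and \cref{eqn:mallein} give that the Poisson intensity $2\P_{A_t}(M_t<0)$ is at least $3/2$; a standard Chernoff bound for Poisson counts then shows that the number $N$ of Poisson points in $W$ satisfies $N \geq L^{2-\iota}$ with probability at least $1 - e^{-cL^{2-\iota}}$. Crucially, conditional on the spine and on $\pi$, the clouds $\{\mathfrak B^i\}_{\tau_i \in W}$ are mutually independent. For each $\tau_i \in W$ I will aim for $\B_{\tau_i}(v)$ to land within distance $\tfrac14$ of the target
$
P_i := (m_i,\, y^0_i\, \psi) \in \R \times \R^{d-1},
$
where $m_i := \sqrt2\tau_i + \bA_{\tau_i}(\xi) - kL + \tfrac14$ is the centre of the desired $X$-window, and $y^0_i := ((\sqrt2\tau_i - c_0\log\tau_i)^2 - m_i^2)^{1/2}$ with $c_0 > (d+2)/(2\sqrt2)$. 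By construction $\|P_i\| = \sqrt2\tau_i - c_0\log\tau_i$, placing $P_i$ strictly inside the G\"artner ball of $\mathfrak{B}^i$ at time $\tau_i$. G\"artner's theorem (in the form of \cref{prop:gartners-result}/\cref{cor:gartner}) then supplies a constant $p_0 > 0$ such that, with probability at least $p_0$, some $v \in \cN_{\tau_i}$ satisfies $\|\B_{\tau_i}(v) - P_i\| \leq \tfrac14$.

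On this event, the $X$-window condition is automatic; the angular condition $\lvert\psi\cdot\arg(\Y_{\tau_i}(v)+\Y_{\tau_i}(\xi))\rvert \geq 1-\ep/2$ holds because $\|\Y_{\tau_i}(\xi)\| \leq \tau_i^{1/2+\nu} \ll y^0_i$ for $L$ large; and the Pythagorean identity $\|\Y_{\tau_i}(v)\|^2 = \|\B_{\tau_i}(v)\|^2 - X_{\tau_i}(v)^2$ yields $\|\Y_{\tau_i}(v)\| \geq y^0_i - \tfrac14 \geq 8^{1/4}L^{3/2}\sqrt{f(\sigma_i)} - O_K(L^{1/2}\log L)$ with $\sigma_i := \tau_i/L^2 \in L^{-2}W$, hence $8^{-1/4}L^{-3/2}\|\Y_{\tau_i}(v)\| \geq X_L(k) - \eta/2$ for $L$ large. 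By the conditional independence of the clouds, the probability that \emph{every} Poisson point in $W$ fails is at most $(1-p_0)^N \leq e^{-p_0 L^{2-\iota}} \leq e^{-CL^{2-\iota}}$ for appropriate $C > 0$, yielding the claim. The main technical obstacle will be the Pythagorean bookkeeping certifying that the log-scale slack $c_0\log\tau_i$ in $\|P_i\|$ costs only a relative factor $O(\log L/L)$ in $y^0_i$ compared with the ``ideal'' $y^*_{\tau_i} := ((\sqrt2\tau_i)^2 - m_i^2)^{1/2}$---harmless at the scale $L^{3/2}$, but requiring care to ensure the target stays inside the G\"artner ball with the prescribed $\log\tau$ margin while still matching $8^{1/4}X_L(k)$ to leading order.
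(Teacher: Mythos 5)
Your overall strategy is the paper's: locate a near-maximizer $\sigma^*$ of $\sigma\mapsto\sigma(k-\bA_{\sigma L^2}(\xi)/L)$, take a window of length $L^{2-\iota}$ adjacent to $\sigma^*L^2$ inside $[L^{1.4},\tau_{TL}(\bA(\xi))]$, aim each cloud born in that window at a deterministic target whose $\mathbf e_1$-coordinate sits in the required strip and whose transversal direction is $\psi$, give each cloud a uniform success probability via G\"artner, exploit the conditional independence of the clouds together with a lower bound on the Poisson count to get the $e^{-CL^{2-\iota}}$ failure bound, and finish with Pythagoras plus the H\"older continuity of $L^{-1}\bA_{\cdot L^2}(\xi)$. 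This is the same architecture as the paper's proof (via \cref{cor:gartner} and \cref{claim:perc-lb}); your treatment of the Poisson count---lower-bounding the intensity $2\P_{A_t}(M_t<0)$ by $3/2$ on the window using the spine barrier and \eqref{eqn:mallein}, then Chernoff---is if anything more careful than the corresponding step in the proof of \cref{cor:gartner}.

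The one step that does not go through as written is the application of G\"artner's theorem to your target $P_i$. You place $P_i$ at radius $\|P_i\|=\sqrt2\tau_i-c_0\log\tau_i$ with $c_0>\frac{d+2}{2\sqrt2}$, i.e.\ at distance $(c_0-\frac{d+2}{2\sqrt2})\log\tau_i\asymp\log L$ \emph{inside} the front $m^G_{\tau_i}(d)$. But \cref{prop:gartners-result} only controls $u(\tau_i,x)$ for $\|x\|\in[m^G_{\tau_i}(d)-r_{\fc},\,m^G_{\tau_i}(d)+r_{\fc}]$ with $r_{\fc}$ a \emph{fixed} constant; your target exits this annulus as $L\to\infty$, so the quoted result does not supply your $p_0$. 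Morally $u(t,\cdot)$ should only be larger deeper inside the ball, but that radial monotonicity (or the interior asymptotics of \cite{Roq19}) is precisely the extra input the paper avoids by putting the target exactly on the sphere of radius $m^G_{\tau_i}(d)-r_{1/4}$, as in \eqref{def:z}. Nothing is lost by doing so: the constant margin $r_{1/4}$ together with the $\frac{d+2}{2\sqrt2}\log\tau_i$ already built into $m^G_{\tau_i}(d)$ is still $O(\log L)$ and hence absorbed into the $o(L^{3/2})$ error in the Pythagorean computation. Replace your $c_0\log\tau_i$ margin by the constant $r_{1/4}$ and the rest of your argument (the strip condition, the angular condition via $\|\Y_{\tau_i}(\xi)\|\le\tau_i^{1/2+\nu}\ll y^0_i$, and the conversion $y^0_i=8^{1/4}L^{3/2}\sqrt{f(\sigma_i)}+o(L^{3/2})$) goes through exactly as in \cref{claim:perc-lb} and \eqref{eqn:good-angle-location}.
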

\cref{claim:one-point-lowerbound} is proved in \cref{subsec:pf-claim}, thereby completing the proof of \cref{thm:extremal-front}.

\begin{proof}[Proof of \cref{lem:hbar-convergence-lowerbound}]
Fix $\eta >0$. 
For any $s\geq 0$ and $\theta \in \S^{d-2}$, \cref{eqn:t*-lower-bound} gives us
\begin{align*}
    X_L(s) - 8^{-\frac{1}{4}}\hs(s,\theta)
    &\leq 
    X_L(s) - 8^{-\frac{1}{4}}\hat{h}^{\ep}(s_*,\theta_*) 
    \leq X_L(s_*) - 8^{-\frac{1}{4}}\hat{h}^{\ep}(s_*,\theta_*) 
    +|X_L(s) - X_L(s_*)|\,.
\end{align*}
Taking a supremum over $s \in [0,T]$ and $\theta \in \S^{d-2}$, we find
\begin{multline*}
    \sup_{s\in[0,T],\theta \in \S^{d-2}} \! X_L(s) - 8^{-\frac{1}{4}}\hs(s,\theta) 
    \leq \sup_{s\in [0,T]} |X_L(s) - X_L(s_*)| + 
    \sup_{k \in \cM_L, \psi \in \cM_{\ep}} \! X_L(k) - 8^{-\frac{1}{4}} \hat{h}^{\ep}(k,\psi).
\end{multline*}
We now employ a union bound:
\begin{multline*}
    \Pb\Big(\sup_{s\in[0,T],\theta \in \S^{d-2}} \! X_L(s) - 8^{-\frac{1}{4}}\hs(s,\theta)> \eta\Big)  
    \ \leq \ \Pb\Big(\sup_{s\in [0,T]} |X_L(s) - X_L(s_*)| > \eta/2\Big)  \\
    \ + \ \Pb(E_{\xi}^{**}(K,L)^c)
    \ + \sum_{k\in \cM_L, \psi \in \cM_\ep} \Pb\Big(X_L(k) - 8^{-\frac{1}4} \hat{h}^{\ep}(k,\psi) >\eta/2 \,, E_{\xi}^*(K,L)\Big)\,.
\end{multline*}
\cref{prop:X_L-rho-convergence} and the continuity of $\rho(\cdot)$ implies that the first term converges to $0$ as $L\to \infty$. The second term converges to $0$ by \cref{eqn:E*-probability-one}. The third term converges to $0$ by \cref{claim:one-point-lowerbound}. 
\end{proof}

\subsection{KPP Equation Input}
Consider the following F-KPP equation in $\R^{d}$:
\begin{equation}
\begin{cases}
    \partial_t u = \frac{1}2 \Delta u  + u(1-u) &\text{ in } \R^{d} \\
    u(0, x) = \indset{\norm{x} \leq 1/4} &\text{ for } x\in \R^{d}
\end{cases}\,.
\label{eqn:f-kpp}
\end{equation}
The seminal result of Skorohod \cite{Skorohod}, see also \cite{INW,mckean}, gives a representation for the (unique) solution of the above F-KPP equation in terms of  $d$-dimensional BBM. We give here McKean's version:
\begin{proposition}[McKean]\label{prop:mckean}
Equation~\eqref{eqn:f-kpp} has the unique solution
\[
    u(t,x) := \E_{x}\Big[1- \prod_{v \in \cN_t} (1-\indset{\norm{\B_t(v)}\leq 1/4}) \Big] 
    =  \P\Big(\exists v \in \cN_t: \norm{\B_t(v)-x}\leq 1/4 \Big) \,, \quad \forall x \in \Rd\,, 
    ~\forall t \geq 0\,.
\]
\end{proposition}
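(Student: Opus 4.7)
\textbf{Proof plan for Proposition \ref{prop:mckean}.}

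The plan, following the classical approach of McKean, is to derive an integral (mild) equation for $u$ by conditioning on the first branching time of the root particle, verify that this integral equation is the Duhamel formulation of the stated F-KPP equation, and then invoke a uniqueness argument exploiting the Lipschitz nature of the nonlinearity on $[0,1]$.

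First I would note that the two expressions for $u$ in the statement agree: the identity $1 - \prod_v (1 - \mathbf{1}_{A_v}) = \mathbf{1}_{\bigcup_v A_v}$ rewrites the expectation as $u(t,x) = \P_x(\exists v \in \cN_t : \|\B_t(v)\|\leq 1/4)$, and translation invariance of BBM together with $\|z\|=\|-z\|$ identifies this with $\P_0(\exists v \in \cN_t : \|\B_t(v)-x\|\leq 1/4)$. For the PDE, let $\tau \sim \mathrm{Exp}(1)$ be the first splitting time of the root, independent of its Brownian trajectory $B$. On $\{\tau > t\}$ (probability $e^{-t}$) only a single Brownian motion survives; on $\{\tau = s \leq t\}$ the branching property implies two independent BBMs emanate from $B_s$ for duration $t-s$, so the conditional success probability equals $1-(1-u(t-s,B_s))^2 = 2u(t-s,B_s) - u(t-s,B_s)^2$. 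Summing and writing $P_t$ for the Brownian semigroup yields
\begin{equation*}
u(t,x) = e^{-t}\bigl(P_t \mathbf{1}_{B(0,1/4)}\bigr)(x) + \int_0^t e^{-s}\bigl(P_s[2u(t-s,\cdot) - u(t-s,\cdot)^2]\bigr)(x)\, ds.
\end{equation*}
Multiplying by $e^t$, substituting $r = t-s$ in the integral, and differentiating in $t$ using $\partial_t P_t = \tfrac12 \Delta P_t$ one extracts, after cancellation, $\partial_t u = \tfrac12 \Delta u + u(1-u)$, with initial datum $\mathbf{1}_{\|x\|\leq 1/4}$.

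For uniqueness, the probabilistic representation already gives $0 \leq u \leq 1$, and the nonlinearity $u \mapsto u(1-u)$ is Lipschitz on $[0,1]$; applying Gronwall's inequality to the difference of two such candidate solutions in the mild formulation (equivalently, a maximum principle argument) forces them to coincide. The main technical subtlety is the roughness of the initial datum $\mathbf{1}_{B(0,1/4)}$, which lies only in $L^\infty$, so the PDE must be interpreted in the mild (Duhamel) sense. However, the heat semigroup regularizes instantly, so $u(t,\cdot)$ is smooth for every $t > 0$ and the differentiation step above is fully rigorous; the initial condition is attained in $L^1_{\mathrm{loc}}$.
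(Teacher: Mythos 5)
Your proposal is correct: it is the standard McKean/Skorokhod derivation — conditioning on the first branching time to obtain the renewal (mild/Duhamel) equation, differentiating to recover $\partial_t u = \tfrac12\Delta u + u(1-u)$, and using the bound $0\le u\le 1$ with the Lipschitz nonlinearity and Gronwall for uniqueness. The paper does not prove this proposition itself but simply cites the classical references (Skorokhod, Ikeda--Nagasawa--Watanabe, McKean), so your write-up supplies exactly the argument the paper defers to, including the correct handling of the rough indicator initial datum via the mild formulation.
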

The following result comes from an important paper of G\"artner \cite{gartner}, who identified the location of the ``front'' for a more general family of PDEs, known as \emph{KPP equations}. For the PDE~\eqref{eqn:f-kpp}, the front is located at radial distance 
\begin{align}
    m_t^G(d) := \sqrt{2}t - \frac{d+2}{2\sqrt{2}}\log t \,. \label{def:gartner-shift}
\end{align} 
\begin{proposition}[G\"artner] \label{prop:gartners-result}
    For each $\fc\in (0,1/2)$, there exists some $r:= r_{\fc} >0$ such that for all $t\geq 1$,  and for all $x \in \R^d$ satisfying 
    \begin{equation}
\label{eq-range}
        \| x \| \in \big[m_t^G(d) - r,m_t^G(d)+r\big] \,,
    \end{equation}
    we have
    \begin{align}
        u(t,x) = \P\Big(\exists v \in \cN_t: \norm{\B_t(v)-x}\leq 1/4 \Big) \in [\fc,1-\fc]\,. \label{eqn:gartner}
    \end{align}
\end{proposition}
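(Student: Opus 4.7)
The plan is to invoke the classical theorem of G\"artner \cite{gartner} directly; here I sketch the underlying argument. By rotational invariance of both the initial condition and the Brownian motion, $u(t,x)$ depends only on $r := \|x\|$, and satisfies the radial F-KPP equation $\partial_t u = \tfrac{1}{2}\partial_r^2 u + \tfrac{d-1}{2r}\partial_r u + u(1-u)$ on $r > 0$, with $u(0, r) = \indset{r \leq 1/4}$. A standard maximum-principle argument, using that the initial data is nonincreasing in $r$, shows that $u(t, \cdot)$ is monotone decreasing in $r$ for every $t > 0$; hence the transition region $\{u \in [\fc, 1-\fc]\}$ is a single radial interval whose location is what we must identify.

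To locate it, the strategy is to compare $u$ with the 1D critical-speed F-KPP traveling wave $\phi$ (the unique monotone solution of $\tfrac{1}{2}\phi'' + \sqrt{2}\phi' + \phi(1-\phi) = 0$ with $\phi(-\infty) = 1$ and $\phi(+\infty) = 0$). For appropriate constants $C_1 = C_1(\fc)$ and $C_2 = C_2(\fc)$, the functions $\bar u(t, r) := \phi(r - m_t^G(d) + C_1)$ and $\underline u(t, r) := \phi(r - m_t^G(d) - C_2)$ can be verified to be, respectively, a super-solution and a sub-solution of the radial equation for all $t$ sufficiently large. The key computation is that the $-\tfrac{3}{2\sqrt{2}}\log t$ Bramson shift absorbs the time-derivative contribution of the 1D part, while the extra radial drift $\tfrac{d-1}{2r}\partial_r \phi < 0$ produces an additional logarithmic contribution of $-\tfrac{d-1}{2\sqrt{2}}\log t$; summing these yields exactly $-\tfrac{d+2}{2\sqrt{2}}\log t$, matching the shift in $m_t^G(d)$.

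By the parabolic maximum principle, $\underline u \leq u \leq \bar u$ on the relevant space-time region, and by monotonicity of $\phi$, evaluating at $r \in [m_t^G(d) - r_\fc,\, m_t^G(d) + r_\fc]$ produces $u(t, r) \in [\phi(r_\fc - C_2),\, \phi(-r_\fc + C_1)]$, which is contained in $[\fc, 1-\fc]$ once $r_\fc$ is chosen large enough in terms of $C_1, C_2, \fc$ using that $\phi(\mp\infty) \in \{1, 0\}$.

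The main obstacle, and the technical heart of G\"artner's work, is verifying the precise sub/super-solution inequalities with the correct logarithmic shift: the first-order radial drift $\tfrac{d-1}{2r}\partial_r$ degenerates at $r = 0$ but behaves like a decaying perturbation for $r \asymp \sqrt{2}t$, and its contribution must be tracked at the sharp $\log t$ scale. Since this computation is classical and well-documented in \cite{gartner} (and the required 1D input is \cite{bramson83}), the cleanest route for our purposes is to cite G\"artner's theorem rather than reproduce the PDE argument.
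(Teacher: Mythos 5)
Your proposal ultimately does exactly what the paper does: it cites G\"artner's theorem (Theorem~4.1 and Remark~4.1 of \cite{gartner}, applied with $f(u)=u(1-u)$ and radially symmetric indicator initial data, so that $v^*=\sqrt2$) rather than reproving it, and your heuristic sketch of the sub/super-solution mechanism---in particular the bookkeeping $-\tfrac{3}{2\sqrt2}\log t - \tfrac{d-1}{2\sqrt2}\log t = -\tfrac{d+2}{2\sqrt2}\log t$---is consistent with the source. The only thing the paper adds that you should too is an explicit check that the hypotheses of G\"artner's theorem (his Equations~4.1, 4.18, 4.19) are satisfied for this choice of $f$ and $g$.
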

See  \cite{Roq19} for asymptotics of $u(t,x)$ for $x$ outside the range \eqref{eq-range}, which we do not need.
    \begin{proof}
        This follows from \cite[Theorem~4.1 and Remark~4.1]{gartner} applied with $f(u)=u(1-u)$ and  $g(x) = \bar{g}(\norm{x})=\indset{\norm{x}\leq 1/4}$ (so that  $v^*=\sqrt{2}$, and the assumptions in Equations~4.1,~4.18, and~4.19 of \cite{gartner} are easily seen to be satisfied).
    \end{proof}

We fix $\fc = 1/4$, and define $r:= r_{1/4}$ as in \cref{prop:gartners-result}. 
For any $\tau,s\geq 0$ and $\theta \in \S^{d-2}$,
let $\mathbf{z}_{\tau}(s,\theta) := (\mathbf{z}_{\tau}^{(1)}(s,\theta), \mathbf{z}_{\tau}^{(2)}(s,\theta)) \in \R \times \R^{d-1}$ denote the unique point in $\R^d$ satisfying 
\begin{align}
\begin{cases}
    \mathbf{z}^{(1)}_{\tau}(s,\theta) + A_{\tau}(\xi)=  - sL +1/4 \\
    \arg(\mathbf{z}^{(2)}_{\tau}(s,\theta)) = \theta 
    \\
    \norm{\mathbf{z}_{\tau}(s,\theta)} =m_{\tau}^G(d) - r_{1/4}
\end{cases}
\label{def:z}
\end{align} 
(compare with the conditions in~\eqref{eqn:good-particle-conditions}).
For $\mathbf{x}\in \R^d$ and $a>0$, let $B(\mathbf{x}, a)$ denote the ball in $\R^d$ of radius $a$ centered at $\mathbf{x}$.
\cref{cor:gartner} below states that, with very high probability, there exists some branching time $\tau_i \in \pi$ such that the associated BBM point cloud places a particle in $B(\mathbf{z}_{\tau_i}(s,\theta),\fc)$, for any $s \in[0,T]$ and  $\theta\in \S^{d-2}$. Moreover, we can localize this $\tau_i$ to lie in a specified interval.
\begin{corollary}\label{cor:gartner}
For any $s\in[0,T]$, $\theta \in \S^{d-2}$, and any nonempty interval $I \subset [1,\infty)$, we have
\[
    \Pb\Big(\exists \tau_i \in I \,,\, \exists v\in \cN_{\tau_i}^{i} : \mathbf{B}_{\tau_i}(v) \in B(\mathbf{z}_{\tau_i}(s,\theta),1/4) \Big) \geq 1- e^{-|I|/2} \,.
\]
\begin{proof}[Proof of \cref{cor:gartner}]
The probability in question may be expressed as
\begin{multline*}
    1- \Pb\Big(\forall \tau_i \in I \,,\, \forall v\in \cN_{\tau_i}^{i} \,:\, \norm{\mathbf{B}_{\tau_i}(v) - \mathbf{z}_{\tau_i}(s,\theta)} > 1/4 \Big) \\
    = 1- \E^{(b)}\bigg[ \prod_{\tau_i \in \pi}  \P\big(\forall v\in \cN_{\tau_i}^{i}:\norm{\mathbf{B}_{\tau_i}(v) - \mathbf{z}_{\tau_i}(s,\theta)} > 1/4 \given \tau_i \big) \ind{\tau_i \in I}\bigg] \,.
\end{multline*}
Since we consider $\tau_i \geq 1$, 
we may apply \cref{prop:gartners-result} to bound  the conditional probability term in the last line from above by $1-\fc =3/4$. The probability in question is then bounded  above by
\[
    1- \E^{(b)}[ (1-\fc)^{\#\{i \in \N \,:\, \tau_i \in I\}} ]\,.
\]
Let $\mathcal{P}$ denote a Poisson random variable with parameter $2|I|$. Using the fact that the intensity of $\pi$ is bounded by $2$, we find that the previous display is bounded by 
$
    1- \E^{(b)}[(1-\fc)^{\mathcal{P}}] = 1- e^{-2\fc|I|}\,,
$
where we used the formula for the moment generating function of a Poisson random variable.
\end{proof}
\end{corollary}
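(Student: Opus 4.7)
The plan is to condition on the spine $\xi$ (which jointly determines the Poisson process $\pi$ of branching times and all target points $\mathbf{z}_{\tau_i}(s,\theta)$ via~\eqref{def:z}) and then exploit the mutual independence of the BBM point clouds $\{\cN^i\}_{i\in\N}$ attached to distinct Poisson atoms. For each fixed $\tau_i \in I$, the cloud $\cN^i$ is a standard $d$-dimensional BBM that is independent of $\xi$, so by McKean's formula (\cref{prop:mckean}), the event $\{\exists v \in \cN^i_{\tau_i}: \B_{\tau_i}(v) \in B(\mathbf{z}_{\tau_i}(s,\theta), 1/4)\}$ has conditional probability equal to $u(\tau_i, \mathbf{z}_{\tau_i}(s,\theta))$ for the F-KPP solution~\eqref{eqn:f-kpp}. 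The crucial observation is that the third line of~\eqref{def:z} forces $\|\mathbf{z}_{\tau_i}(s,\theta)\| = m_{\tau_i}^G(d) - r_{1/4}$, which lies in the G\"artner window~\eqref{eq-range}; combined with $\tau_i \geq 1$ (by hypothesis on $I$), \cref{prop:gartners-result} yields the uniform lower bound $u(\tau_i, \mathbf{z}_{\tau_i}(s,\theta)) \geq \fc = 1/4$.

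Set $N := \#\{i : \tau_i \in I\}$. Conditional independence of the clouds given $\xi$ then gives $\Pb(\text{no cloud succeeds}\mid \xi) \leq (1-\fc)^N = (3/4)^N$. Since the intensity of $\pi$ is bounded above by $2$, the conditional law of $N$ given $\xi$ is stochastically dominated by a $\mathrm{Poisson}(2|I|)$ variable, and the Poisson moment generating function yields $\E[(3/4)^N \mid \xi] \leq \exp(-2|I|(1-\fc)) = e^{-|I|/2}$. Taking unconditional expectations and subtracting from $1$ gives the claimed lower bound, uniformly in $s$, $\theta$, and $I$.

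I do not anticipate any real obstacle: the corollary is essentially Poisson thinning layered on top of G\"artner's theorem, and the only nontrivial check --- that the target points fall inside the G\"artner range --- has been engineered into the definition of $\mathbf{z}_{\tau_i}$ through the choice $r = r_{1/4}$. The one place where a subtle issue could in principle arise is the conditional independence of $\{\cN^i\}_{i\in\N}$ from $\xi$ (and from each other), but this is built directly into the construction on the global probability space described in~\cref{subsec:A-pi} and at the start of~\cref{sec:simplifications}, where the clouds are introduced as fresh, independent BBMs indexed by the atoms of $\pi$.
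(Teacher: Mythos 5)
Your proposal follows the paper's proof essentially step for step: condition on the spine, use McKean's representation (\cref{prop:mckean}) together with \cref{prop:gartners-result} to bound each cloud's failure probability by $1-\fc=3/4$ (the choice of radius in \eqref{def:z} puts $\mathbf z_{\tau_i}(s,\theta)$ inside the window \eqref{eq-range}), and then integrate over the number $N:=\#\{i:\tau_i\in I\}$ of atoms of $\pi$ in $I$. The gap is in the last step, which as written does not follow. You need an \emph{upper} bound on $\E^{(b)}[(3/4)^{N}]$; since $n\mapsto(3/4)^{n}$ is decreasing, stochastic domination of $N$ from above by $\mathcal P\sim\mathrm{Poisson}(2|I|)$ yields $\E^{(b)}[(3/4)^{N}]\geq\E^{(b)}[(3/4)^{\mathcal P}]$, i.e.\ the inequality in the wrong direction. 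What the argument actually requires is that $N$ stochastically dominate a Poisson variable of rate proportional to $|I|$, which is a \emph{lower} bound on the intensity of $\pi$ over $I\subset[1,\infty)$. Such a bound holds: the intensity $2\,\P_{A_t}(M_t<0)$ is bounded below by a positive constant (depending on $b$) uniformly over $t\geq1$, which is exactly the "bounded away from $0$'' half of the remark at the end of \cref{subsec:A-pi}. With intensity at least $2c_0>0$ on $I$ one gets $\E^{(b)}[(3/4)^{N}]\leq e^{-2\fc c_0|I|}$, which changes the constant in the exponent of the stated bound but is all that the downstream application (\cref{claim:one-point-lowerbound}) needs. There is also an arithmetic slip at the same point: for $\mathcal P\sim\mathrm{Poisson}(\lambda)$ one has $\E[z^{\mathcal P}]=e^{\lambda(z-1)}$, so $\E[(1-\fc)^{\mathcal P}]=e^{-2\fc|I|}=e^{-|I|/2}$, whereas your expression $\exp(-2|I|(1-\fc))$ equals $e^{-3|I|/2}$.

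In fairness, the paper's own proof invokes the upper bound ("the intensity of $\pi$ is bounded by $2$'') at exactly the same place, so you have reproduced the published argument faithfully, flaw included; but the step is invalid as stated and should be repaired by substituting the lower bound on the intensity as above.
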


\subsection{Proof of \cref{claim:one-point-lowerbound}}
\label{subsec:pf-claim}
Fix $k \in [0,T]$ and $\psi \in \S^{d-2}$. 
Define 
\[
    \sigma_k^* := \mathrm{argmax}_{\sigma \geq 0}~\sigma \Big(k -  \frac{\bA_{\sigma L^2}(\xi)}{L}\Big)\,.
\]
 Note that  
$
k- L^{-1}\bA_{\sigma L^2}(\xi) < 0
$
for all $\sigma \geq \tau_{TL}(\bA(\xi))/L^2$. Thus, 
\[
    \sigma_k^* \in [0, \tau_{TL}(\bA(\xi))/L^2] ~\text{a.s.}
\] 
In particular, on $E_{\xi}^{\exit}(K,TL)$ (a subset of $E_{\xi}^{**}(K,L)$ and defined in \eqref{def:E-exit}), we have $\sigma_k^* \leq K$ a.s.
We may then assume 
\begin{align} \label{eqn:bound-A-from-k}
    L^{-1} A_{\sigma_k^*L^2} \leq k - \frac{\eta}{K}\,,
\end{align}
as otherwise, $X_L(k) <\eta$ on $\{\sigma_k^* \leq K\} \cap E_{\xi}^{**}(K,L)$, and so the claim becomes trivial.

Recall $\mathbf{z}_{\tau_i}(s,\theta)$ from \eqref{def:z}. For an interval $I \subset [1,\infty)$, define the event 
\begin{align} \label{def:percolation-event}
    E(k,\psi, I) := \{\exists \tau_i \in I \,,\, \exists v \in \cN_{\tau_i}^{i} : \mathbf{B}_{\tau_i}(v) \in B(\mathbf{z}_{\tau_i}(k,\psi),\tfrac14)\}\,.
\end{align}
\begin{claim}\label{claim:perc-lb}
Consider any $I\subset [1, \tau_{TL}(\bA(\xi))]$. On  
$E_{\xi}^{**}(K,L) \cap E(k,\psi, I)$, there exists a constant $C>0$ such that for all $L\geq K \geq 1$, 
we have 
\begin{align}
    \|\Y_{\tau_i}(v)\|^2 \geq 2\sqrt2\tau_i(kL- \bA_{\tau_i}(\xi))- CK^{\frac{1}2+\nu} L^{2+4\nu}\,, \text{ for $\tau_i$ and $v$ as in~\eqref{def:percolation-event}.}
\end{align}
\begin{proof}
From the Pythagorean theorem and the definitions of $\bA(\xi)$ \eqref{eqn:conditioned-law-A} and $m_t^G(d)$ \eqref{def:gartner-shift}, we have 
\begin{multline*}
    \|\Y_{\tau_i}(v)\|^2 \geq (m_{\tau_i}^G(d)- r_{1/4})^2 - (\sqrt2\tau_i - (kL - \bA_{\tau_i}(\xi)))^2 \\
    = \Big(2\sqrt2\tau_i - \tfrac{d+2}{2\sqrt2}\log \tau_i - r_{1/4} -(kL-\bA_{\tau_i}(\xi))\Big)\Big(\big(kL- \bA_{\tau_i}(\xi)\big) - \tfrac{d+2}{2\sqrt2}\log \tau_i - r_{1/4} \Big)\,.
\end{multline*}
Finish with $\tau_i \leq \tau_{TL}(\bA(\xi)) \leq KL^2$ and $\bA_{\tau_i}(\xi) \in [0, (KL^2)^{1/2+\nu}]$ on $E_{\xi}^{*}(K,L)$, defined in~\eqref{def:E*}.
\end{proof}
\end{claim}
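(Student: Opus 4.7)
The plan is essentially bookkeeping: expand $\|\Y_{\tau_i}(v)\|^2$ via the Pythagorean identity, substitute the explicit geometric description of $\mathbf{z}_{\tau_i}(k,\psi)$ from \eqref{def:z}, factor as a difference of squares, and verify that every error term is dominated by the allowed slack $CK^{\frac12+\nu}L^{2+4\nu}$ under the quantitative control provided by $E_\xi^{**}(K,L)$.

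First I would write $\|\Y_{\tau_i}(v)\|^2 = \|\mathbf{B}_{\tau_i}(v)\|^2 - X_{\tau_i}(v)^2$. On $E(k,\psi,I)$ the particle $v$ lies in $B(\mathbf{z}_{\tau_i}(k,\psi), 1/4)$, so the triangle inequality gives
\[
\|\mathbf{B}_{\tau_i}(v)\| \geq \|\mathbf{z}_{\tau_i}(k,\psi)\| - \tfrac14 = m_{\tau_i}^G(d) - r_{1/4} - \tfrac14,
\qquad |X_{\tau_i}(v)| \leq |\mathbf{z}_{\tau_i}^{(1)}(k,\psi)| + \tfrac14.
\]
Using $\bA_s(\xi) = -A_s(\xi) - \sqrt{2}s$ together with the first defining equation of $\mathbf{z}_{\tau_i}(k,\psi)$ yields $\mathbf{z}_{\tau_i}^{(1)}(k,\psi) = \sqrt{2}\tau_i - (kL - \bA_{\tau_i}(\xi)) + \tfrac14$. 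Since $\tau_i \geq 1$ and on $E_\xi^{**}(K,L)$ one has $(kL - \bA_{\tau_i}(\xi)) = O(K^{\frac12+\nu}L^{1+2\nu}) = o(\tau_i)$ whenever $\tau_i$ is comparable to $L^2$, this first coordinate is positive for $L$ large, so the absolute value may be dropped.

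Next I would factor. Setting $\alpha := 2\sqrt{2}\tau_i$ and $\beta := kL - \bA_{\tau_i}(\xi)$, and collecting the logarithmic and constant corrections into $E_1, E_2 = O(\log \tau_i)$, one gets
\[
\|\Y_{\tau_i}(v)\|^2 \geq \bigl(m_{\tau_i}^G(d) - r_{1/4} - \tfrac14\bigr)^2 - \bigl(\tfrac12\alpha - \beta + \tfrac12\bigr)^2 = (A-B)(A+B),
\]
where $A - B = \beta - E_1$ and $A + B = \alpha - \beta - E_2$. Expanding,
\[
(A-B)(A+B) = \alpha\beta - \beta^2 - \alpha E_1 + \beta(E_1 - E_2) + E_1 E_2,
\]
so the leading term is the target $\alpha\beta = 2\sqrt{2}\tau_i(kL - \bA_{\tau_i}(\xi))$.

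Finally I would bound the four error pieces on $E_\xi^{**}(K,L)$, which supplies $\tau_i \leq \tau_{TL}(\bA(\xi)) \leq KL^2$ and $\bA_{\tau_i}(\xi) \in [0,(KL^2)^{\frac12+\nu}]$, whence $|\beta| \leq TL + K^{\frac12+\nu}L^{1+2\nu}$. Then $\beta^2, \alpha E_1, |\beta|(E_1+E_2)$ and $E_1 E_2$ are each $O(K^{1+2\nu}L^{2+4\nu})$ after absorbing logarithms into $L^{o(1)}$ factors (and for $L$ large enough, into $L^{\nu}$). Since $K \leq L$, this is in turn $\leq CK^{\frac12+\nu}L^{2+4\nu}$ for an appropriate constant $C$, giving the claimed bound. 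The main obstacle is not conceptual but purely one of bookkeeping: carefully tracking the $1/4$-ball slack, the $\tfrac{d+2}{2\sqrt{2}}\log \tau_i$ Gärtner correction and the constant $r_{1/4}$ through the expansion, and checking that each remaining contribution is absorbed into the prescribed error budget.
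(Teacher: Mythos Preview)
Your approach is essentially identical to the paper's: Pythagoras, substitute the definition of $\mathbf{z}_{\tau_i}(k,\psi)$, factor as a difference of squares, and absorb the lower-order pieces using the bounds $\tau_i\le KL^2$ and $\bA_{\tau_i}(\xi)\in[0,(KL^2)^{1/2+\nu}]$ coming from $E_\xi^{**}(K,L)$. Your extra care in tracking the $1/4$-ball slack is fine and only contributes $O(1)$ additively to $E_1,E_2$.

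There is one arithmetical slip at the very end. You correctly obtain error terms of order $O(K^{1+2\nu}L^{2+4\nu})$ (the dominant contribution being $\beta^2$ when $\beta<0$), but then write ``Since $K\le L$, this is in turn $\le CK^{\frac12+\nu}L^{2+4\nu}$.'' That inequality does not follow: $K\le L$ gives $K^{1+2\nu}\le K^{\frac12+\nu}L^{\frac12+\nu}$, which would force an extra factor of $L^{\frac12+\nu}$, not a constant. The honest conclusion of your bookkeeping is a bound of the form $CK^{1+2\nu}L^{2+4\nu}$ (or, if one also uses $\beta\le TL$ in the regime where the claim is actually applied, the sharper $CKL^{2+4\nu}$). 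Either of these is perfectly adequate for the downstream use in the proof of \cref{claim:one-point-lowerbound}, since all that is needed is that the error divided by $L^3$ tends to $0$ for fixed $K$; the specific exponent $K^{\frac12+\nu}$ in the displayed inequality appears to be a minor imprecision in the paper as well. So: your argument is correct up to this last line, and the fix is simply to state the honest exponent rather than force it to match the displayed one.
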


Fix $\iota \in (0,2)$ and choose an interval $I\subset [L^{1.4}, \tau_{TL}(\bA(\xi))]$ of length $L^{2-\iota}$ such that the distance from $\sigma_k^*L^2$ to $I$ is at most $L^{2-\iota} \vee L^{1.4}$ (this is possible since $\sigma_k^*L^2 \leq  \tau_{TL}(\bA(\xi))$, as noted at the start of the proof).
We now show that, on $E_{\xi}^{**}(K,L) \cap E(k,\psi, I)$ and for any $\tau_i\in I$ and $v\in \cN_{\tau_i}^i$ as in~\eqref{def:percolation-event}, 
\begin{align}\label{eqn:good-angle-location}
    X_{\tau_i}(v) + A_{\tau_i}(\xi) \in (-kL,-kL+1/2] \quad \text{ and } \quad
    |\psi \cdot \arg(\Y_{\tau_i}(v)+ \Y_{\tau_i}(\xi))| \ \geq 1- \ep/2\,.
\end{align}
The first condition is immediate from the definition of $\mathbf{z}_{\tau_i}(k,\psi)$. Now,
$
|L^{-2}\tau_i - \sigma_k^*| \leq 2(L^{-\iota} \vee L^{-0.6})\,.
$
Using this, \eqref{eqn:bound-A-from-k}, and $\{\fC_L \leq K\}$, we find that for all $\tau_i \in I$ and $L$ sufficiently larger than $K$,
\begin{align*}
    kL-\bA_{\tau_i}(\xi) > kL - \bA_{\sigma_k^*L^2} - \big|L^{-1}\bA_{\sigma_k^*L^2} - L^{-1}\bA_{\tau_i}(\xi) \big| \geq \tfrac{\eta}{K}L- \fC_L| L^{-2} \tau_i - \sigma_k^*|^{\frac{1}2-\delta} \geq \tfrac{\eta}{2K}L\,.
\end{align*}
Since $\tau_i \geq L^{1.4}$, \cref{claim:perc-lb} yields
$
    \|\Y_{\tau_i}(v)\|^2 \geq \tfrac{\sqrt2 \eta}{K} L^{2.4} -CL^2\log L\,. 
$
Since $\|\Y_{\tau_i}(\xi)\|\leq L^{1/2+\nu}$ on $E_{\xi}^{\Bumpeq}(\nu,L)$ (defined in \eqref{def:spine-barrier-event}) and $\arg(\mathbf{z}_{\tau_i}(k,\psi)) = \psi$, simple trigonometry yields~\eqref{eqn:good-angle-location}.

Thus, on $E_{\xi}^{**}(K,L) \cap E(k,\psi, I)$, we have shown that, for $\tau_i\in I$ and $v\in\cN_{\tau_i}^i$ as in \eqref{def:percolation-event},
\[
    8^{-\frac{1}4}\hat{h}_{L,K}^{\ep}(k,\psi) \geq 8^{-\frac{1}4}L^{-\frac32}\|\Y_{\tau_i}(v)\| 
    \geq \sqrt{\frac{\tau_i}{L^2}\Big(k- \frac{\bA_{\tau_i}(\xi)}{L^2}\Big)- 8^{-\frac{1}{2}}CK^{\frac{1}2+\nu} L^{-1+4\nu}} > X_L(k) - \eta \,.
\]
In the last inequality, we have simply used H\"older continuity once more to replace $L^{-2}\tau_i$ with $\sigma_k^*$.
Finally, a union bound over all intervals of length $L^{2-\iota}$ contained in $[L^{1.4},\tau_{TL}(\bA(\xi))]$ yields 
\begin{align*}
    \Pb\Big( X_L(k) - 8^{-\frac{1}{4}}\hat{h}_{L,K}^{\ep}(k, \psi) > \eta \,,\, E_{\xi}^{**}(K,L) \Big) \leq KL^{\iota} \max_{I}\Pb\big(E_{\xi}^{**}(K,L) \cap E(k,\psi, I)) < e^{-CL^{2-\iota}}\,,
\end{align*}
for some constant $C>0$, where in the last inequality we used \cref{cor:gartner}. 
\qed

\subsection*{Acknowledgements and Declarations}
YHK acknowledges the support of the NSF Graduate Research Fellowship 1839302. OZ was supported by Israel Science Foundation grant 421/20 and a US-Israel BSF grant.
The authors have no conflicts of interest to declare that are relevant to the content of this article.

\bibliography{front.bib}
\bibliographystyle{abbrv}

\end{document}